\def\siam{0}
\titleformat{\subsubsection}[runin]
{\normalfont\normalsize\bfseries}{\thesubsubsection}{1em}{}
\numberwithin{equation}{section}
\newcommand{\cV}{\mathcal{V}}
\newcommand{\conv}{\mathrm{conv}}
\newcommand{\cU}{\mathcal{U}}
\newcommand{\inclu}[0] {\ar@{^{(}->}}
\newcommand{\dist}{{\rm dist}}
\newcommand{\RR}{\mathbb{R}}
\newcommand{\NN}{\mathbb{N}}
\newcommand{\eps}{\varepsilon}
\newcommand{\deltanc}{\Delta_f}
\newcommand{\argmin}{\operatornamewithlimits{argmin}}
\newcommand{\mini}{\operatornamewithlimits{minimize}}
\newtheorem{thm}{Theorem}[section]
\newtheorem{theorem}{Theorem}[section]
\newtheorem{lemma}[thm]{Lemma}
\newtheorem{assumption}{Assumption}
\crefname{claim}{claim}{claims}
\Crefname{claim}{Claim}{Claims}
\crefname{lem}{lemma}{lemmas}
\Crefname{lem}{Lemma}{Lemmas}
\crefname{algorithm}{algorithm}{algorithms}
\Crefname{algorithm}{Algorithm}{Algorithms}
\theoremstyle{remark}
\newtheorem{claim}{Claim}
\DeclarePairedDelimiter{\dotp}{\langle}{\rangle}
\begin{document}

	\title{Optimal Convergence Rates for the Proximal Bundle Method}


	 \author{Mateo D\'iaz\thanks{Center
	 for Applied Mathematics, Cornell University Ithaca, NY 14850, USA;
	 \texttt{people.cam.cornell.edu/md825/}}\qquad Benjamin Grimmer\thanks{ORIE, Cornell University Ithaca, NY 14850, USA;
	 \texttt{people.orie.cornell.edu/bdg79/} \newline
 	 This material is based upon work supported by the National Science Foundation Graduate Research Fellowship under Grant No. DGE-1650441.}}

	\date{}
	\maketitle

	\begin{abstract}
              We study convergence rates of the classic proximal bundle method for a variety of nonsmooth convex optimization problems. We show that, without any modification, this algorithm adapts to converge faster in the presence of smoothness or a H\"older growth condition. Our analysis reveals that with a constant stepsize, the bundle method is adaptive, yet it exhibits suboptimal convergence rates. We overcome this shortcoming by proposing nonconstant stepsize schemes with optimal rates. These schemes use function information such as growth constants, which might be prohibitive in practice. We provide a parallelizable variant of the bundle method that can be applied without prior knowledge of function parameters while maintaining near-optimal rates. The practical impact of this scheme is limited since we incur a  (parallelizable) log factor in the complexity. These results improve on the scarce existing convergence rates and provide a unified analysis approach across problem settings and algorithmic details. Numerical experiments support our findings.
	\end{abstract}
        \section{Introduction}\label{sec:intro}
Convex optimization has played a fundamental role in recent developments in high-dimensional statistics, signal processing, and data science. Large-scale applications have motivated researchers to develop first-order methods with computationally simple iterations. Although impressive in scope,  these methods often require delicate parameter tuning involving geometrical information about the objective function. Thus, imposing an obstacle for practitioners that rarely have access to such information.

In this work, we develop efficiency guarantees for \emph{proximal bundle methods}, which date back to the 70s, that solve unconstrained convex minimization problems
\begin{equation}\label{eq:main}
\mini_{x \in \RR^d} f(x)
\end{equation}
where $f: \RR^d \rightarrow \RR$ is a proper closed convex function. Throughout we assume $f$ attains its minimum value $f^*=\inf f$ on some nonempty set $X^* = \{x \mid f(x) = f^*\}$. {Our core finding is that classic bundle methods, without any modification, are adaptive,} which means that they speed up in the presence of smoothness or error bounds, with little to no tuning.

Proximal bundle methods were originally introduced by~\cite{Lemarechal1975}, \cite{mifflin1977algorithm}, and~\cite{Wolfe1975}. They are conceptually similar to model-based methods~\cite{Davis_2019,Ochs2019,Drusvyatskiy2021}. That is, methods that update their iterates by applying a proximal step to an approximation of the function, known as the model $f_k$:
\begin{equation}\label{eq:subproblem1}
  x_{k+1}  \leftarrow \argmin_{x} f_k(x) + \frac{\rho_k}{2}\|x - x_k\|^2 \quad \text{with} \quad \rho_{k} > 0.\end{equation}
Unlike these schemes, bundle methods only update their iterates $x_k$ when the decrease in objective value is at least a fraction of the decrease that the model predicted. Moreover, bundle methods incorporate information from past iterations into their models, allowing $f_k$ to capture more than the just objective's geometry near $x_k$.

This seemingly subtle change has a rather surprising consequence: the iterates generated by a bundle method, with \emph{any} constant parameter configuration, converge to a minimizer of $f$; see~\cite[Thm. 4.9]{Kiwiel1985}, \cite[Thm. XV.3.2.4]{hiriart2013convex}, or~\cite[Thm. 7.16]{Ruszczynski2006} for different variations of this result. This stands in harsh contrast to other classic first-order algorithms; for example, gradient descent and its accelerated variants rely on selecting a stepsize inversely proportional to the level of smoothness. Similarly, subgradient methods rely on carefully controlled decreasing stepsize sequences. These simpler algorithms may fail to converge when the stepsizes are not carefully managed. Thus, providing a compelling reason to consider bundle methods.

Although bundle methods are known to converge under a number of assumptions \cite{Kiwiel1985-nonconvex,Mifflin1982,Sagastizabal2005,Apkarian2009,Warren2010,Lv2019,deOliveira2019,Monjezi2019a,Monjezi2019b, correa1993convergence} and have been successfully used in applications \cite{Sagastizabal2012,deOliveira2014}, nonasympotic guarantees have remained mostly evasive. The purpose of this paper is to close this gap. We study convergence rates for finding an $\eps$-minimizer, e.g., $f(x) - \inf f \leq \eps$, under a variety of different assumptions on $f$. We consider settings where the objective function is either $M$-Lipschitz continuous
\begin{equation} \label{eq:Lipschitz}
|f(x) - f(y)| \leq M\|x-y\| \qquad \text{for all }x, y \in \RR^d
\end{equation}
or differentiable with an $L$-Lipschitz gradient, often referred to as $L$-smoothness,
\begin{equation} \label{eq:Smooth}
\|\nabla f(x) - \nabla f(y)\| \leq L\|x-y\| \qquad \text{for all }x, y \in \RR^d\ .
\end{equation}
In either setting, we investigate the method's rate of convergence with and without the presence of H\"older growth
\begin{equation} \label{eq:holder-growth}
f(x)- f^* \geq \mu \cdot \dist(x, X^*)^p \qquad \text{for all }x \in \RR^d\ ,
\end{equation}
where $\dist(x, S) = \inf_{y \in S}\|x-y\|$. Particularly important cases are when $p=1$ and $p=2$ which correspond to sharp growth ($\mu$-SG)~\cite{Burke1993} and a generalization of strong convexity, known as quadratic growth ($\mu$-QG).

Historically, bundle methods were primarily conceived for nonsmooth optimization. The reason for this is simple: solving the proximal bundle subproblem \eqref{eq:subproblem1} is believed to be expensive, which is acceptable for nonsmooth problems because of their intrinsic higher complexity. Yet, it is not acceptable for smooth problems. Thus, studying rates for smooth setting might seem odd. Nonetheless, recently Nesterov and Florea~\cite{Nesterov2105} introduced an efficient routine to tackle the subproblems and use it to derive a model-based method with sound theoretical and practical performance. This motivates our pursuit for efficiency guarantees for smooth optimization.

\subsection{Contributions}
The main contribution of this work is to establish convergence rates under every realizable combination of continuity/smoothness~\eqref{eq:Lipschitz} or~\eqref{eq:Smooth} and growth assumptions~\eqref{eq:holder-growth}, see Table~\ref{tab:bundle-rates}. Full theorem statements are given in Section~\ref{sec:bundle} and apply for any H\"older growth exponent (rather than just the cases of $p=1$ and $p=2$ shown in the table). Our analysis technique is fairly general as we apply it to every combination of assumptions as well as different stepsize rules. We show rates for the constant stepsize rule $\rho_k = \rho > 0$, which tend to be suboptimal. Yet, they improve under additional assumptions such as quadratic growth or smoothness. Tuning the constant $\rho$ to depend on a target accuracy $\epsilon$ yields faster convergence rates. Further, we propose nonconstant stepsize rules $\rho_k$ with two clear advantages: they yield yet faster convergence and their convergence does not slow down after reaching the target accuracy.

\begin{table}[h]
	{\small
		\begin{center}
			\renewcommand{\arraystretch}{1.6}
			\begin{tabular}{|c c|c|c|c|}\hline
				\multicolumn{2}{|c|}{Assumptions} & Rate for generic $\rho$ & Rate for tuned $\rho$ & Rate for adaptive $\rho_k$ \\ \hline
				\parbox[t]{2mm}{\multirow{3}{*}{\rotatebox[origin=c]{90}{$M$-Lipschitz}}} & No Growth & $O\left(\dfrac{M^2\|x_0-x^*\|^4}{\rho\epsilon^3}\right)$ & $O\left(\dfrac{M^2\|x_0-x^*\|^2}{\epsilon^2}\right)$ & $O\left(\dfrac{M^2\|x_0-x^*\|^2}{\epsilon^2}\right)$ \\
				& $\mu$-QG & $O\left(\dfrac{M^2}{\min\{\mu,\rho\}\epsilon}\right)$ & $ O\left(\dfrac{M^2}{\mu\epsilon}\right)$ & $ O\left(\dfrac{M^2}{\mu\epsilon}\right)$\\
				& $\mu$-SG & $O\left(\dfrac{M^2}{\rho\epsilon} \right)$ & $O\left( \dfrac{M^2}{\mu^2}\sqrt{\dfrac{\deltanc}{\epsilon}} \right)$ & $O\left(\dfrac{M^2}{\mu^2} \log\left(\dfrac{\deltanc}{\epsilon}\right)\right)$ \\ \hline
				\parbox[t]{2mm}{\multirow{2}{*}{\rotatebox[origin=c]{90}{$L$-Smooth}}} & No Growth & $O\left(\dfrac{L^3\|x_0-x^*\|^2}{\rho^2\epsilon}\right)$ & $O\left(\dfrac{L\|x_0-x^*\|^2}{\epsilon}\right)$ & $O\left(\dfrac{L\|x_0-x^*\|^2}{\epsilon}\right)$\\
				& $\mu$-QG & $O\left( \dfrac{L^3}{\rho^2\mu}\log\left(\dfrac{\deltanc}{\epsilon}\right)\right)$ & $O\left(\dfrac{L}{\mu} \log\left(\dfrac{\deltanc}{\epsilon}\right)\right)$ & $O\left(\dfrac{L}{\mu} \log\left(\dfrac{\deltanc}{\epsilon}\right)\right)$\\ \hline
			\end{tabular}
		\end{center}
	}
 \caption{Convergence rates. We denote $\deltanc := f(x_0) - \inf f$. Here QG and SG denotes quadratic and sharp growth, respectively. The first column applies for any choice of the parameter $\rho$, showing progressively faster convergence as more structure is introduced. The second column shows the rate after optimizing the choice of $\rho$. The third column further improves these by allowing nonconstant stepsizes $\rho_k$, albeit it requires information about the function that might be unavailable.}  \label{tab:bundle-rates}
\end{table}

The existing convergence theory for the proximal bundle method applies to settings comparable to the first two rows of our table. Kiwiel~\cite{Kiwiel2000} derived a $O(\epsilon^{-3})$ convergence rate for Lipschitz problems, which agrees with our theory. Du and Ruszczynski~\cite{Du2017} and subsequently Liang and Monteiro~\cite{Liang2021} showed a $O(\log(1/\epsilon)/\epsilon)$ convergence rate for Lipschitz, strongly convex problems, which we improve on by removing the extra logarithmic term and thus achieve the optimal convergence rate for this setting of $O(1/\epsilon)$.
To our knowledge, the rest of our convergence results apply to wholly new settings for the proximal bundle method. In all of the $M$-Lipschitz settings considered, we show that using a nonconstant stepsize the bundle method attains the optimal nonsmooth convergence rate.
In the $L$-smooth settings considered, the bundle method converges at the same rate as gradient descent. Although, unlike gradient descent, our convergence theory applies to any configuration of its algorithmic parameters.

To complement these results we propose a simple parallelizable variant of the bundle method that avoids the reliance on tuning a stepsize or sequence of stepsizes based on potentially unrealistic knowledge of underlying problem constants. This parallel method too falls under the umbrella of our analysis. It attains near optimal nonsmooth convergence rates for Lipschitz problems with any level of H\"older growth, {at the cost of running a logarithmic number of instances of the bundle method in parallel. This additional cost may make the method impractical in settings when processor power is limited. Its primary value lies in showing a parameter-free, optimal proximal bundle method is theoretically possible.}

\subsection{Related work}
In 2000, Kiwiel~\cite{Kiwiel2000} gave the first convergence rate for the proximal bundle method, showing that an $\epsilon$-minimizer $x_k$ is found with $k\leq O\left(\frac{\|x_0-x^*\|^4}{\epsilon^3}\right).$ More recently, Du and Ruszczy\'nski~\cite{Du2017} gave the first analysis of bundle methods when applied to problems satisfying a quadratic growth bound. In this case, an $\epsilon$-minimizer is found within $O(\log(1/\epsilon)/\epsilon)$ iterations. Following this, Liang and Monteiro~\cite{Liang2021} showed a variant of the proximal bundle method with proper stepsize selection attains the optimal convergence rate for convex and strongly convex optimization, up to logarithmic terms. This work continues this direction establishing a wide range of guarantees for an unmodified proximal bundle method. Note that our work assumes the given problem is unconstrained (matching the model of~\cite{Du2017} but falling short of that of~~\cite{Kiwiel2000,Liang2021}). Generalizing our analysis beyond the unconstrained setting provides an interesting future direction.

Despite historically having weaker convergence rate guarantees than simple alternatives like the subgradient method, bundle methods have persisted as a method of choice for nonsmooth convex optimization. See~\cite{Frangioni2020,Lemarechal2001} as a survey of much of the bundle method literature. In practice, bundle methods have proven to be efficient methods for solving many nonsmooth problems (see~\cite{Sagastizabal2005,Sagastizabal2012,deOliveira2014} for further discussion). Extensions that apply to nonconvex problems have been considered in~\cite{Kiwiel1985-nonconvex,Mifflin1982,Apkarian2009,Warren2010,Lv2019,deOliveira2019,Monjezi2019a,Monjezi2019b} and as well as extensions to problems where only an inexact first-order oracle is available in~\cite{Hare2016,deOliveira2020,Lv2018}. Bundle-based methods that exploit the so called $\cV\cU$-structure of the objective function were develop in \cite{mifflin2005algorithm, mifflin2012science}; under suitable conditions these algorithms exhibit superlinear convergence.

Stronger convergence rates have been established for related level bundle methods~\cite{Lemarechal1995}, which share many core elements with proximal bundle methods. Variations of level bundle methods were studied in~\cite{Kiwiel1995, de2017target, Lan2015}. {The results of Lan~\cite{Lan2015} provide guarantees across a range of problem settings, while incorporating Nesterov-type acceleration at the cost of needing two oracle calls per iteration. For nonsmooth optimization, their level bundle method rates align with the optimal rates we derive for the proximal bundle method. For smooth optimization, they achieve the optimal accelerated rate whereas our analysis finds the proximal bundle method only converges as fast as gradient descent.}


A few parallel bundle methods have been proposed in the literature \cite{fischer2014parallel, kim2019asynchronous, iutzeler2020asynchronous}. The authors of \cite{kim2019asynchronous} propose a bundle-trust-region targeted to stochastic mixed-integer programming problems. While \cite{iutzeler2020asynchronous} introduces an asynchronous level bundle method that assumes that $f$ is a sum of convex functions and exploits this decomposition to distribute computation. To our knowledge,  \cite{fischer2014parallel} introduced the only parallel variant of the \emph{proximal} bundle method, which seeks to reduce the computational load by greedily picking subsets of coordinates and executing parallel instances of the bundle method restricted to each subset. In contrast, our parallel bundle method
  aims to bypass the need for tuning by running a small number of instances with different stepsizes.

\paragraph{Outline}
Section \ref{sec:bundle} introduces the Proximal Bundle Method and provides the formal convergence guarantees under different regularity assumptions. This section also introduces simple stepsize rules that guarantee optimal convergence rates for all nonsmooth settings. Practical implementations of these rules require access growth constants of the function. {To illustrate that nearly optimal parameter-free guarantees are possible, in Section \ref{sec:parallel-bundle} we propose an adaptive parallel bundle method.} We complement our findings with numerical experiments in Section \ref{sec:numerics}. Finally, Section \ref{sec:theory-proofs} presents a broadly applicable proof technique to analyze bundle methods and uses it to establish the theoretical results.


\section{Bundle Methods}\label{sec:bundle}
In this section, we formally define the family of proximal bundle methods that our theory applies to. We present the convergence rates for the classic method with constant stepsizes. Additionally, we introduce and analyze nonconstant stepsize rules that guarantee faster convergence rates.


Proximal bundle methods work by maintaining a model function $f_k\colon \RR^n\rightarrow \RR$ at each iteration $k$ and a current iterate $x_k$. The method computes a candidate for the next iterate as
\begin{equation}\label{eq:zDef} z_{k+1} = \argmin_{z \in \RR^{d}} f_k(z) + \frac{\rho_k}{2}\|z-x_k\|^2. \end{equation}
However, unlike other model-based algorithms, bundle methods do not necessarily move their next iterate to $z_{k+1}$. Instead, it first checks whether the candidate $z_{k+1}$ has at least $\beta\in (0,1)$ fraction of the decrease in objective value that our model $f_k(\cdot)$ predicts, i.e., $\beta(f(x_k) - f_k(z_{k+1})) \leq  f(x_k)-f(z_{k+1})$. If it does, it updates $x_{k+1}=z_{k+1}$ as the next iterate, this is called a {\it Descent Step}. Otherwise the method keeps the iterate the same $x_{k+1}=x_k$  and updates the model function $f_{k+1}$, called a {\it Null Step}.

\begin{algorithm2e}[t]
  \SetAlgoNoLine
	\KwData{$z_0=x_0 \in \RR^n$, $f_0=f(x_0)+\langle g_0, \cdot -x_0\rangle$}
	\Step{}{
	Compute candidate iterate $ \displaystyle z_{k+1} \leftarrow \argmin_{z \in \RR^{d}} f_k(z) + \frac{\rho_k}{2}\|z-x_k\|^2$\;
	\If(\tcp*[f]{Descent step}){$\beta(f(x_k) - f_k(z_{k+1})) \leq  f(x_k)-f(z_{k+1})$}{
	  Set $x_{k+1} \leftarrow z_{k+1}$\;
	}
	\uElse(\tcp*[f]{Null step}){
	  Set $x_{k+1} \leftarrow x_{k}$\;}
	  Update $f_{k+1}$ and $\rho_{k+1}$ without violating Assumption \ref{ass:main}\;
	}
	\caption{Proximal Bundle Method}
	\label{alg:bundle}
\end{algorithm2e}

The proximal bundle method is stated fully in Algorithm~\ref{alg:bundle}. Our analysis does not presume a particular parametrization or form of the models. We only assume that the models satisfy mild assumptions, typical of bundle methods in the literature. To state the assumptions, note the first-order optimality conditions define a subgradient $$s_{k+1}=\rho_k(x_{k}-z_{k+1})\in\partial f_k(z_{k+1}) \quad \text{for each }k \geq 0$$
where $\partial f(x) = \{g \mid f(x') \geq f(x) + \langle g, x'-x\rangle \ \ \forall x'\in\RR^d\}$ denotes the subdifferential of $f$ at $x$. As it is customary, we assume access to a black-box oracle that for a given point $x$ returns $f(x)$ and some subgradient $g(x) \in \partial f(x)$.
\begin{assumption}\label{ass:main}
	Let $\left\{f_k: \RR^d \rightarrow \RR\right\}$ and $\{\rho_k\}$ be the sequence of models and stepsizes used throughout the execution of a bundle method. Assume that for any iteration $k \geq 0$, the next model $f_{k+1}$ and stepsize $\rho_{k+1}$ satisfy the following:
	\begin{enumerate}
		\item \textbf{Minorant.}
		\begin{equation} \label{eq:model-lowerBound}
		f_{k+1}(x)\leq f(x) \qquad  \text{for all }x \in \RR^d \ .
		\end{equation}
		\item \textbf{Subgradient lower bound.} The oracle output $g_{k+1}:= g(z_{k+1}) \in \partial f(z_{k+1})$ satisfies
		\begin{equation} \label{eq:model-subgradf}
		f_{k+1}(x)\geq f(z_{k+1})+\langle g_{k+1}, x-z_{k+1}\rangle \qquad  \text{for all }x \in \RR^d \ .
		\end{equation}
		\item \textbf{Model subgradient lower bound.} After a null step $k$
		\begin{equation} \label{eq:model-subgradfk}
		f_{k+1}(x)\geq f_k(z_{k+1})+\langle s_{k+1}, x-z_{k+1}\rangle \qquad  \text{for all }x \in \RR^d \ .
	  \end{equation}
		\item  \textbf{Nondecreasing stepsize between null steps.} After a null step $k$,
		\begin{equation} \label{eq:parameter-change}
		\rho_{k+1}\geq\rho_k \ .
		\end{equation}
	\end{enumerate}
\end{assumption}

The first two conditions are natural as they ensure that a new model incorporates first-order information from the objective at $z_{k+1}$. The third condition is mild and, intuitively, requires the new model to retain some of the approximation accuracy of the previous model. The last assumption is trivial to enforce algorithmically.

\subsection{Bundle Method Model Function Choices}\label{sec:function_choices}

Several methods for constructing model functions $f_k$ that satisfy~\eqref{eq:model-lowerBound}-\eqref{eq:model-subgradfk} have been considered. In practice, the main consideration lies in weighing the potentially greater per iteration gains from having more complex models against the lower iteration costs from having simpler models.

\paragraph{Full-Memory Proximal Bundle Method.} The earliest proposed bundle methods~\cite{Lemarechal1975,Wolfe1975} rely on using all of the past subgradient evaluations to construct the models as
\begin{equation} \label{eq:full-memory}
f_{k+1}(x) = \max_{j=0\dots k+1} \left\{ f(z_j) + \dotp{g_j, x-z_j} \right\}.
\end{equation}
In this case, solving the quadratically regularized subproblem at each iteration amounts to solving a quadratic programming problem.

\paragraph{Finite Memory Proximal Bundle Method.} Alternatively using cut-aggregation~\cite{Kiwiel1983,Kiwiel1985}, the collection of $k+1$ lower bounds used by~\eqref{eq:full-memory} can be simplified down to just two linear lower bounds. The only two necessary lower bounds are exactly those required by \eqref{eq:model-subgradf} and \eqref{eq:model-subgradfk}. Namely, one could construct the model functions as

\begin{equation} \label{eq:finite-memory}
f_{k+1}(x) = \max\left\{ f_k(z_{k+1})+\langle s_{k+1}, x-z_{k+1}\rangle,\ f(z_{k+1}) + \dotp{g_{k+1}, x-z_{k+1}} \right\}.
\end{equation}
Then the subproblem that needs to be solved at each iteration can be done in closed form, see~ Claim~\ref{claim:closedForm}. Hence the iteration cost using this model is limited primarily by the cost of one subgradient evaluation.

\paragraph{Spectral Bundle Methods.}
Both of the above models rely on constructing piecewise linear models of the objective. For more structure problems, richer models can be constructed. For example, in eigenvalue optimization or more broadly semidefinite programming, better spectral lower bounds can be constructed instead of using simple polyhedral bounds~\cite{Helmberg2000,Oustry2000}. Primal-dual convergence rate guarantees for such spectral bundle methods were recently developed by Ding and Grimmer~\cite{Ding2020}.

\subsection{Iterations, Oracle Queries, Stopping and Inexactness}
In the sequel, we present convergence rates of the bundle method with respect to the number of iterations. Depending on the choice of the model function $f_{k}$, solving the proximal subproblem \eqref{eq:zDef} might require a numerical procedure. Our results treat this step as a black box. In turn, our statements are written in terms of the number of descent and null steps. The sum of these two gives what we call the ``iteration count'' and
ignores any inner-loop iterations needed for solving \eqref{eq:zDef}.

Although the total number inner and outer of iterations might indeed be much higher than our iteration count, our claims about the optimality of the bundle method hold; black-box complexity theory \cite{intro_lect} measures optimality in terms of the number of oracle queries. Regardless of the model function choice, the number of oracle queries matches our iteration count.

{In general, provable stopping criteria for the proximal bundle method are not readily available. One reasonable heuristic lies in stopping once the aggregate $\|s_{k+1}\|$ is small, but this provides no bound on the algorithm's suboptimality. Stopping conditions are possible given more structure (for example, a bound on the distance to optimality or when~\eqref{eq:main} corresponds to the dual of a constrained problem). Alternative approaches like the level bundle method naturally lend themselves to a stopping criteria.}

One shortcoming of our theory is that we do not account for methods where the oracle queries or the solutions of \eqref{eq:zDef} are inexact. There is a vast literature on this topic \cite{kiwiel2006proximal, solodov2003approximations, deOliveira2014, deOliveira2020}. Algorithmic variants that handle inexactness require additional checks in order to guarantee convergence. As extensions of our results to the inexact setting goes beyond the scope of this work; we leave this as an intriguing question for future research.

\subsection{Convergence Rates from Constant Stepsize Choice} \label{sec:constant-step-rates}
We now formalize our convergence theory for the proximal bundle method using any constant choice of the stepsize parameter $\rho_k=\rho$ and any $\beta\in(0,1)$.
These guarantees match those claimed in the first column of Table~\ref{tab:bundle-rates}. After each theorem, we remark on the tuned choice of $\rho$ that gives rise to the claimed rate in the second column of Table~\ref{tab:bundle-rates}. We start by considering the setting where only Lipschitz continuity is assumed. The proof of this result is deferred to Section~\ref{proof:LipschitzGeneral}.
\begin{theorem}[\textbf{Lipschitz}] \label{thm:LipschitzGeneral}
	For any $M$-Lipschitz convex objective function $f$, consider applying the bundle method using a constant stepsize $\rho_k=\rho$. Then for any $0<\epsilon \leq f(x_0)-f^*$, the number of descent steps before an $\epsilon$-minimizer is found is at most
	$$ \frac{2\rho D^2}{\beta\epsilon} + \left\lceil \frac{2\log\left(\frac{f(x_0)-f^*}{\rho D^2}\right)}{\beta} \right\rceil_+ $$
	and the number of null steps is at most
	$$  \frac{48\rho M^2D^4}{\beta(1-\beta)^2\epsilon^3} +\frac{32M^2}{\beta(1-\beta)^2\rho^2D^2} $$
	where $D^2 = \sup_k \dist(x_k,X^*)^2 < \infty$.
\end{theorem}
	It follows from \cite{Ruszczynski2006}[(7.64)] that $D^2 \leq \dist(x_0,X^*)^2 + \frac{2(1-\beta)(f(x_0)-f^*)}{\beta\rho}$. Alternatively, if the level sets of $f$ are bounded, the fact that $f(x_k)$ is non-increasing ensures $D^2 \leq \sup\{\dist(x_k,X^*)^2 \mid f(x)\leq f(x_0)\}$.

	Selecting $\rho= \epsilon/D^2$  gives an overall complexity bound of
	$ O\left(\frac{M^2D^2}{\epsilon^2}\right) $
	and matches the optimal rate for nonsmooth, Lipschitz convex optimization \cite{intro_lect}. 

If instead of Lipschitz continuity of the objective, we assume the objective has Lipschitz gradient, the bundle method adapts to give the following faster rate. We defer the proof to Section~\ref{proof:SmoothGeneral}.
\begin{theorem}[\textbf{Smooth}] \label{thm:SmoothGeneral}
	For any $L$-smooth convex objective function $f$, consider applying the bundle method using a constant stepsize $\rho_k=\rho$. Then for any $0<\epsilon \leq f(x_0)-f^*$, the number of descent steps before an $\epsilon$-minimizer is found is at most
	$$ \dfrac{2\rho D^2}{\beta\epsilon} + \left\lceil \frac{2\log\left(\frac{f(x_0)-f^*}{\rho D^2}\right)}{-\beta} \right\rceil_+ $$
	and the number of null steps is at most
	$$ \dfrac{16(L+\rho)^3}{(1-\beta)^2\rho^3}\left(\frac{2\rho D^2}{\beta\epsilon} + \left\lceil \frac{2\log\left(\frac{f(x_0)-f^*}{\rho D^2}\right)}{\beta} \right\rceil_+ +1\right) $$
	where $D^2 = \sup_k \dist(x_k,X^*)^2 < \infty$.
\end{theorem}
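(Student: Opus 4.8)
The plan is to split the analysis into two separate accounting problems — bounding descent steps and bounding null steps — and to reuse as much of the Lipschitz-case machinery (Theorem~\ref{thm:LipschitzGeneral}) as possible, since the descent-step bounds in the two theorems are essentially identical. For the \textbf{descent steps}, I would argue exactly as in the Lipschitz case. The key quantity is the decrease in objective value guaranteed at each descent step: when a descent step occurs we have $\beta(f(x_k)-f_k(z_{k+1})) \le f(x_k)-f(z_{k+1})$, so it suffices to lower-bound the model-predicted decrease $f(x_k)-f_k(z_{k+1})$. Using the minorant property \eqref{eq:model-lowerBound} together with the optimality conditions for the subproblem \eqref{eq:zDef} (which give $s_{k+1}=\rho(x_k-z_{k+1})$), I would derive a recursion on $\dist(x_k,X^*)^2$ of the standard proximal-point form, yielding a per-step decrease proportional to $\tfrac{\beta\epsilon}{\rho D^2}$ once $f(x_k)-f^*>\epsilon$, and combine this with the geometric-contraction behavior captured by the logarithmic term $\lceil \tfrac{2\log((f(x_0)-f^*)/(\rho D^2))}{-\beta}\rceil_+$. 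Since this part is shared with Theorem~\ref{thm:LipschitzGeneral}, I expect it to follow essentially verbatim from the proof in Section~\ref{proof:LipschitzGeneral}.

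The genuinely new content is the \textbf{null-step bound}, and this is where smoothness must be exploited. The structure of the claimed bound — a factor $\tfrac{16(L+\rho)^3}{(1-\beta)^2\rho^3}$ multiplying (the descent-step count $+1$) — signals the intended strategy: bound the number of \emph{consecutive} null steps between two descent steps by a constant depending only on $L,\rho,\beta$, and then multiply by the number of descent-step ``epochs.'' So I would fix a maximal run of null steps during which the iterate is frozen at some $x_k=\bar x$ and track the monotone improvement of the models $f_k(z_{k+1})$ across this run. During a null step the failure of the descent test gives $f(x_k)-f(z_{k+1}) < \beta(f(x_k)-f_k(z_{k+1}))$, i.e.\ the model gap $f(z_{k+1})-f_k(z_{k+1})$ is bounded below by a positive multiple of $f(x_k)-f_k(z_{k+1})$. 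The model-subgradient lower bound \eqref{eq:model-subgradfk} then ensures the next model strictly improves at the proximal center.

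The hard part will be quantifying this improvement and showing it cannot persist too long. The idea is to use $L$-smoothness to \emph{upper} bound the true decrease: smoothness gives $f(z_{k+1}) \le f(x_k) + \langle \nabla f(x_k), z_{k+1}-x_k\rangle + \tfrac{L}{2}\|z_{k+1}-x_k\|^2$, which, combined with $\|z_{k+1}-x_k\| = \|s_{k+1}\|/\rho$ from the optimality conditions, controls the model gap in terms of $\|s_{k+1}\|^2$. Meanwhile, the subproblem's strong convexity (parameter $\rho$) guarantees that each null step increases the regularized model value $f_k(z_{k+1})+\tfrac{\rho}{2}\|z_{k+1}-x_k\|^2$ by at least a fixed fraction, and this value is bounded above by $f(\bar x)$. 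Balancing the increase-per-step against the total available gap yields a telescoping/geometric argument whose ratio involves $(L+\rho)/\rho$; raising to the third power and absorbing the $(1-\beta)^{-2}$ from the descent-test slack produces the stated constant. I expect the delicate bookkeeping to be (i) correctly relating the model gap, the norm $\|s_{k+1}\|$, and the potential increase across a single null step, and (ii) ensuring the per-epoch null-step count is uniform so that multiplying by the descent count is legitimate; getting the exact constant $16(L+\rho)^3/((1-\beta)^2\rho^3)$ will require carefully chasing the smoothness inequality rather than any new conceptual ingredient.
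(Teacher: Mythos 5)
Your proposal is correct and follows essentially the same route as the paper: the paper also takes the descent-step bound directly from Theorem~\ref{thm:LipschitzGeneral} (whose descent analysis never uses Lipschitz continuity) and bounds the null steps by multiplying the descent count plus one by the constant per-epoch bound $16(L+\rho)^3/((1-\beta)^2\rho^3)$ on consecutive null steps, which is exactly the smooth case of Lemma~\ref{lem:null-prox-gap}. The derivation you sketch for that constant --- using smoothness and the prox optimality relation $\|z_{t+1}-x_{k+1}\|=\|s_{t+1}\|/\rho$ to control the subgradients along a null-step run so that the proximal gap cancels out of the count --- is precisely how the paper proves that lemma's smooth specialization.
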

	Selecting $\rho=L$ gives an overall complexity bound of
	$ O\left(\frac{LD^2}{\beta(1-\beta)^2\epsilon}\right).$
	This matches the standard convergence rate for gradient descent \cite{intro_lect}. It is likely that a variant of the bundle method can achieve accelerated rates, either by enforcing additional assumptions about the models $f_{k}$ or by modifying the logic of the algorithm. We leave this as an intriguing open question for future research. We refer the interested reader to \cite{Nesterov2105}, which developed a bundle-like method for smooth functions that outperforms gradient descent in practice. 

Next, we reconsider the settings of Lipschitz continuity and smoothness with additional structure in the form of a H\"older growth bound. We find that the convergence guarantees divide into three regions depending on the growth exponent $p$, whether it is large, equal to, or smaller than $2$. Here $p=2$ is the critical exponent value since the proximal subproblem adds a regularizer with quadratic growth. Regardless, as $p$ decreases, the bundle method converges faster. We defer the proof of the next result to Section~\ref{proof:LipschitzGrowth}.
\begin{theorem}[\textbf{Lipschitz with H\"older growth}] \label{thm:LipschitzGrowth}
	For any $M$-Lipschitz objective function $f$ satisfying the H\"older growth condition~\eqref{eq:holder-growth}, consider applying the bundle method using a constant stepsize $\rho_k=\rho$. Then for any $0<\epsilon \leq f(x_0)-f^*$, the number of descent steps before an $\epsilon$-minimizer is found is at most
	$$ \begin{cases}
	\dfrac{2\rho}{(1-2/p)\beta\mu^{2/p}\epsilon^{1-2/p}} + \left\lceil \dfrac{2\log\left(\frac{f(x_0)-f^*}{(\rho/\mu^{2/p})^{1/(1-2/p)}}\right)}{\beta} \right\rceil_+ & \text{ if } p>2 \\
	\left\lceil \dfrac{2\log\left(\frac{f(x_0)-f^*}{\epsilon}\right)}{\beta\min\{\mu/\rho, 1\}} \right\rceil & \text{ if } p=2 \\
	\left\lceil \dfrac{2\log\left(\frac{(\rho/\mu^{2/p})^{1/(1-2/p)}}{\epsilon}\right)}{\beta} \right\rceil_+ + \dfrac{2\rho(f(x_0)-f^*)^{2/p-1}}{(1-2^{1-2/p})\beta\mu^{2/p}} & \text{ if } 1\leq p<2
	\end{cases} $$
	and the number of null steps is at most
	$$ \begin{cases}
	\dfrac{48\rho M^2}{(1-2/p)\beta(1-\beta)^2\mu^{4/p}\epsilon^{3-4/p}} + \dfrac{32M^2}{\beta(1-\beta)^2\rho(\rho/\mu^{2/p})^{1/(1-2/p)}} & \text{ if } p>2 \\
	\dfrac{16M^2}{\beta(1-\beta)^2\min\{\mu/\rho, 1\}\rho\epsilon}  & \text{ if } p=2 \\
	\dfrac{16M^2}{\beta(1-\beta)^2\rho\epsilon}+ \dfrac{32 M^2}{\beta(1-\beta)^2\rho(\rho/\mu^{2/p})^{1/(1-2/p)}}C & \text{ if } 1\leq p<2
	\end{cases} $$
	with $C=\max\left\{\frac{(f(x_0)-f^*)^{4/p-3}}{(\rho/\mu^{2/p})^{(4/p-3)/(1-2/p)}}, 1\right\}\min\left\{\frac{1}{1-2^{-|4/p-3|}}, \left\lceil\log_2\left(\frac{f(x_0)-f^*}{(\rho/\mu^{2/p})^{1/(1-2/p)}}\right)\right\rceil\right\}$.
\end{theorem}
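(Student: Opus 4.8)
The plan is to bound the number of descent steps and the number of null steps separately, specializing the two master estimates that drive all of the constant-stepsize theorems: a lower bound on the \emph{predicted decrease} $\mathrm{pred}_k := f(x_k) - f_k(z_{k+1})$, and a per-null-step progress bound on the proximal value $\phi_k := \min_z f_k(z) + \frac{\rho}{2}\|z - x_k\|^2$. Throughout I write $\Delta_k := f(x_k) - f^*$ and $d_k := \dist(x_k, X^*)$, and note that $\Delta_k$ is nonincreasing (descent steps decrease $f$, null steps fix $x_k$).

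For the descent count the key is a Polyak-type lower bound on the predicted decrease. Testing the subproblem~\eqref{eq:zDef} at the points $x_\lambda = (1-\lambda)x_k + \lambda\,\proj_{X^*}(x_k)$ for $\lambda \in [0,1]$ and using the minorant property~\eqref{eq:model-lowerBound} together with convexity of $f$ gives $\mathrm{pred}_k \geq \lambda\Delta_k - \tfrac{\rho}{2}\lambda^2 d_k^2$; optimizing over $\lambda$ yields $\mathrm{pred}_k \geq \tfrac12\min\{\Delta_k,\ \Delta_k^2/(\rho d_k^2)\}$. Inserting the growth bound $d_k^2 \leq (\Delta_k/\mu)^{2/p}$ turns this into $\mathrm{pred}_k \geq \tfrac12\min\{\Delta_k,\ (\mu^{2/p}/\rho)\Delta_k^{2-2/p}\}$, and the two branches coincide exactly at the threshold $\Delta_\ast := (\rho/\mu^{2/p})^{1/(1-2/p)}$ appearing in the statement. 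The descent test then gives the recursion $\Delta_{k+1} \leq \Delta_k - \beta\,\mathrm{pred}_k$. Because the sign of $1-2/p$ decides which branch is active on which side of $\Delta_\ast$, this splits into a region of linear contraction $\Delta_{k+1}\leq(1-\tfrac\beta2)\Delta_k$ (with the factor $\min\{\mu/\rho,1\}$ when $p=2$), producing the logarithmic/ceiling terms, and a region governed by the sublinear recursion $\Delta_{k+1}\leq\Delta_k - \tfrac{\beta\mu^{2/p}}{2\rho}\Delta_k^{2-2/p}$, whose step count I bound by comparison with $\int \Delta^{-(2-2/p)}\,d\Delta$, producing the $\epsilon^{-(1-2/p)}$ term for $p>2$ and the $(f(x_0)-f^*)^{2/p-1}$ term for $p<2$. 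The case $p=2$ is purely geometric.

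For the null count I argue one descent epoch at a time. With the center $\bar x = x_k$ fixed and $\rho$ constant, assumptions~\eqref{eq:model-subgradf}--\eqref{eq:parameter-change} make $\phi_k$ nondecreasing and bounded above by $f^* + \tfrac{\rho}{2}\dist(\bar x, X^*)^2$, so the total increase of $\phi$ within the epoch is at most $\tfrac{\rho}{2}\dist(\bar x, X^*)^2$. On every null step the descent test fails, which forces the model gap $G_k := f(z_{k+1}) - f_k(z_{k+1})$ to exceed $(1-\beta)\,\mathrm{pred}_k$; since $f$ is $M$-Lipschitz, the slopes of both the aggregate linearization (slope $s_{k+1}$) and the fresh cut (slope $g_{k+1}$) are bounded by $M$, so a two-affine-plus-quadratic computation gives a per-step increase $\phi_{k+1}-\phi_k \gtrsim \min\{G_k,\ \rho G_k^2/M^2\}$. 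Dividing the epoch's $\phi$-budget by this per-step increase, and invoking the same predicted-decrease lower bound to control $G_k$ uniformly on the epoch, yields a null count of order $M^2\dist(\bar x,X^*)^2 / ((1-\beta)^2\,\mathrm{pred}^2)$ per epoch; summing over epochs (indexed by the descent steps, along which $\Delta$ follows the recursion above) and reusing the geometric/polynomial split gives the stated two-term null bounds.

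The main obstacle is the regime change at $\Delta_\ast$. The crude estimate $\mathrm{pred}_k \geq \Delta_k - \tfrac{\rho}{2}d_k^2$ stalls there---for $p>2$ the induced map has an attracting fixed point near $\Delta_\ast$, so it can never drive $\Delta_k$ below $\Delta_\ast$---which is exactly why the optimized-$\lambda$ refinement is needed to keep the predicted decrease positive and polynomially large in the flat regime. After that, the remaining work is careful but routine: tracking which branch is active for each range of $p$, converting the nonlinear recursions and the epoch-wise null sums into the integral estimates that fix the exponents, and bookkeeping the explicit constants (the $48$, $32$, $16$, and the factor $C$) so that they match the three cases of the statement.
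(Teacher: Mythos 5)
Your descent-step analysis is sound and is essentially the paper's own argument: your Polyak-type lower bound on $\mathrm{pred}_k$ (testing the subproblem along the segment toward $\proj_{X^*}(x_k)$ and optimizing over $\lambda$) is exactly Lemma~\ref{lem:prox-gap-bound} combined with Lemma~\ref{lem:descent-prox-gap}; the threshold $\Delta_\ast=(\rho/\mu^{2/p})^{1/(1-2/p)}$, the three-way case split in $p$, and the treatment of the sublinear recursion (your integral comparison versus the paper's Lemma~\ref{lem:recurrence} and dyadic decomposition) all match, with your constants if anything slightly better.

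The genuine gap is in the null-step count. You bound the null steps in an epoch by dividing the total $\phi$-budget by a \emph{uniform} per-step increase, which gives
$$ T \;\lesssim\; \frac{\rho\,\dist(\bar x,X^*)^2/2}{\rho(1-\beta)^2\,\mathrm{pred}^2/(8M^2)} \;=\; \frac{4M^2\,\dist(\bar x,X^*)^2}{(1-\beta)^2\,\mathrm{pred}^2}, $$
whereas the theorem requires the per-epoch bound $T\lesssim \frac{8M^2}{(1-\beta)^2\rho\Delta}$ of Lemma~\ref{lem:null-prox-gap}. The ratio between your bound and the needed one is $\rho\,\dist(\bar x,X^*)^2/(2\Delta)$, which is not $O(1)$. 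Concretely, for $p=2$ and $\rho\gg\mu$ one has $\Delta\approx \mu\delta_k/(2\rho)$ and $\dist(\bar x,X^*)^2\leq \delta_k/\mu$, so your per-epoch count is worse by a factor $(\rho/\mu)^2$ and the total becomes $O\left(M^2\rho^2/(\mu^3\epsilon)\right)$ instead of the claimed $O\left(M^2/(\mu\epsilon)\right)$; for $p>2$ the loss factor scales like $\epsilon^{-(2-4/p)}$ near the end of the run, so your final null bound has exponent $\epsilon^{-(5-8/p)}$, strictly worse than the stated $\epsilon^{-(3-4/p)}$. Swapping your budget $\tfrac{\rho}{2}\dist(\bar x,X^*)^2$ for the alternative budget $M^2/(2\rho)$ (which is also valid, since right after a descent step $\phi\geq f(x_k)-M^2/2\rho$) does not fix this: it trades the $\rho$-loss for an $\epsilon$-loss. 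The missing idea is that the per-null-step increase of $\phi_t$ is proportional to the square of the \emph{current} model gap $\widetilde\Delta_t = f(x_k)-\phi_t$, not of its terminal value. This yields the recurrence~\eqref{eq:null-recurrence}, $\widetilde\Delta_{t+1}\leq \widetilde\Delta_t - \frac{(1-\beta)^2\rho}{8M^2}\widetilde\Delta_t^2$, and solving it (Lemma~\ref{lem:recurrence} with $q=2$) gives $T\leq \frac{8M^2}{(1-\beta)^2\rho\Delta}$ \emph{independently} of the epoch's initial gap or budget: the early null steps, where $\widetilde\Delta_t$ is large, consume the budget far faster than your uniform estimate credits them for. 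With Lemma~\ref{lem:null-prox-gap} in place of your budget argument, the rest of your outline goes through and recovers the stated bounds.
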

	When $p=2$, selecting $\rho=\mu$ gives an optimal overall complexity bound of $O(M^2/\mu\epsilon)$. Selecting $\rho=O(\epsilon^{1-2/p})$ matches the optimal rate for Lipschitz optimization with growth exponent $p>2$. When $p=1$, selecting $\rho= O(1/\sqrt{\epsilon})$ minimizes this bound, but the resulting sublinear $O(1/\sqrt{\epsilon})$ rate falls short of the best known rate for sharp, Lipschitz optimization, i.e., linear convergence \cite{polyak1987introduction}. In the next section where we consider nonconstant stepsizes, this disconnect will be remedied and a linear convergence guarantee will follow.

The proof of the next result is deferred to Section~\ref{proof:SmoothGrowth}.
\begin{theorem}[\textbf{Smooth with H\"older growth}] \label{thm:SmoothGrowth}
	For any $L$-smooth objective function $f$ satisfying the H\"older growth condition~\eqref{eq:holder-growth}, consider applying the bundle method using a constant stepsize $\rho_k=\rho$. Then for any $0<\epsilon \leq f(x_0)-f^*$, the number of descent steps before an $\epsilon$-minimizer is found is at most
	$$ \begin{cases}
	\dfrac{2\rho}{(1-2/p)\beta\mu^{2/p}\epsilon^{1-2/p}} + \left\lceil \dfrac{2\log\left(\frac{f(x_0)-f^*}{(\rho/\mu^{2/p})^{1/(1-2/p)}}\right)}{\beta} \right\rceil_+ & \text{ if } p>2 \\
	\left\lceil \dfrac{2\log\left(\frac{f(x_0)-f^*}{\epsilon}\right)}{\beta\min\{\mu/\rho, 1\}} \right\rceil & \text{ if } p=2
	\end{cases} $$
	and the number of null steps is at most
	$$ \begin{cases}
	\dfrac{16(L+\rho)^3}{(1-\beta)^2\rho^3}\left(\dfrac{2\rho}{(1-2/p)\beta\mu^{2/p}\epsilon^{1-2/p}} + \left\lceil \dfrac{2\log\left(\frac{f(x_0)-f^*}{(\rho/\mu^{2/p})^{1/(1-2/p)}}\right)}{\beta} \right\rceil_+ +1\right) & \text{ if } p>2 \\
	\dfrac{16(L+\rho)^3}{(1-\beta)^2\rho^3}\left\lceil \dfrac{2\log\left(\frac{f(x_0)-f^*}{\epsilon}\right)}{\beta\min\{\mu/\rho, 1\}} \right\rceil & \text{ if } p=2 \ .
	\end{cases} $$
\end{theorem}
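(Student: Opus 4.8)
The plan is to recognize Theorem~\ref{thm:SmoothGrowth} as a modular combination of two analyses already assembled for the preceding theorems: the descent-step count is inherited \emph{verbatim} from the H\"older-growth analysis behind Theorem~\ref{thm:LipschitzGrowth}, while the null-step count is inherited from the smoothness analysis behind Theorem~\ref{thm:SmoothGeneral}. First I would isolate the descent-step bound and argue it depends only on the generic per-descent-step decrease guarantee together with the growth condition~\eqref{eq:holder-growth}, and never on Lipschitz continuity or smoothness. The standard bundle mechanics (minorant property plus optimality of the proximal candidate $z_{k+1}$) yield a scalar recursion for the gap $\Delta_k := f(x_k) - f^*$ valid on descent steps; substituting $\dist(x_k, X^*) \leq (\Delta_k/\mu)^{1/p}$ collapses this to a one-dimensional recursion whose closed-form solution splits according to the sign of $1 - 2/p$. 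This produces the polynomial count for $p>2$ and the geometric (hence logarithmic) count for $p=2$. Since this is exactly the descent portion of Theorem~\ref{thm:LipschitzGrowth}, the identical two-case bound transfers unchanged.

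Before combining, I would justify the conspicuous absence of the $1 \le p < 2$ case. An $L$-smooth convex $f$ satisfies $f(x) - f^* \leq \frac{L}{2}\dist(x, X^*)^2$, so pairing this with the growth lower bound forces $\mu\,\dist(x, X^*)^{p} \leq \frac{L}{2}\dist(x, X^*)^{2}$; as $\dist(x, X^*) \to 0$ this is impossible for $p < 2$ unless $X^*$ is degenerate. Hence only $p \geq 2$ is realizable in the smooth setting, which is precisely why the theorem lists just the two cases it does.

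For the null steps, I would invoke the smoothness-specific fact established in the proof of Theorem~\ref{thm:SmoothGeneral}: between any two consecutive descent steps the method performs at most $\frac{16(L+\rho)^3}{(1-\beta)^2\rho^3}$ null steps. This uniform per-epoch bound is driven solely by the quadratic majorant available under smoothness—it makes no reference to any growth assumption—so it applies without modification in the present setting. Multiplying this constant by the number of epochs, namely (number of descent steps) $+\,1$, where the $+1$ absorbs the null steps that may occur before the first descent step, gives the stated total null-step bound in both cases. The descent count from the first paragraph and the null count from this paragraph then compose directly into the two displayed formulas.

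The main obstacle is verifying that this separation is genuinely clean, i.e., that the two component analyses are each stated at the right level of generality in Section~\ref{sec:theory-proofs} to be reused intact. Concretely, I must confirm that the smooth per-epoch null bound is proved without ever invoking growth (so it survives restriction to the growth regime), and conversely that the growth-based descent recursion never secretly used Lipschitz continuity. Both hold because each argument is powered by exactly one hypothesis—the descent recursion by the proximal/minorant structure plus growth, the null bound by the smooth quadratic upper model—so once this orthogonality is checked, the remainder is routine bookkeeping of the two counts.
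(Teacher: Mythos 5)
Your proposal is correct and follows essentially the same route as the paper: the descent-step count is taken verbatim from Theorem~\ref{thm:LipschitzGrowth} (whose descent analysis uses only Lemma~\ref{lem:descent-prox-gap}, Lemma~\ref{lem:prox-gap-bound}, and the growth condition, never Lipschitz continuity), and the null-step count follows by multiplying the number of descent steps (plus one) by the constant per-epoch bound $\frac{16(L+\rho)^3}{(1-\beta)^2\rho^3}$ from the smooth case of Lemma~\ref{lem:null-prox-gap}. Your added observation that $L$-smoothness forces $f(x)-f^*\leq \frac{L}{2}\dist(x,X^*)^2$, so that $1\leq p<2$ growth is unrealizable and rightly absent from the statement, is a nice touch the paper leaves implicit.
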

  Selecting $\rho=L$ gives an overall complexity bound matching gradient descent \cite{intro_lect}.

\subsection{Convergence Rates from Improved Stepsize Choice} \label{sec:improved-step-rates}
Picking $\rho_k$ to vary throughout the execution of the bundle method allows for stronger convergence guarantees. These rates are formalized in the following pair of theorems that consider settings with and without H\"older growth. In the latter case, we find that our stepsize choice removes the need for piecewise guarantees around growth exponent $p=2$, which notably simplifies the statement of our results.

Intuitively, the stepsize choices are aim to mimic the following idealistic (and impractical) stepsize rule that naturally arises from our theory
\begin{equation} \label{eq:ideal-stepsize}
\rho_k = \frac{f(x_k)-f(x^*)}{\|x_k-x^*\|^2} \ .
\end{equation}
In Section~\ref{sec:theory-proofs} after introducing our main lemmas, we discuss the convergence bounds that would result from approximating this stepsize. The proof techniques we develop could be extended to study other nonconstant stepsizes. For instance, stepsizes that shrink/grow polynomial with the number of iterations, mirroring those used for subgradient methods. The analysis of such schemes is beyond the scope of this work.

\if\siam1
The proofs of the following two results are similar to those with constant stepsizes; we refer the interested reader to the technical report \cite{diaz2021optimal} for detailed arguments.
\else
The proof of the next two Theorems are presented in Sections~\ref{proof:OPT-LipschitzGeneral} and \ref{proof:OPT-LipschitzGrowth}, respectively.
\fi
\begin{theorem}[\textbf{Lipschitz}] \label{thm:OPT-LipschitzGeneral}
	For any $M$-Lipschitz objective function $f$, consider applying the bundle method using the stepsize policy
	\begin{equation} \label{eq:OPTgeneral-stepsize}
	\rho_k = (f(x_k)-f^*)/D^2
	\end{equation}
	with any choice of $D^2 \geq \sup\{\dist(x,X^*)^2 \mid f(x)\leq f(x_0)\}$. Then for any $0<\epsilon \leq f(x_0)-f^*$, the number of descent steps before an $\epsilon$-minimizer is found is at most
	$$ \left\lceil\frac{2\log\left(\frac{f(x_0)-f^*}{\epsilon}\right)}{\beta}\right\rceil $$
	and the number of null steps is at most
	$$ \left(\frac{1}{1-(1-\beta/2)^2}\right)\frac{8M^2D^2}{(1-\beta)^2\epsilon^2} \ . $$
\end{theorem}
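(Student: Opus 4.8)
Here is how I would attack the statement. The whole argument rests on one estimate that the stepsize rule is engineered to produce: if $x=x_k$ is the current iterate with gap $\delta:=f(x_k)-f^*$ and $\rho_k=\delta/D^2$, then evaluating the proximal value $\Phi_k:=f_k(z_{k+1})+\tfrac{\rho_k}{2}\|z_{k+1}-x_k\|^2=\min_z\{f_k(z)+\tfrac{\rho_k}{2}\|z-x_k\|^2\}$ at any $x^\ast\in X^\ast$ and using the minorant property \eqref{eq:model-lowerBound} together with $\dist(x_k,X^\ast)^2\le D^2$ (valid since $f(x_k)\le f(x_0)$) gives $\Phi_k\le f^\ast+\tfrac{\rho_k}{2}D^2=f^\ast+\tfrac12\delta$. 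Since $f_k(z_{k+1})\le\Phi_k$, the predicted decrease satisfies $v_k:=f(x_k)-f_k(z_{k+1})\ge f(x_k)-\Phi_k\ge\tfrac12\delta$. The plan is to feed this single inequality into separate descent- and null-step accountings, after partitioning the run into ``cycles'' (a descent step followed by the null steps up to the next descent). Note that within a cycle $x_k$ is fixed, so $f(x_k)$ and hence $\rho_k$ are constant, which is consistent with Assumption~\ref{ass:main}(4).

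For the descent-step count, at a descent step the acceptance test guarantees $f(x_{k+1})-f^\ast\le (f(x_k)-f^\ast)-\beta v_k\le (1-\beta/2)(f(x_k)-f^\ast)$ using $v_k\ge\tfrac12(f(x_k)-f^\ast)$. Thus the gap contracts by the factor $(1-\beta/2)$ at every descent step, so after $N$ of them the gap is at most $(1-\beta/2)^N(f(x_0)-f^\ast)$; solving $(1-\beta/2)^N(f(x_0)-f^\ast)\le\epsilon$ and using $\log\tfrac{1}{1-\beta/2}\ge\beta/2$ yields the claimed $\lceil 2\log((f(x_0)-f^\ast)/\epsilon)/\beta\rceil$ bound. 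This part I expect to be routine.

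The null-step count per cycle is the technical heart. Here I would use two ingredients. First, the standard strong-convexity/aggregation estimate coming from Assumption~\ref{ass:main}(3): along null steps $\Phi_{k+1}\ge\Phi_k+\tfrac{\rho}{2}\|z_{k+2}-z_{k+1}\|^2$, so $G_k:=f(x_k)-\Phi_k$ is nonincreasing and stays $\ge\tfrac12\delta$ by the key estimate. Second, a movement lower bound: on a null step at $k+1$ the failed-descent condition forces the model error $f(z_{k+2})-f_{k+1}(z_{k+2})>(1-\beta)v_{k+1}$, while the cut \eqref{eq:model-subgradf} and $M$-Lipschitzness bound this error by $2M\|z_{k+2}-z_{k+1}\|$, giving $\|z_{k+2}-z_{k+1}\|>\tfrac{(1-\beta)v_{k+1}}{2M}\ge\tfrac{(1-\beta)\delta}{4M}$. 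Combining, each null step decreases $G_k$ by at least $\tfrac{\rho(1-\beta)^2 v_{k+1}^2}{8M^2}$. The subtlety I expect to be the main obstacle is that a naive ``total budget over minimal per-step decrease'' argument is lossy by a factor $M^2D^2/\delta^2$ (it produces an $O(\delta^{-4})$ per-cycle count), because $G_k$ can start as large as $\tfrac{M^2D^2}{2\delta}$. The fix is to exploit that the per-step decrease is quadratic in $v_{k+1}\ge G_{k+1}$: the decrease $G_k-G_{k+1}\gtrsim \tfrac{\rho(1-\beta)^2}{M^2}G_{k+1}^2$ means steps are cheap (fast $G$-decrease, hence few steps) while $G$ is large. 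Summing this over dyadic scales of $G$ down to $\tfrac12\delta$ (equivalently, comparing to $\int \frac{dG}{a G^2}$ with $a=\tfrac{\rho(1-\beta)^2}{8M^2}$) collapses the count to $O\!\big(\tfrac{1}{a\delta}\big)=O\!\big(\tfrac{M^2}{(1-\beta)^2\rho\delta}\big)=O\!\big(\tfrac{M^2D^2}{(1-\beta)^2\delta^2}\big)$ per cycle, where I substituted $\rho=\delta/D^2$.

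Finally I would sum the per-cycle null bounds over all cycles. Because the gaps $\delta$ at successive cycles shrink geometrically by $(1-\beta/2)$, the reciprocal squares $\delta^{-2}$ form a geometric series with ratio $(1-\beta/2)^2$ dominated by its last (smallest-gap) term, which is at least $\epsilon$. This produces exactly the prefactor $\tfrac{1}{1-(1-\beta/2)^2}$ multiplying $\tfrac{8M^2D^2}{(1-\beta)^2\epsilon^2}$, matching the statement up to the explicit constant, which should fall out once the movement-bound constant and the off-by-one at the cycle's terminal descent step are tracked carefully.
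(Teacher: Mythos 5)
Your proposal is correct and shares the paper's overall architecture: the stepsize-engineered bound $\Delta_k \ge \tfrac12(f(x_k)-f^*)$ (your $v_k \ge \tfrac12\delta$ is the model-level version, obtained the same way by evaluating the prox objective at $x^*$ and invoking the minorant property together with $\dist(x_k,X^*)^2\le D^2$), geometric contraction of the gap across descent steps, a per-cycle null-step bound of order $M^2D^2/((1-\beta)^2\delta^2)$, and a geometric summation producing the prefactor $1/(1-(1-\beta/2)^2)$. Where you genuinely diverge is in how the per-cycle null-step count is derived. The paper invokes its Lemma~\ref{lem:null-prox-gap}, whose proof builds the two-cut minorant $\tilde f_{t+1}$ from \eqref{eq:model-subgradf} and \eqref{eq:model-subgradfk}, computes the prox on it in closed form (Claim~\ref{claim:closedForm}), and extracts the recurrence $\widetilde\Delta_{t+1}\le \widetilde\Delta_t - \tfrac{(1-\beta)^2\rho\widetilde\Delta_t^2}{8M^2}$, solved by Lemma~\ref{lem:recurrence}. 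You instead combine (a) the classical aggregation/strong-convexity monotonicity $\Phi_{t+1}\ge \Phi_t + \tfrac{\rho}{2}\|z_{t+2}-z_{t+1}\|^2$, which follows from \eqref{eq:model-subgradfk} and $s_{t+1}=\rho(x_{k+1}-z_{t+1})$, with (b) a movement lower bound $\|z_{t+2}-z_{t+1}\| > \tfrac{(1-\beta)v_{t+1}}{2M}$ coming from the failed descent test, the cut \eqref{eq:model-subgradf}, and Lipschitzness. This yields an essentially identical recurrence $G_t - G_{t+1} \ge \tfrac{(1-\beta)^2\rho}{8M^2}G_{t+1}^2$, and your dyadic/integral summation down to the floor $G\ge\delta/2$ is a legitimate substitute for Lemma~\ref{lem:recurrence}; your diagnosis that the naive budget-over-minimum-decrease count is lossy by a factor $M^2D^2/\delta^2$, and that the quadratic dependence of the decrease on $G$ must be exploited, is exactly the point of both proofs. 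Your route is more elementary (no closed-form prox computation needed) and is the classical bundle-aggregation argument; what the paper's lemma buys is reusability, since it tolerates nondecreasing stepsizes within a null sequence and, via the bound $G_{k+1}^2 \le 2(L+\rho_{k+1})^3\Delta_{k+1}/\rho_{k+1}^2$, also covers the smooth case, which your Lipschitz-specific movement bound does not. Two accounting caveats, both minor: the aggregation inequality requires the model subgradient lower bound \eqref{eq:model-subgradfk}, which Assumption~\ref{ass:main} imposes only after null steps, so your $G$-decrease chain starts at the first null step of each cycle (the off-by-one you flag); and your constants land roughly a factor of two above the stated $8M^2D^2/((1-\beta)^2\epsilon^2)$, which is within the slack the paper itself carries between the statement of Lemma~\ref{lem:null-prox-gap} and its application.
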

\begin{theorem}[\textbf{Lipschitz with H\"older growth}] \label{thm:OPT-LipschitzGrowth}
	For any $M$-Lipschitz objective function $f$ satisfying the H\"older growth condition~\eqref{eq:holder-growth}, consider applying the bundle method using the stepsize policy
	\begin{equation} \label{eq:OPTholder-stepsize}
	\rho_k = \mu^{2/p}(f(x_k)-f^*)^{1-2/p}.
	\end{equation}
	Then for any $0<\epsilon \leq f(x_0)-f^*$, the number of descent steps before an $\epsilon$-minimizer is found is at most
	$$ \left\lceil\frac{2\log\left(\frac{f(x_0)-f^*}{\epsilon}\right)}{\beta}\right\rceil $$
	and the number of null steps is at most
	$$ \begin{cases}
	\left(\dfrac{1}{1-(1-\beta/2)^{2-2/p}}\right)\dfrac{8M^2}{(1-\beta)^2\mu^{2/p}\epsilon^{2-2/p}} & \text{ if } p>1 \\
	\dfrac{8M^2}{(1-\beta)^2\mu^2}\left\lceil\dfrac{2\log\left(\frac{f(x_0)-f^*}{\epsilon}\right)}{\beta}\right\rceil & \text{ if } p=1 \ .
	\end{cases}$$
\end{theorem}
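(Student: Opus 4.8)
The plan is to combine two per-cycle estimates—one controlling the decrease of the optimality gap at each descent step, one bounding the number of consecutive null steps in a cycle—and then sum them over all cycles. Write $\Delta_j := f(x^{(j)})-f^*$ for the gap at the $j$-th descent iterate $x^{(j)}$. First note the stepsize \eqref{eq:OPTholder-stepsize} is admissible under Assumption~\ref{ass:main}: since $x_k$ is frozen during null steps, $\rho_k=\mu^{2/p}(f(x_k)-f^*)^{1-2/p}$ is constant across each null phase, so \eqref{eq:parameter-change} holds with equality. This choice is exactly the growth-based surrogate for the idealistic rule \eqref{eq:ideal-stepsize}: the H\"older bound \eqref{eq:holder-growth} gives $\dist(x_k,X^*)^2\le((f(x_k)-f^*)/\mu)^{2/p}$, hence $\tfrac{\rho_k}{2}\dist(x_k,X^*)^2\le\tfrac12(f(x_k)-f^*)$, an identity I will use repeatedly.

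For the descent count I would first lower bound the model-predicted decrease $D_k:=f(x_k)-f_k(z_{k+1})$. Comparing the subproblem \eqref{eq:zDef} value at $z_{k+1}$ against a projection $\bar x_k\in\proj_{X^*}(x_k)$ and using the minorant property \eqref{eq:model-lowerBound} (so $f_k(\bar x_k)\le f^*$) gives $f_k(z_{k+1})\le f^*+\tfrac{\rho_k}{2}\dist(x_k,X^*)^2$, whence $D_k\ge\Delta_k-\tfrac{\rho_k}{2}\dist(x_k,X^*)^2\ge\tfrac12\Delta_k$ by the surrogate identity. On a descent step the acceptance test yields $f(x_k)-f(x_{k+1})\ge\beta D_k\ge\tfrac{\beta}{2}\Delta_k$, i.e. $\Delta_{k+1}\le(1-\beta/2)\Delta_k$. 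Iterating this contraction and using $-\log(1-\beta/2)\ge\beta/2$ produces the claimed descent-step bound $\lceil 2\beta^{-1}\log((f(x_0)-f^*)/\epsilon)\rceil$.

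The crux is the null-step count within a single cycle, with base point $\hat x$, gap $\Delta:=f(\hat x)-f^*$, and frozen stepsize $\rho=\mu^{2/p}\Delta^{1-2/p}$. I would track the proximal value $v_k:=f_k(z_{k+1})+\tfrac{\rho}{2}\|z_{k+1}-\hat x\|^2=\min_z f_k(z)+\tfrac{\rho}{2}\|z-\hat x\|^2$ and the gap $G_k:=f(\hat x)-v_k\ge0$. Right after the descent step the fresh cut \eqref{eq:model-subgradf} forces $v\ge f(\hat x)-M^2/(2\rho)$, so $G\le M^2/(2\rho)$ initially. For the per-step increase, lower bound $f_{k+1}$ by the maximum of the two affine minorants from \eqref{eq:model-subgradf} and \eqref{eq:model-subgradfk}, minimize that max plus the quadratic, and use $\|g_{k+1}\|\le M$ together with $\|s_{k+1}\|\le M$ (any affine minorant of an $M$-Lipschitz convex $f$ has slope of norm at most $M$) to obtain $v_{k+1}-v_k\ge\rho\,\delta_k^2/(8M^2)$, where $\delta_k:=f(z_{k+1})-f_k(z_{k+1})\ge0$. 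The null condition rearranges to $\delta_k>(1-\beta)D_k\ge(1-\beta)\max\{G_k,\tfrac12\Delta\}$. Feeding $\delta_k>(1-\beta)G_k$ into the increase gives the quadratic recursion $G_k-G_{k+1}\ge\tfrac{\rho(1-\beta)^2}{8M^2}G_k^2$, which drives $G_k$ below $\tfrac12\Delta$ within $O(M^2/(\rho(1-\beta)^2\Delta))$ steps; once $G_k\le\tfrac12\Delta$, the floor $\delta_k>\tfrac{1-\beta}{2}\Delta$ makes each increase at least $\rho(1-\beta)^2\Delta^2/(32M^2)$, so the remaining budget is exhausted in another $O(M^2/(\rho(1-\beta)^2\Delta))$ steps. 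Careful bookkeeping of both phases yields the clean per-cycle count $8M^2/((1-\beta)^2\rho\Delta)=8M^2/((1-\beta)^2\mu^{2/p}\Delta^{2-2/p})$.

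Finally I would sum the per-cycle counts over all cycles with $\Delta_j\ge\epsilon$. Since consecutive gaps contract by $(1-\beta/2)$, indexing backward from the last cycle gives $\Delta_{J-i}\ge\epsilon(1-\beta/2)^{-i}$, so $\Delta_{J-i}^{-(2-2/p)}\le\epsilon^{-(2-2/p)}(1-\beta/2)^{i(2-2/p)}$. For $p>1$ the exponent $2-2/p$ is positive and the geometric series sums to $(1-(1-\beta/2)^{2-2/p})^{-1}$, reproducing the stated bound; for $p=1$ the exponent is zero, every cycle contributes the same $8M^2/((1-\beta)^2\mu^2)$, and multiplying by the number of cycles (the descent-step bound) gives the logarithmic form. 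The main obstacle is precisely the null-step lemma: the uniform floor $\delta_k>\tfrac{1-\beta}{2}\Delta$ by itself only yields the lossy count $O(M^4/(\rho^2\Delta^2))$, and it is the quadratic decay of $G_k$ that is essential to recover the correct $O(M^2/(\rho\Delta))$ scaling whose summation matches the rates in Table~\ref{tab:bundle-rates}.
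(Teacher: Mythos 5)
Your proposal is correct and follows essentially the same route as the paper: a per-descent-step contraction $\delta_{k+1}\leq(1-\beta/2)\delta_k$ obtained from the growth-balanced stepsize, a per-cycle null-step bound of order $M^2/((1-\beta)^2\rho\Delta)$ driven by the quadratic recursion on the model proximal gap (your $G_k$ is exactly the paper's $\widetilde\Delta_t$ from Lemma~\ref{lem:null-prox-gap}), and a geometric summation over cycles split into the $p>1$ and $p=1$ cases. The only differences are cosmetic—you inline the proofs of Lemmas~\ref{lem:descent-prox-gap}, \ref{lem:prox-gap-bound}, and \ref{lem:null-prox-gap} and use a two-phase exhaustion argument where the paper invokes its recurrence-solving Lemma~\ref{lem:recurrence}—and these affect only unimportant constant factors.
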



\section{The parallel bundle method}\label{sec:parallel-bundle}
Several works \cite{kiwiel1990proximity, bonnans1995family, lemarechal1997variable, de2016doubly}  have proposed practical step size rules that update $\rho_{k}$ on the fly using the objective values of the iterates. Such rules enjoy asymptotic convergence guarantees and tend to perform well in practice. However, to our knowledge, none of them come equipped with efficiency rates. In this section, we take an alternative approach to tackle tuning: we run a logarithmic number of parallel instances of the bundle method, with different constant step sizes, inspired by the ideas of~\cite{RenegarGrimmer2021}. The instances communicate at the end of each iteration and update their models based on each other's progress. We show that this procedure achieves optimal convergence rates, up to the cost of running a logarithmic number of instances which can be mitigated through parallelization. Unlike the step sizes proposed in the previous section, { this parallel scheme provides a provably near optimal proximal bundle method that is parameter-free. This scheme's primary drawback lies in the nontrivial overhead of running several instances at once.} 

The core observation behind our parallel method is that if we run the bundle method with the nonconstant step size rules~\eqref{eq:OPTgeneral-stepsize} and~\eqref{eq:OPTholder-stepsize}, then the step sizes $\rho_{k}$ will lie in the interval $$\rho_k \in \left[c_{1}\epsilon, c_{2}\epsilon^{-1}\right], \quad \text{for some fixed constants }c_{1}, c_{2} > 0, $$ before the algorithm finds an $\epsilon$-minimizer.
  Intuitively, our plan will be to run a modest number of instances of the bundle method, each with a different constant step size in the interval above, and combine their progress to achieve a faster convergence rate.

As input, we only assume the following are given: a lower bound $\bar{\rho}$ and an upper bound $2^J\bar{\rho}$ on the range of step sizes to consider, where $J \in \NN$ controls the number of instances we run in parallel.  Provided our step size rules~\eqref{eq:OPTgeneral-stepsize} and~\eqref{eq:OPTholder-stepsize} lie in this interval,
$$ \rho_k \in \left[\bar{\rho}, 2^J\bar{\rho}\right] \ ,$$
we are able to recover nearly optimal convergence rates. Notice that the interval $[\bar{\rho},2^J\bar{\rho}]$ can span the whole range of step sizes needed for our H\"older growth analysis by setting $\bar{\rho}=O(\epsilon)$ and $J= O(\log_{2}(1/\epsilon^2))$. Our resulting convergence guarantees only depend logarithmically on the size of this interval (a cost which can be mitigated through parallelization), so $\bar{\rho}$ and $2^J\bar{\rho}$ can bet set generously at {relatively} little cost.

\paragraph{Description of the algorithm.} We now informally outline the parallel bundle method and refer the reader to Algorithm \ref{alg:cent} for detailed pseudo-code. Recall that we use $g(z)$ to denote the subgradient oracle evaluated at $z \in \RR^{d}$. We propose running $J$ instances of the bundle method in parallel
  and enumerate them as $\{0,\dots J-1\}$. The $j$th instance uses a constant step size $\rho^{(j)} = 2^j\bar{\rho}$. We denote its iterates by $x^{(j)}_k$ and its model functions by $f^{(j)}_k$. Each instance $j$ proceeds as normal with the only modification being that after it takes a descent step, the algorithm checks if there exists an instance $j'$ that has an iterate with even lower objective value $f(x^{(j')}_{k}) < f(x^{(j)}_{k+1})$. If such an improvement exists, instance $j$ updates
  $$\begin{cases} x^{(j)}_{k+1} &\leftarrow x^{(j')}_{k} \\
    f^{(j)}_{k+1}(z) &\leftarrow f(x^{(j')}_{k})+\langle g(x^{(j')}_{k}), z-x^{(j')}_{k}\rangle \end{cases}$$
  and then proceeds.

  For the sake of analysis, we assume that each parallel instance of the bundle method operates \emph{synchronously}; that is, all instances have to finish the current iteration before any of them can proceed to the next one. This process can be implemented either sequentially or in parallel by cycling through the bundle method instances and computing one iteration for each before repeating. An asynchronous variant of this procedure could be analyzed as well, using similar techniques as those in \cite{RenegarGrimmer2021}. However, this is beyond the focus of this work.

  The choice to use powers of two defining $\rho^{(j)}$ is arbitrary. For the numerical experiments in Section~\ref{sec:numerics}, we use powers of $10$ and $10000$ demonstrating the effectiveness of this scheme even when using a sparse selection of sample step sizes.

\begin{algorithm2e}[t]
  
  \SetAlgoNoLine
      \KwData{$x_0 \in \RR^n$, $f_0=f(x_0)+\langle g(x_0), \cdot -x_0\rangle, \bar\rho \in \RR_{+}, J \in \NN_{+}$}
    \For{$j \in \{0, \dots, J-1\}$}{
      Set $\rho^{(j)} \leftarrow  2^j \cdot \bar \rho$, $x_{0}^{(k)} \leftarrow x_{0}$, $\texttt{descent}^{(j)} = \texttt{true}$\;
    }
    \BlankLine
    \Step(){}{
      \ParFor(\tcp*[f]{Each instance runs a step}){$j \in \{0, \dots, J-1\}$} {
        Compute $ \displaystyle z_{k+1}^{(j)} \leftarrow \argmin_{z \in \RR^{d}} f_{k}^{(j)} (z) + \frac{\rho^{(j)}}{2}\|z-x_{k}^{(j)}\|^2$\;
        \If(\tcp*[f]{Descent step}){$\beta(f(x_{k}^{(j)}) - f_{k}^{(j)} (z_{k+1}^{(j)})) \leq  f(x_{k}^{(j)})-f(z_{k+1}^{(j)})$}{
          Update $x_{k+1}^{(j)} \leftarrow z_{k+1}^{(j)}$, and $\texttt{descent}^{(j)} \leftarrow \texttt{true}$\;
        }
        \uElse(\tcp*[f]{Null step}){
          Update $x_{k+1}^{(j)}\leftarrow x_{k}^{(j)}$ and $\texttt{descent}^{(j)} \leftarrow \texttt{false}$\;
        }
        Update $f_{k+1}^{(j)}$ without violating Assumption \ref{ass:main}.
      }
      \BlankLine
      Find best iterate $j^{*} \leftarrow \argmin_{j \in \{0, \dots, J-1\}} f(x_{k}^{(j)})$\;
      \BlankLine
      \ParFor(\tcp*[f]{Communication round}){$ j\in \{0, \dots, J-1\}$} {
        \If{$\texttt{\upshape descent}^{(j)}$ \textbf{\upshape and} $f(x_{k}^{(j^{*})}) < f(x_{k+1}^{(j)})$}{
          Update $x_{k+1}^{(j)} \leftarrow x_{k}^{(j^{*})}$, and $f_{k+1}^{(j)} \leftarrow f(x_{k}^{(j^{*})}) + \dotp{g(x_{k}^{(j^{*})}), \cdot - x_{k}^{(j^{*})}}$\;
        }
      }
    }
  \caption{Parallel bundle method}
  \label{alg:cent}
\end{algorithm2e}
\subsection{Convergence Rates for the Parallel Bundle Method}
First, we remark that all of our previous convergence theory for constant stepsizes (Theorems~\ref{thm:LipschitzGeneral}, \ref{thm:SmoothGeneral}, \ref{thm:LipschitzGrowth}, and \ref{thm:SmoothGrowth}) immediately apply to the Parallel Bundle Method fixing $\rho=2^j\bar{\rho}$ for any $j\in\{0,\dots J-1\}$. This follows as our convergence theory on relies on a lemma ensuring sufficient decrease at each descent step (Lemma~\ref{lem:descent-prox-gap}) and the new case of a bundle method restarting at another method's lower objective value iterate can only further improve on this decrease. Hence any individual instance of the bundle method with $\rho^{(j)}=2^j\bar{\rho}$ in our parallel scheme will converge at least as fast as Theorems~\ref{thm:LipschitzGeneral}, \ref{thm:SmoothGeneral}, \ref{thm:LipschitzGrowth}, and \ref{thm:SmoothGrowth} guarantee it would converge on its own.

Further and more importantly, when our nonconstant stepsize rules~\eqref{eq:OPTgeneral-stepsize} and~\eqref{eq:OPTholder-stepsize} lie in the interval $[\bar{\rho},2^J\bar{\rho}]$, we find that their convergence theory (Theorems~\ref{thm:OPT-LipschitzGeneral} and \ref{thm:OPT-LipschitzGrowth}) also extends to our parallel algorithm. This is formalized by the following theorem. We defer the proof of this result to Section~\ref{proof:parallel}.

\begin{theorem} \label{thm:OPT-ParallelRate}
  For any $M$-Lipschitz objective function $f$ that satisfies the H\"older growth condition~\eqref{eq:holder-growth}, consider applying the Parallel Bundle Method with stepsizes (Algorithm~\ref{alg:cent}) with input $x_{0} \in \RR^{d}$, $\bar \rho \in \RR_{+}$, and $J \in \NN_{+}$. Then for any $0< \epsilon\leq f(x_0)-f(x^*)$, if
  $$ \bar{\rho} \leq \frac{1}{4}\mu^{2/p}\min\{\epsilon^{1-2/p},(f(x_0)-f(x^*))^{1-2/p}\} $$
  and
  $$ J\geq \log_2\left(\frac{\mu^{2/p}(\max\{\epsilon^{1-2/p},(f(x_0)-f(x^*))^{1-2/p}\}}{4\bar{\rho}}\right)\ ,$$
  then the iterate with minimum objective $x_{\text{\upshape best}}$ becomes an $\epsilon$-minimizer within the first
  $$ \begin{cases}
	\left(\dfrac{2}{1-(1-\beta/2)^{2-2/p}}\right)\dfrac{64M^2}{(1-\beta)^2\mu^{2/p}\epsilon^{2-2/p}} + 2\left\lceil \dfrac{2\log(\frac{f(x_0)-f^*}{\epsilon})}{\beta}\right\rceil  & \text{ if } p>1 \\
	2\left(\dfrac{64M^2}{(1-\beta)^2\mu^{2}} +1\right)\left\lceil \dfrac{2\log(\frac{f(x_0)-f^*}{\epsilon})}{\beta}\right\rceil & \text{ if } p=1
  \end{cases} $$
  iterations, {each utilizing $J$ subgradient oracle evaluations}.
\end{theorem}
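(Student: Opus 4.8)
The plan is to show that the dyadic grid of step sizes $\{2^j\bar\rho\}_{j=0}^{J-1}$ brackets the idealized nonconstant rule~\eqref{eq:OPTholder-stepsize} throughout the run, so that at every scale of the optimality gap \emph{some} instance reproduces the single-instance optimal behaviour of \Cref{thm:OPT-LipschitzGrowth}, while the communication step forces the recorded best iterate to track whichever instance is most advanced. First I would prove a coverage statement: under the stated bounds on $\bar\rho$ and $J$, for every gap $\Delta := f(x)-f^\ast \in [\epsilon,\, f(x_0)-f^\ast]$ there is an index $j$ with $c\,\rho^\star(\Delta) \le 2^j\bar\rho \le \rho^\star(\Delta)$, where $\rho^\star(\Delta) = \mu^{2/p}\Delta^{1-2/p}$ is the ideal step size and $c>0$ is an absolute constant (the factors of $1/4$ in the hypotheses supply the slack). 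This is a one-line monotonicity computation; the only care needed is the sign of $1-2/p$, which reverses the direction in which $\rho^\star$ moves as $\Delta$ shrinks, and is precisely why the hypotheses are phrased with a $\min$ and $\max$ over $\epsilon^{1-2/p}$ and $(f(x_0)-f^\ast)^{1-2/p}$.

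Next I would revisit the computation underlying \Cref{lem:descent-prox-gap}: the minorant property of Assumption~\ref{ass:main} together with the prox optimality of $z_{k+1}$ gives $f(x_k)-f_k(z_{k+1}) \ge \Delta - \tfrac{\rho}{2}\dist(x_k,X^\ast)^2$, and H\"older growth~\eqref{eq:holder-growth} bounds $\dist(x_k,X^\ast)^2 \le (\Delta/\mu)^{2/p}$. Hence any instance whose step satisfies $\rho \le \rho^\star(\Delta)$ achieves at least a $\tfrac{\beta}{2}\Delta$ decrease on a descent step, i.e.\ a geometric contraction $\Delta \mapsto (1-\beta/2)\Delta$ of its own gap. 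Because the communication round only ever replaces an instance's iterate by one of strictly smaller objective value, this contraction is inherited by the best iterate, so the number of rounds in which the best iterate improves is at most $\lceil 2\log((f(x_0)-f^\ast)/\epsilon)/\beta\rceil$, matching the descent count of \Cref{thm:OPT-LipschitzGrowth}.

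I would then bound the total number of rounds by partitioning the run into \emph{scales}, each spanning one dyadic level of $\rho^\star$, on which a fixed instance is near-ideal. On each scale that instance behaves exactly like a standalone bundle method run at a constant step within a bounded factor of ideal, and any restart triggered by communication lands it at a lower objective value, which can only improve the sufficient-decrease guarantee. Thus the per-scale round count is controlled by the single-instance null-step estimate of \Cref{thm:OPT-LipschitzGrowth} restricted to that scale's gap range; summing over scales telescopes into a geometric series dominated by the final scale, reproducing the $\frac{1}{1-(1-\beta/2)^{2-2/p}}\,\frac{M^2}{\mu^{2/p}\epsilon^{2-2/p}}$ form, with the constant inflating from $8$ to $64$ because the bracketing step size may sit a bounded factor below ideal. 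The $p=1$ branch is identical except that the per-scale null bound carries the extra $\log$ factor, as in \Cref{thm:OPT-LipschitzGrowth}. Adding the descent-round and null-round counts, and doubling to absorb scale-boundary effects, yields the stated bound; each round costs $J$ oracle calls by construction.

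The hard part will be the bookkeeping that couples the communication step to the shifting near-ideal instance. I must verify that when a scale boundary is crossed and a new instance becomes near-ideal, that instance is already positioned---through earlier communication pulls following its own descent steps---at an objective value close enough to the best that the single-instance null-step estimate applies from the correct current gap rather than from a stale, much larger one. Equivalently, I need to rule out an instance drifting arbitrarily far behind while its step size is mismatched and then being unable to catch up. The monotonicity of each instance's objective value, together with the fact that even a mismatched (too-large) step still produces descent steps that trigger communication pulls toward the best iterate, is what closes this gap; making that argument quantitative, and uniform across the sign change of $1-2/p$, is the delicate point of the proof.
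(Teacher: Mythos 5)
Your architecture is the paper's: dyadic coverage of the ideal stepsize~\eqref{eq:OPTholder-stepsize}, a designated near-ideal instance for each scale of the optimality gap, per-scale null-step counts from Lemma~\ref{lem:null-prox-gap} (with the constant inflated from $8$ to $64$ by the factor-of-$4$ bracketing), and a geometric sum over scales. The problem is that the step you defer as ``the hard part'' is the load-bearing step of the theorem, and you leave it unproved; worse, your framing points in the wrong direction. You want to show that when a scale begins, the newly designated instance $j$ is \emph{already} positioned near the best iterate, so that the null-step estimate applies ``from the correct current gap rather than from a stale, much larger one,'' and you would ``rule out an instance drifting arbitrarily far behind.'' The algorithm guarantees no such thing: an instance whose stepsize has been mismatched for many scales can lag arbitrarily far behind, and ``earlier communication pulls'' cannot prevent this, since a pull only fires after that instance's own descent steps. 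As stated, the quantitative claim you need cannot be established, so your proof does not close.

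The paper's resolution needs neither half of your claim. First, a stale, large gap never hurts the null-step count: Lemma~\ref{lem:null-prox-gap} requires only a \emph{lower} bound on the designated instance's proximal gap, and~\eqref{eq:prox-gap-bound-growth} gives $\Delta^{(j)}_k \ge (1-\beta/2)^{-n}\epsilon/2$ whenever instance $j$'s own gap exceeds the scale value $(1-\beta/2)^{-n}\epsilon$, no matter how far above it sits (check both branches; the sign of $1-2/p$ only decides which branch is active). Hence every run of consecutive null steps of instance $j$ during scale $n$ has length at most $64M^2/\left((1-\beta)^2\mu^{2/p}((1-\beta/2)^{-n}\epsilon)^{2-2/p}\right)$. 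Second, catch-up costs exactly one descent step, because the communication round fires immediately after any descent step of instance $j$: on descent steps one has the recurrence $\delta^{(j)}_{k+1} \le \min\{(1-\beta/2)\delta^{(j)}_k,\ \delta_k\}$, where $\delta_k$ is the best gap, the first term coming from Lemma~\ref{lem:descent-prox-gap} and the second from the communication pull. The second term of the min puts instance $j$ at or below $(1-\beta/2)^{-(n+1)}\epsilon$ after its first descent step inside the scale, wherever it started, and the first term then pushes it past $(1-\beta/2)^{-n}\epsilon$ after its second. Two descent steps per scale, each preceded by a bounded null-step run, is the entire coupling argument---it is exactly the origin of the leading factor $2$ that you attributed vaguely to ``scale-boundary effects''---and it is uniform in the sign of $1-2/p$.
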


  {We reiterate that the total number of oracle calls required by the parallel method is given by the product between number of iterations and $J$. This limits the practicality for applications where oracle costs are expensive or cannot be easily parallelized.}

  Nonetheless, these rates nearly match the optimal lower bounds for nonsmooth Lipschitz optimization, up to small constants and an additive logarithmic term when $p>1$. For example, under quadratic growth when $p=2$, selecting $\bar \rho  \leq \mu/4$ and $J \geq \log_2(\mu/4\bar{\rho})$, an $\epsilon$-minimizer is found within
  $$ O\left(\frac{M^2}{\mu\epsilon}\right)$$
  iterations (each utilizing $J$ subgradient evaluations, which in this case, is constant with respect to $\epsilon$).
  Under sharp growth $p=1$, selecting $\bar \rho  \leq \frac{\mu^2}{4(f(x_0)-f(x^*))}$ and $J \geq \log_2(\mu^2/4\bar{\rho}\epsilon)$ yields a convergence rate of
  $$ O\left(\frac{M^2}{\mu^2}\log(1/\epsilon)\right) $$
  with each step utilizing a logarithmic number $J=O(\log(1/\epsilon))$ of subgradient evaluations.
  Critically, these convergence rates only depend on $\bar \rho$ and $J$ through the (parallelizable) cost of updating the $J$ bundle method instances at each iteration.


\section{Numerical experiments}\label{sec:numerics}

In this section, we present three examples {with a focus on illustrating our theory for the bundle method rather than on the numerical practicality of any proposed stepsize scheme}. These experiments were implemented in Julia; see \url{https://github.com/mateodd25/proximal-bundle-method} for the source code. For all the numerical experiments, we use a finite-memory model function \eqref{eq:finite-memory}. The reason is two-fold: first, we want to illustrate our theoretical efficiency rates, which are independent of the model $f_{k}$, and, second,
  with this choice, the subproblem \eqref{eq:zDef} has a closed-form solution, thus reducing the computational footprint.

\subsection{Sharp linear regression}

\begin{figure}
	\centering
	\begin{subfigure}[b]{0.43\textwidth}
		\centering
		\includegraphics[width=\textwidth]{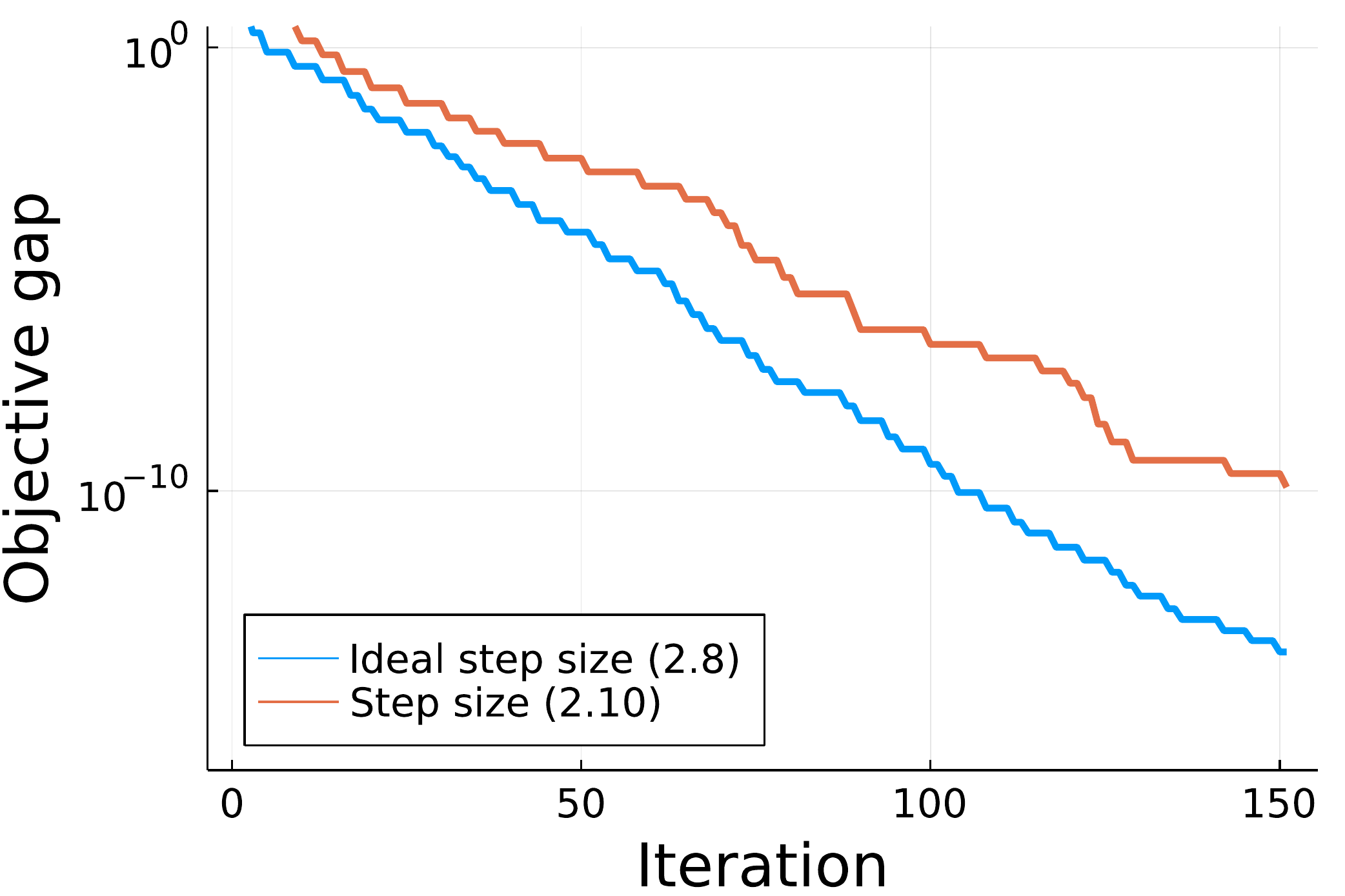}
	\end{subfigure}
	\hfill
	\begin{subfigure}[b]{0.43\textwidth}
		\centering
		\includegraphics[width=\textwidth]{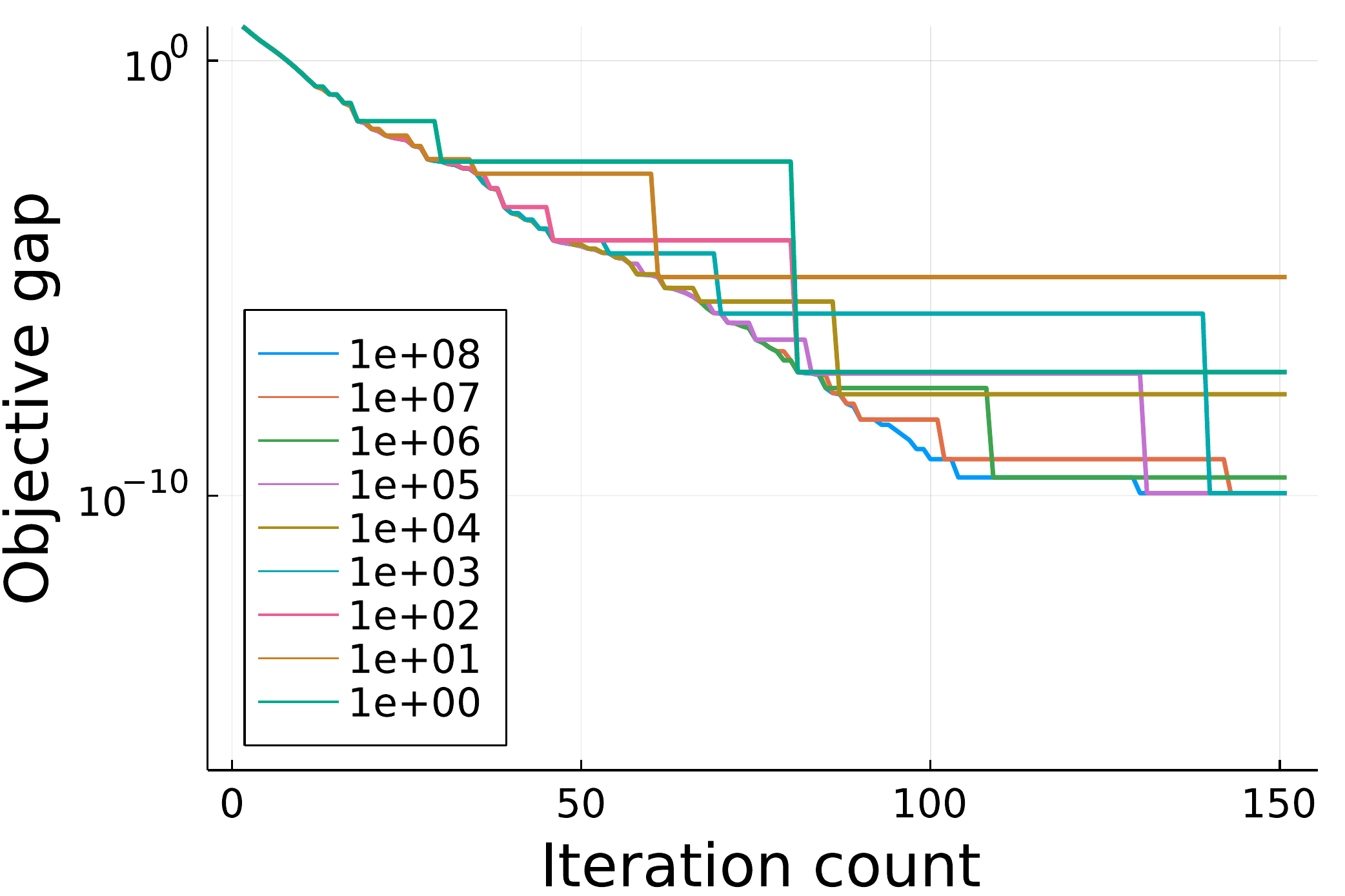}
	\end{subfigure}
	\caption{Objective gap against iteration count: using ideal stepsize \eqref{eq:ideal-stepsize} (left) and using the parallel bundle method (right), plotting each instance deployed with stepsizes from $10^0,\dots,10^8$.}
	\label{fig:objectives}
\end{figure}

\begin{figure}
	\centering
	\begin{subfigure}[b]{0.43\textwidth}
		\centering
		\includegraphics[width=\textwidth]{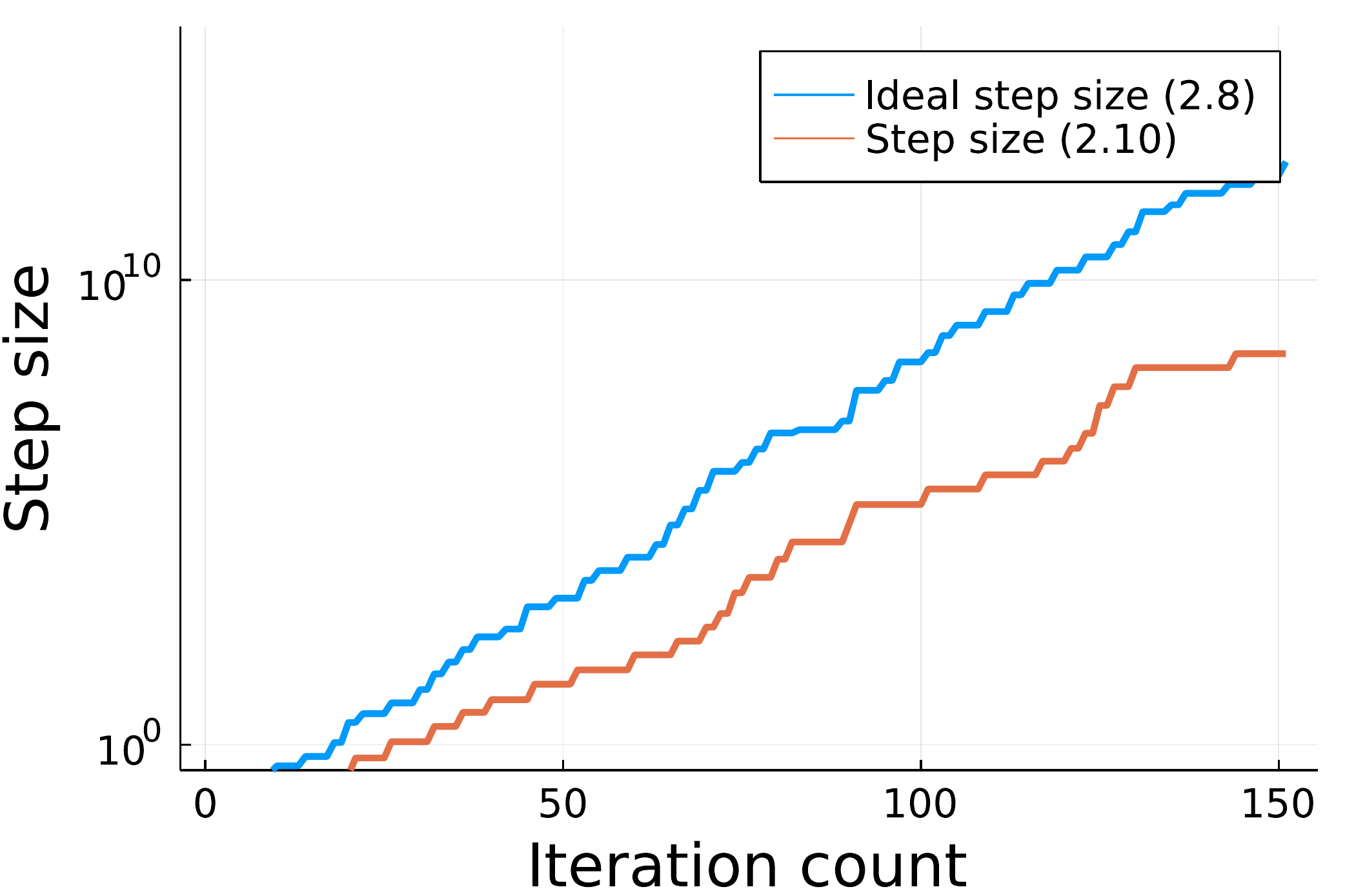}
	\end{subfigure}
	\hfill
	\begin{subfigure}[b]{0.43\textwidth}
		\centering
		\includegraphics[width=\textwidth]{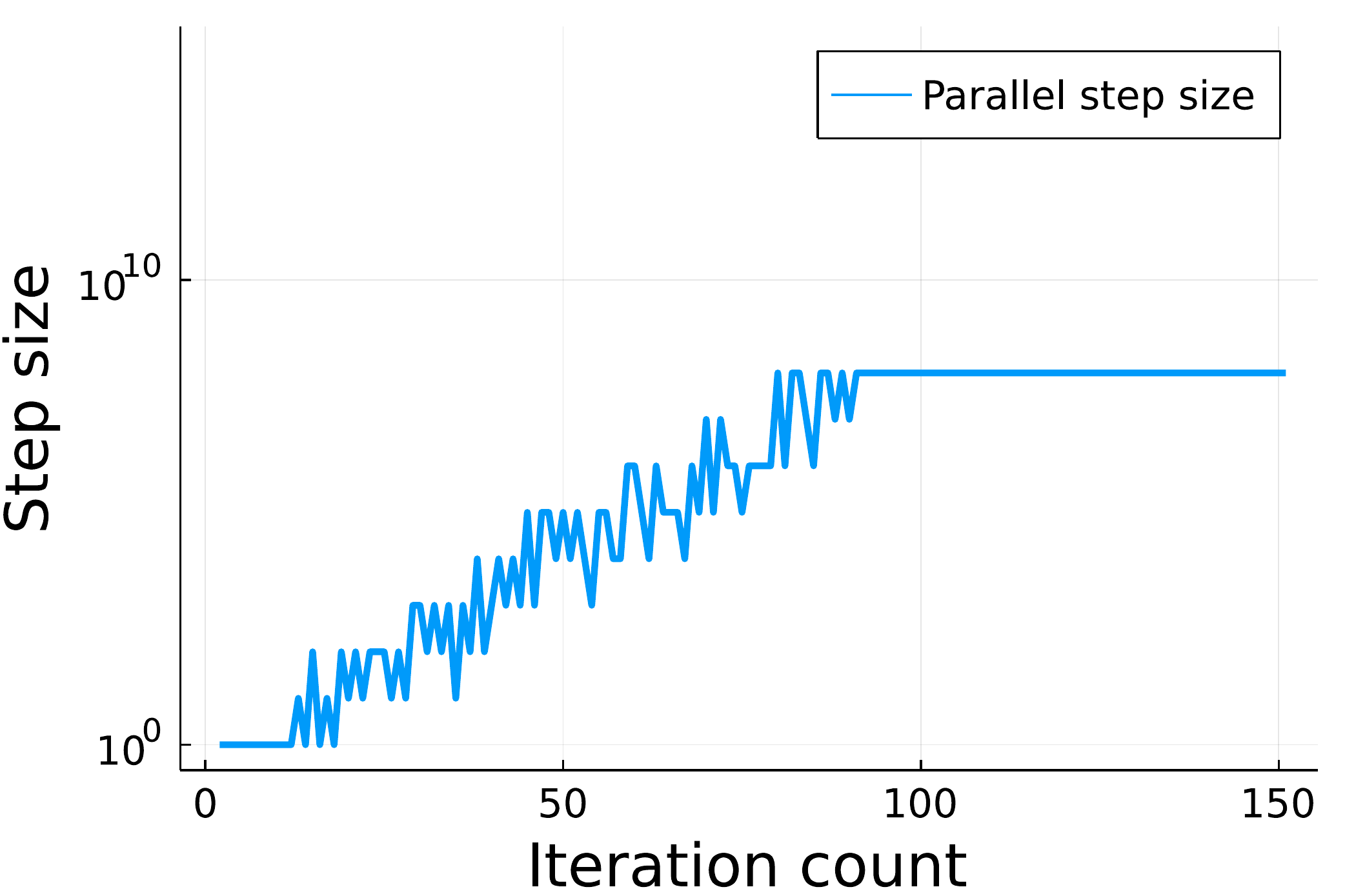}
	\end{subfigure}
	\caption{Stepsize against iteration count: using ideal stepsize \eqref{eq:ideal-stepsize} (left) and using the parallel bundle method (right).}
	\label{fig:stepsizes}
\end{figure}

The first experiment aims to exemplify the fast convergence of the bundle method under sharp growth. We consider a simple linear regression problem of the form
$$ \min_{x \in \RR^d} f(x) :=  \|Ax -b\|$$
where $A \in \RR^{n \times d}$ is a matrix and $b = A x^\star$ for a fixed $x^\star$. This problem is equivalent to the classic least-squares problem after taking squares. Yet, without the square it is well known that for Gaussian matrices, $(A)_{ij} \sim N(0, \frac{1}{\sqrt{n}})$, this function is sharp and Lipchitz continuous provided $n$ is large enough \cite{charisopoulos2021low}.

We generate a random Gaussian matrix $A \in \RR^{100 \times 50}$ and random solution $x^\star \sim N(0, I_d)$. We run three algorithms: two proximal bundle methods with step sizes \eqref{eq:ideal-stepsize} and \eqref{eq:OPTholder-stepsize}, respectively, and the parallel bundle method described in Section \ref{sec:parallel-bundle}. The step sizes \eqref{eq:ideal-stepsize} and \eqref{eq:OPTholder-stepsize} are impractical since they require knowing the minimum value, and the sharpness constant. However, the theoretical analysis shows that they yield optimal convergence rates, and so we use them as a baseline.
 The parallel bundle method uses nine parallel instances with step sizes in $\rho \in \{1, 10, \dots,10^8\}.$ We let both methods run for $150$ iterations.

Figure \ref{fig:objectives} displays the best objective gap $f - \min f$ so far against the iteration count for the three methods. On the other hand, Figure \ref{fig:stepsizes} shows the step size used at each iteration. For the parallel bundle method, we display the step size used by the last instance to reduce the best objective value seen.
As the theory predicts the convergence of all three methods is linear. The bundle method with step size \eqref{eq:ideal-stepsize} exhibits steady progress and reaches an objective gap of $10^{-15}$, while the parallel version slows down around $100$ iterations and only achieves a loss of gap of $10^{-10}$. This behavior is explained by the step size plots. The impractical step sizes behave like $\mu^{2}/(f(x_{k})-\min f),$ and so they keep increasing as the iterates converge. Figure~\ref{fig:stepsizes} plots how the parallel algorithm roughly emulates \eqref{eq:ideal-stepsize}, until it exceeds the maximum step size that the parallel bundle method can use, i.e., $10^8$. After which, the instance with step size $10^8$ consistently leads the method's progress, albeit sublinearly. {The fact that the parallel method adapted to follow a stepsize scheme similar to~\eqref{eq:ideal-stepsize} provides numerical support for our proposed ideal stepsize.}

\begin{wrapfigure}{l}{5.6cm}
  \includegraphics[width=5.6cm]{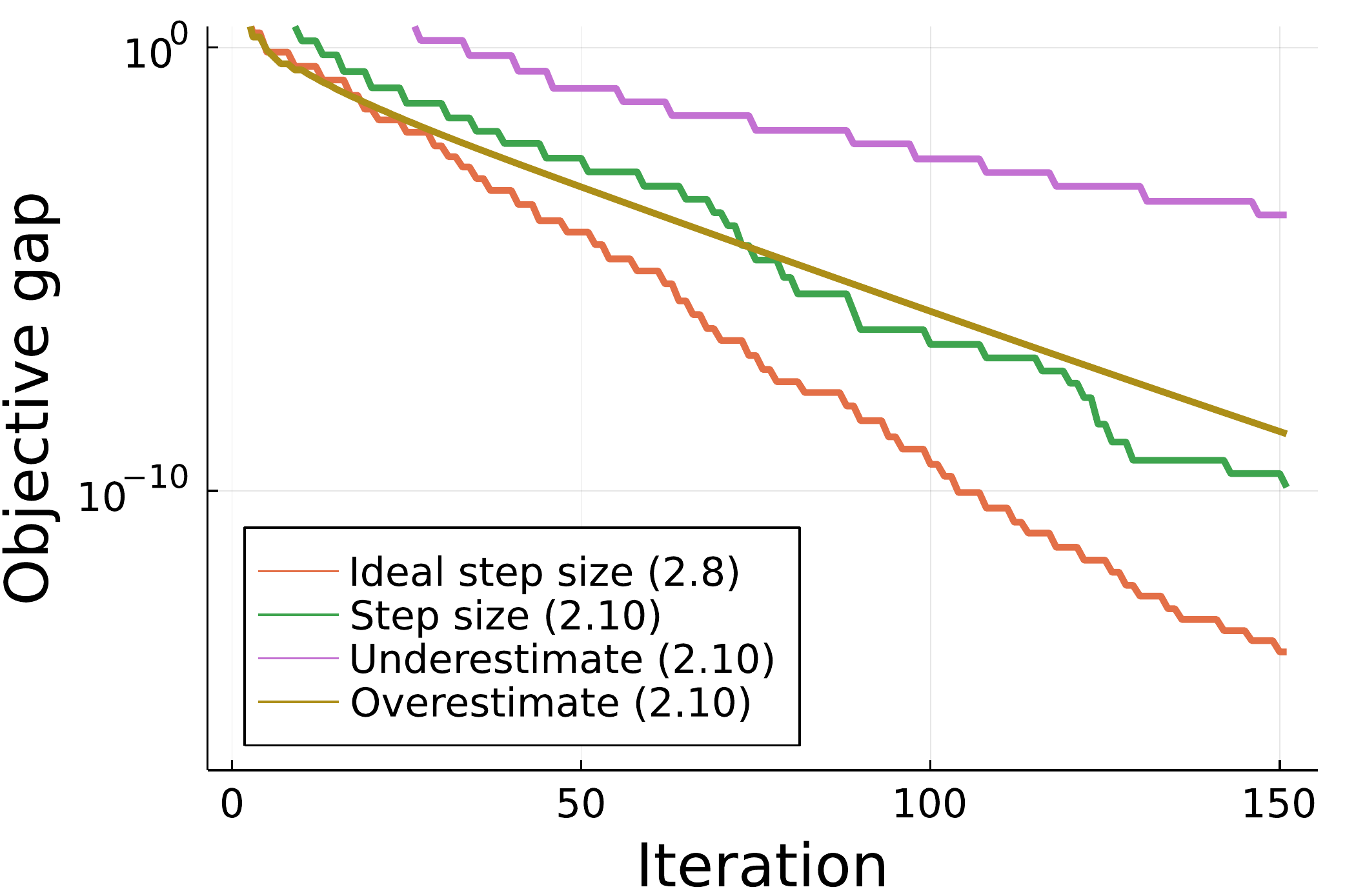}
  \caption{Objective gap with misspecified sharpness constant. See text for description.}\label{fig:misspecification}
  \vspace{-15pt}
\end{wrapfigure}

\paragraph{Parameter misspecification.} \ {The theoretical analysis suggests that the convergence of proximal bundle methods degrades gracefully with the misspecification of the growth constants; see the discussion after Lemma~\ref{lem:prox-gap-bound} below. To corroborate this, we perform the same sharp regression experiment, but we underestimate by a factor of $1/3$ and overestimate by a factor of $3$ the sharpness constant. We use these estimates with step size \eqref{eq:OPTholder-stepsize} and compare them against the previous executions with the correct estimate and the ideal step size \eqref{eq:ideal-stepsize}.
Figure~\ref{fig:misspecification} displays the results: even with a misspecified $\mu$, the method still exhibits linear convergence, although at a slower rate.}

\subsection{Support Vector Machine}

To illustrate the adaptive features of the parallel bundle method we consider the standard Support Vector Machine (SVM) formulation: we are giving datapoints $(x_1, y_1), \dots (x_n, y_n)$ with $x_i \in \RR^d$ and $y_i \in \{\pm 1\}$ and our goal is to solve
\begin{equation}
\label{eq:3}
\min_{w \in \RR^d} \frac{1}{n} \sum \max\left\{0, 1 - y_i\dotp{w, x_i}\right\} + \frac{\lambda}{2} \|w\|^2
\end{equation}
where $\lambda \in \RR$ is a fixed constant. This problem is not smooth due to the first term. For this experiment we compare against a subgradient method based on Pegasos \cite{shalev2011pegasos}, a state-of-the art solver for SVM. Our vanilla implementation of the parallel bundle method is not tuned for efficiency and does not aim to be competitive with commercial solvers. Instead, we aim to show that an out-of-the-box implementation is immediately comparable to a specialized first-order method for this problem.

We generate SVM problems using three datasets from the LIBSVM Binary Classification Database \cite{libsvm}. In particular, we use \texttt{colon-cancer}, \texttt{duke}, and \texttt{leu}.\footnote{We refer the reader to LIBSVM for the origin of each of these datasets.} We preprocess the data by deleting empty features, normalizing the features, and adding an extra component $x_k = (x_k, 1)$ to allow for affine functions.

The implementation of the subgradient algorithm updates
$$w_{k+1} \leftarrow (1-\eta_k \lambda)w_k + \eta_k \sum_{i=1}^{n}\mathbf{1}\{1 \leq y_i\dotp{w_k, x_i}\} y_i x_i$$
where $\eta_k = \frac{1}{\lambda k}$ and $\mathbf{1}\{\cdot\}$ is one if $\cdot$ holds true and zero otherwise. This is analogous to Pegasos with the exception that it does full instead of stochastic subgradient evaluations. Knowledge of $\lambda$ is necessary for the implementation of this method.

\begin{figure}
	\centering
	\begin{subfigure}[b]{0.3\textwidth}
		\centering
		\includegraphics[width=\textwidth]{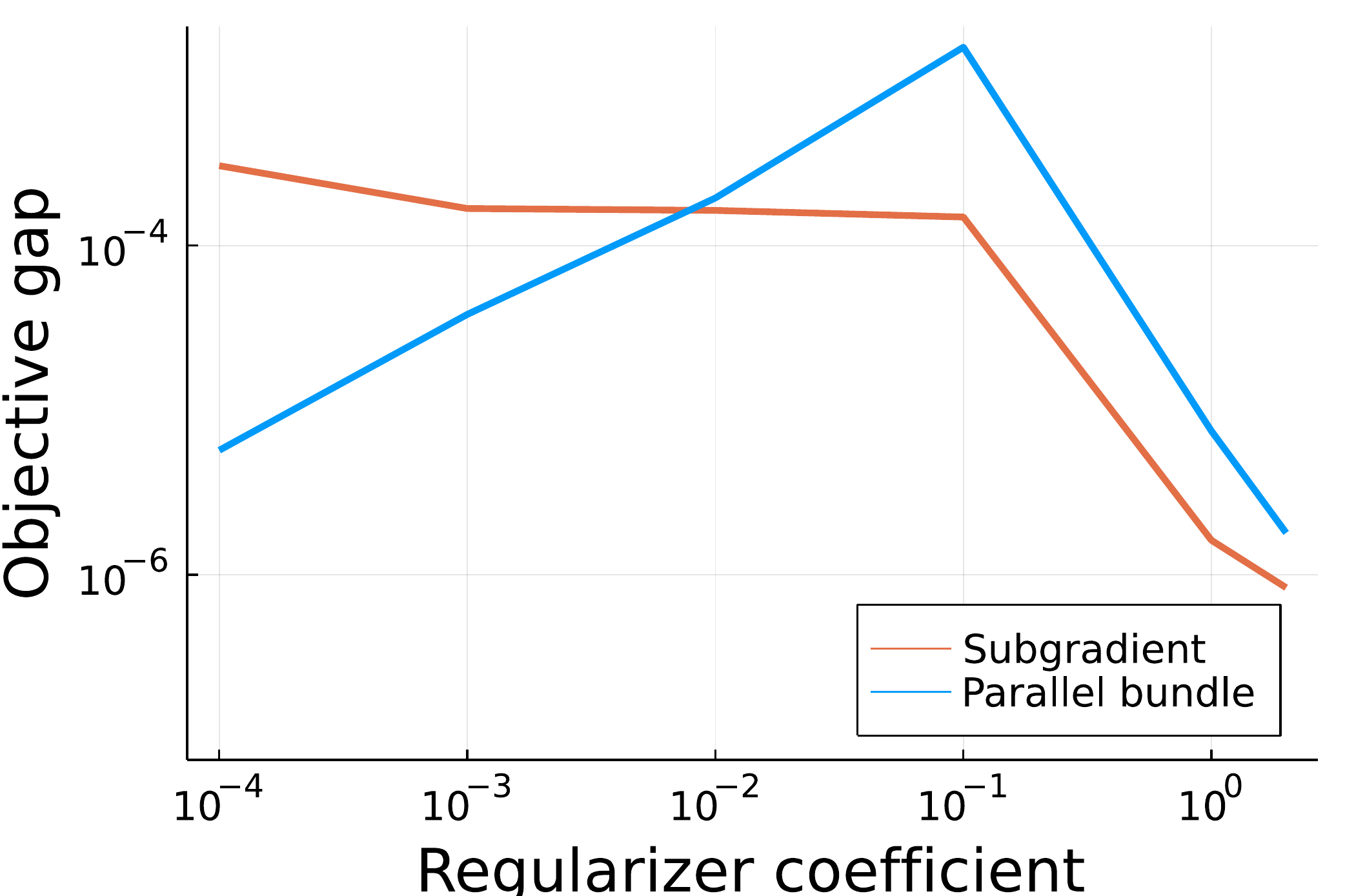}
	\end{subfigure}
	\hfill
	\begin{subfigure}[b]{0.3\textwidth}
		\centering
		\includegraphics[width=\textwidth]{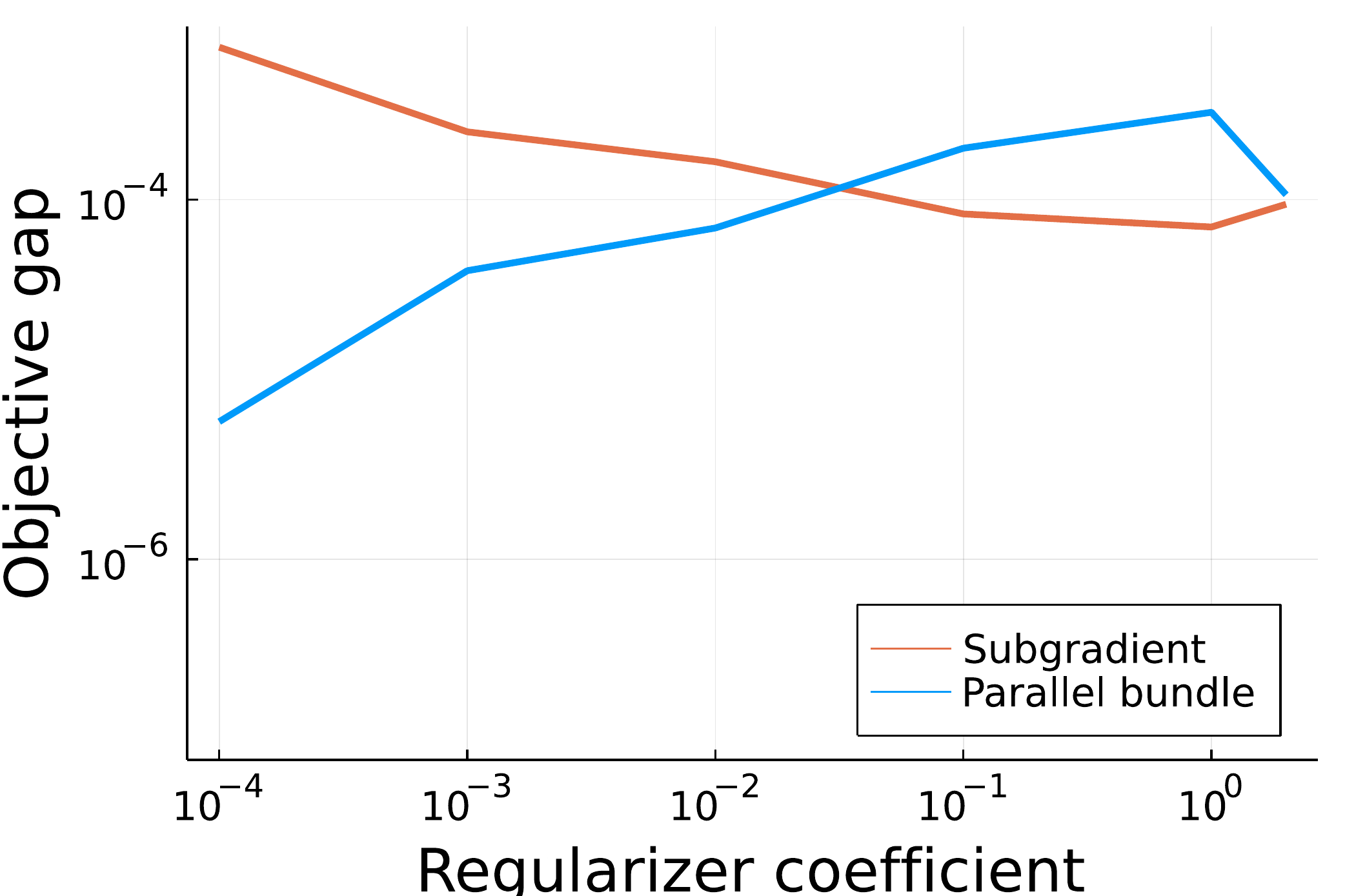}
	\end{subfigure}
	\hfill
	\begin{subfigure}[b]{0.3\textwidth}
		\centering
		\includegraphics[width=\textwidth]{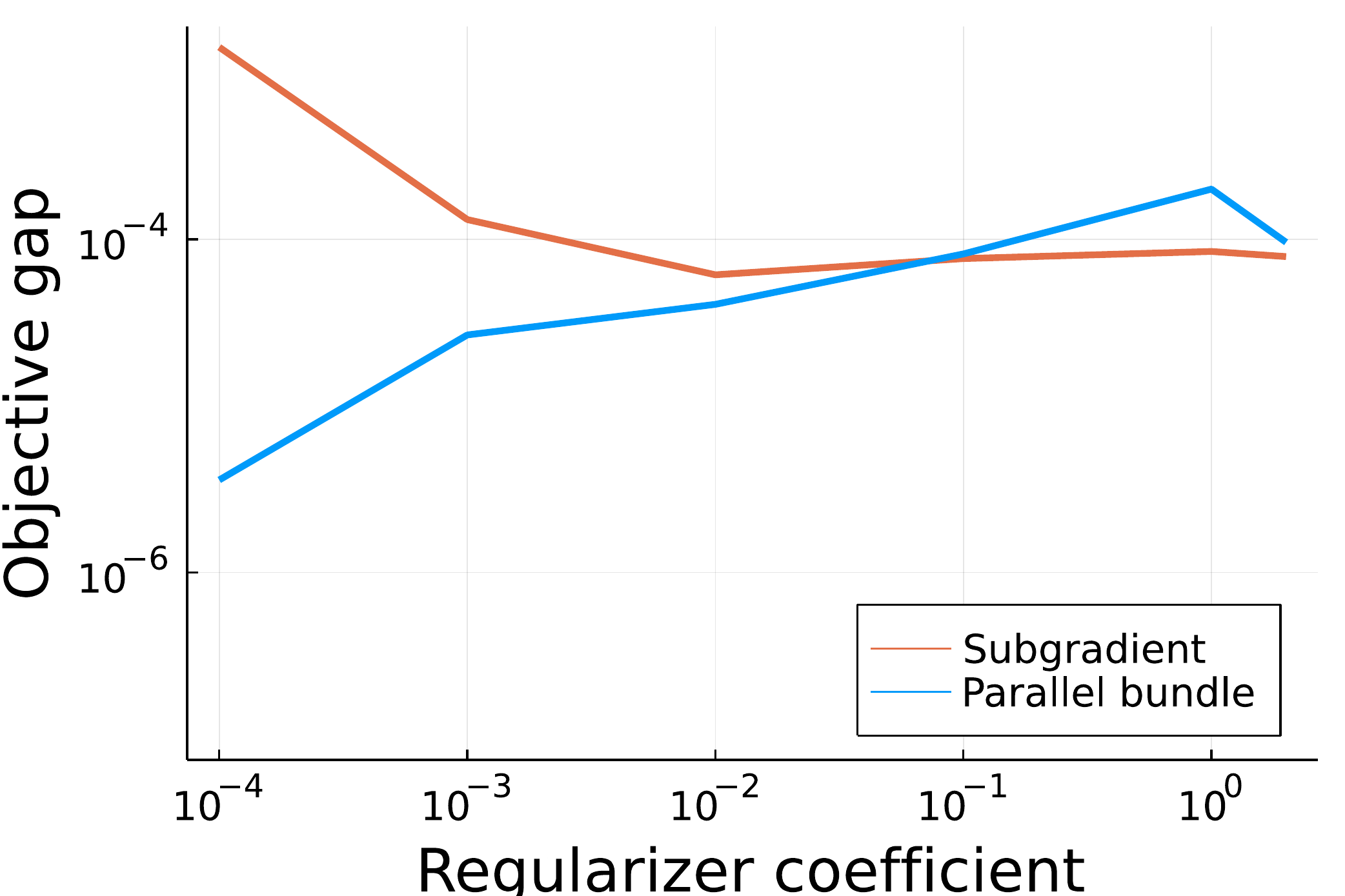}
	\end{subfigure}
	\caption{Log-log plot of best objective gap against coefficient $\lambda$ for the three problems, solved by a subgradient method and by the parallel bundle method: \texttt{colon-cancer} (left), \texttt{duke} (center), and \texttt{leu} (right); see text for details.}
	\label{fig:svm}
\end{figure}

For the parallel bundle method, we use stepsizes three instances with constant stepsizes $\rho\in \{10^{-9} \cdot 10^{4j} \mid j = 0, 1, 2\}.$ We run the bundle method for $2000$ iterations and the subgradient method for $6000$ thousand iterations, that way both methods make the same number of oracle calls. We measure the best objective gap $ f(x_{k}) - \min f$ so far. To compute the minimum we use Gurobi with accuracy set to $10^{-10}$. Figure \ref{fig:svm} plots the gap against regularizer coefficient $\lambda$; we consider $\lambda \in \{10^{-4},10^{-3}, 10^{-2}, 10^{-1}, 1.0, 2.0\}$.

In this simple setting, the parallel bundle method out-of-the-box performs similarly to the tuned subgradient method without the need of function related information. We see that the parallel method can handle a wide range of parameters $\lambda$ with a scarce set of potential stepsizes. Notice that while for small $\lambda$ the performance of the subgradient method tends to deteriorate, the performance of the bundle method improves.

  \subsection{Log-Sum-Exp function}

  \begin{figure}
	\centering
	\begin{subfigure}[b]{0.3\textwidth}
		\centering
		\includegraphics[width=\textwidth]{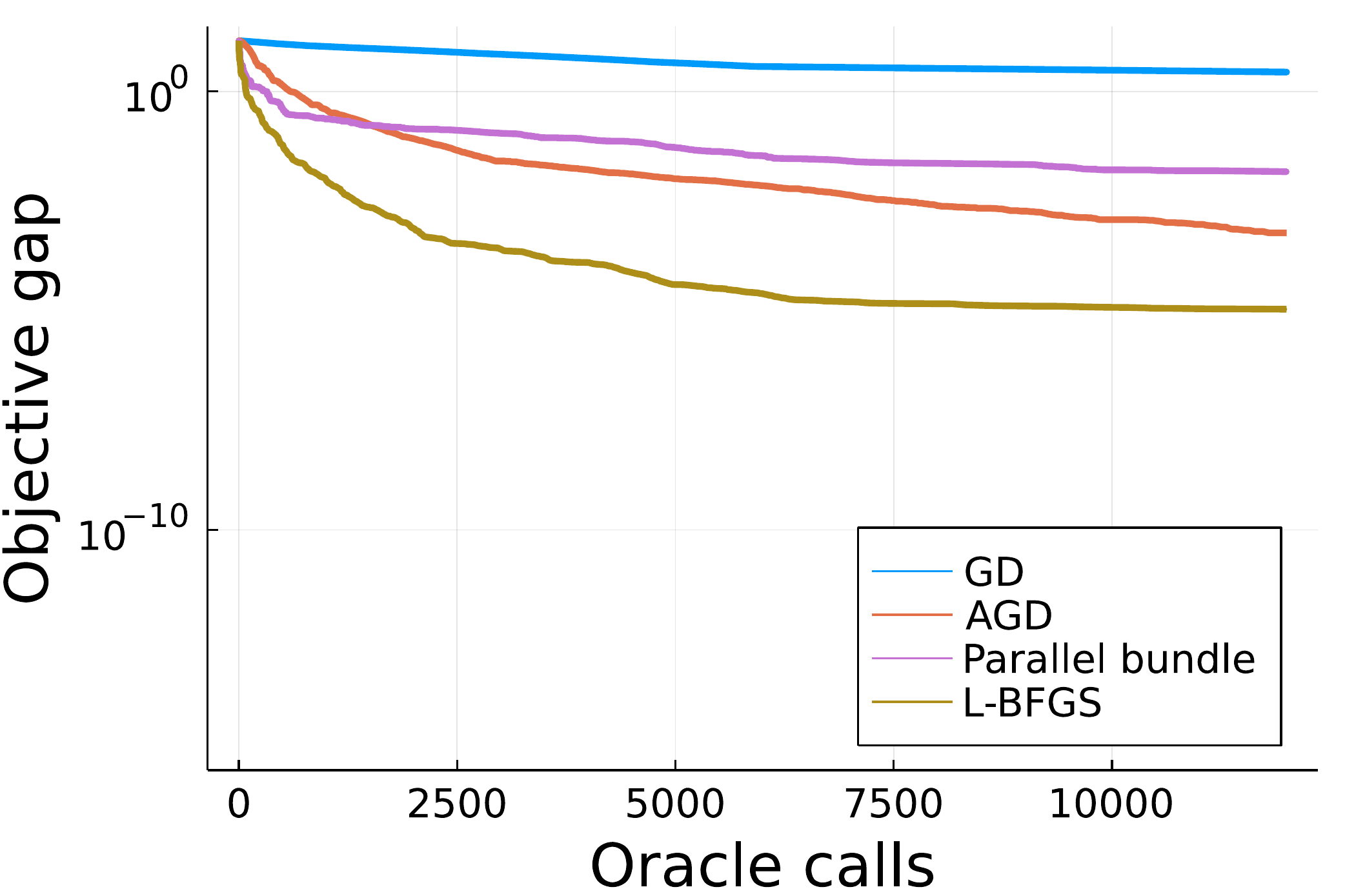}
	\end{subfigure}
	\hfill
	\begin{subfigure}[b]{0.3\textwidth}
		\centering
		\includegraphics[width=\textwidth]{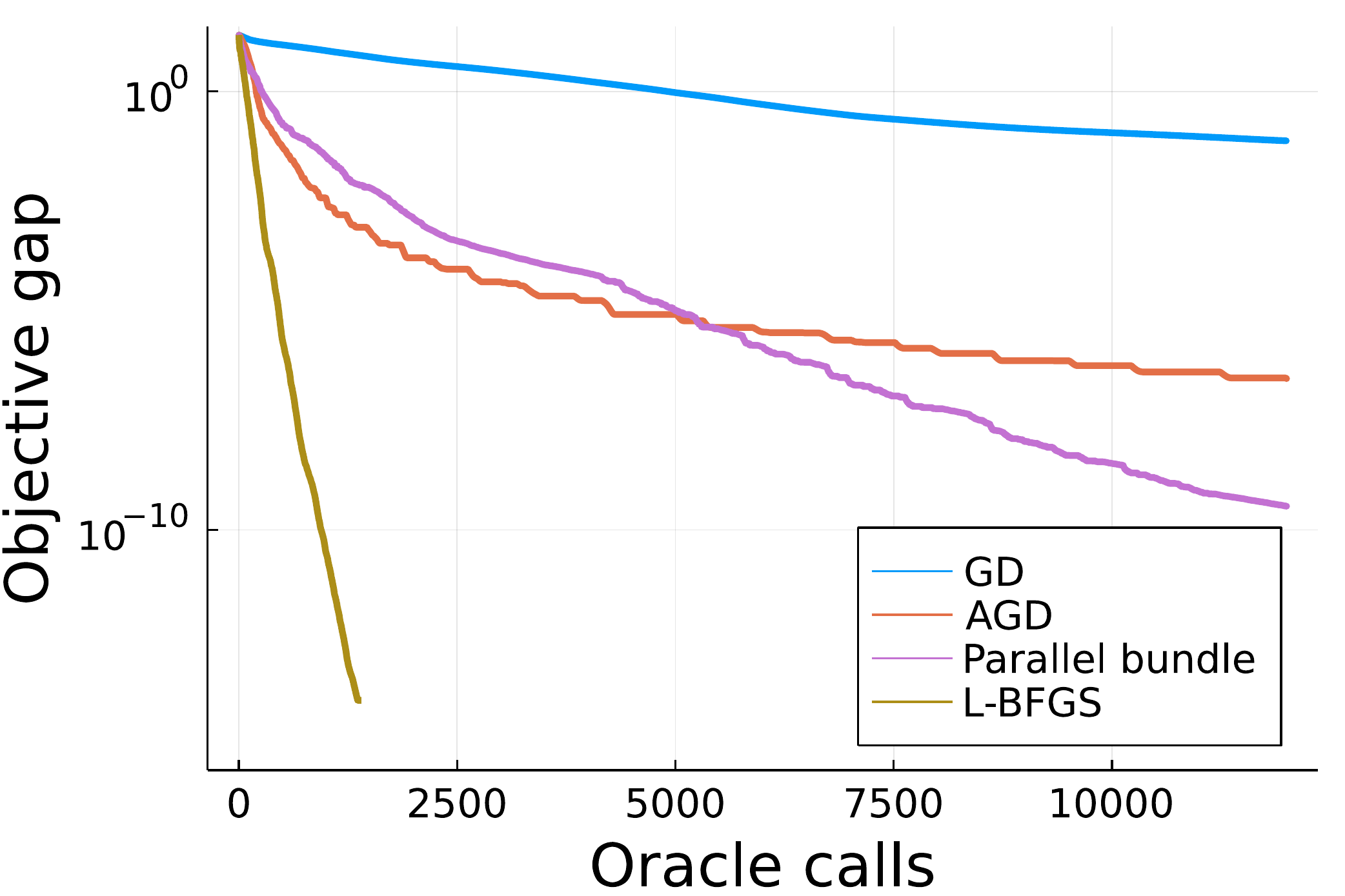}
	\end{subfigure}
	\hfill
	\begin{subfigure}[b]{0.3\textwidth}
		\centering
		\includegraphics[width=\textwidth]{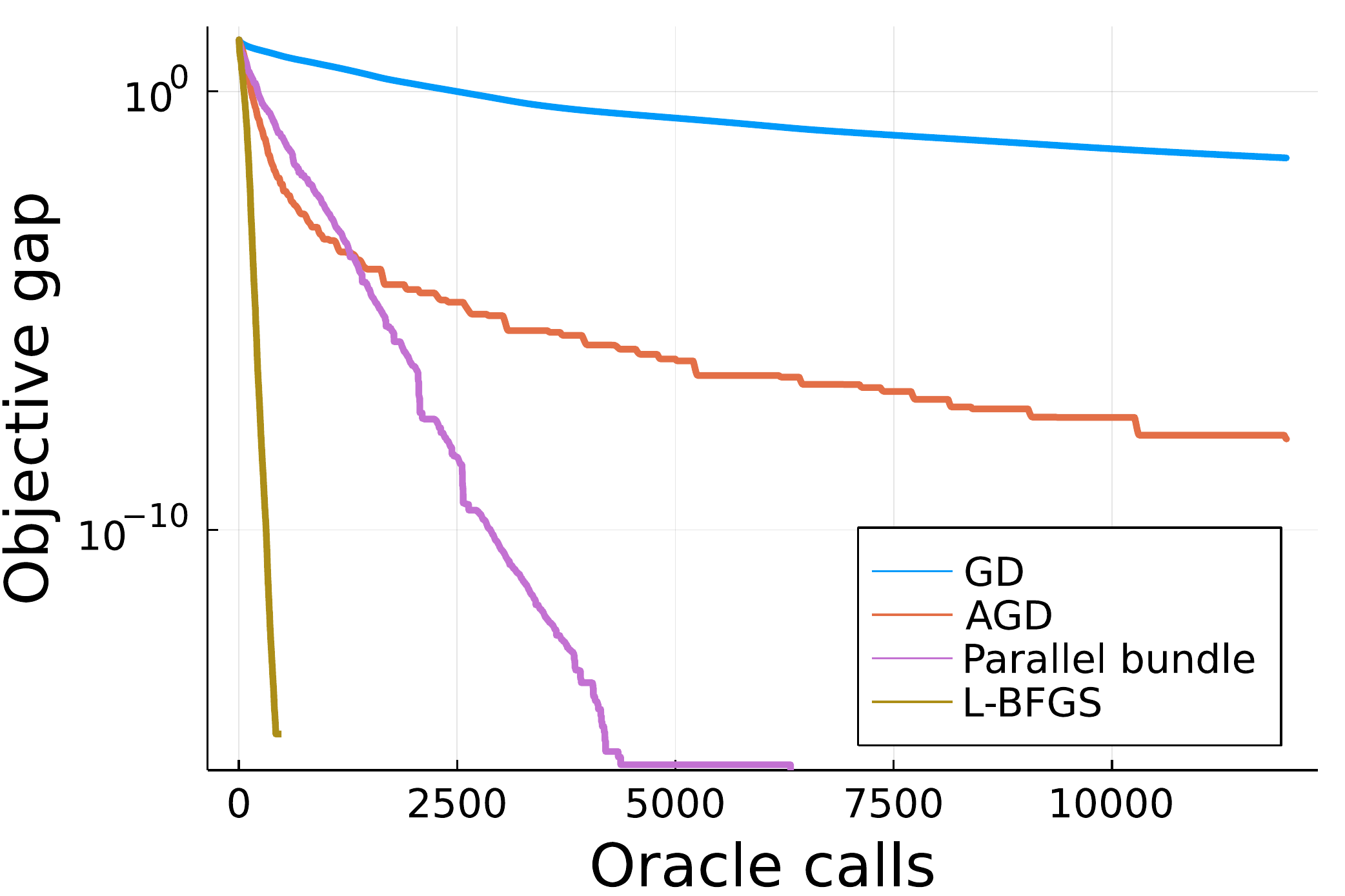}
	\end{subfigure}
	\caption{Best objective gap against number of oracle calls for the Log-Sum-Exp problem. Results for different parameters $\gamma$: $0.01$ (left), $0.05$ (middle), and $0.08$ (right).}
	\label{fig:softmax}
\end{figure}
In this section, we aim to illustrate the performance of the bundle method on smooth problems. We consider a soft-max problem of the form:
$$
\argmin_{x \in \RR^{d}} f(x; A, b) := \gamma \log\left(\sum_{i= 1}^{n}\exp\left(\frac{\dotp{a_{i}, x} - b_{i}}{\gamma}\right)\right).
$$
where $a_{i}$ is the $i$th column of the matrix $A$. This problem lacks strong convexity and becomes ``stiffer'' as $\gamma$ approaches zero. We generate random instances as follows: we let $b_{i}$ be i.i.d. random variables with distribution $\text{Unif} [-1, 1]$, similarly, we draw a random matrix $\widehat A$ with i.i.d. entries $(\widehat a_{i})_{j} \sim \text{Unif} [-1, 1]$ and set $a_{i} = \widehat a_{i} - \nabla f(0; \widehat A, b).$ This choice ensures that zero is a minimizer of $f(\cdot; A, b).$

We fix $d = 100$ and $n = 6 d$ and vary the parameter $\gamma \in \{0.01, 0.05, 0.08\}.$ We test four algorithms: Gradient Descent(GD), Accelerated Gradient Descent (AGD), Limited-Memory Broyden–Fletcher–Goldfarb–Shanno algorithm (L-BFGS) \cite{byrd1994representations}, and the parallel bundle method.  We use the open source implementation of GD, AGD, and L-BFGS provided by \texttt{Optim.jl} \cite{mogensen2018optim}. We set the memory of L-BFGS to ten and use four instances, with stepsizes $\rho \in \{10^{-3}, 10^{{-2}}, 10^{-1}, 1\},$ for the parallel bundle method. This way, L-BFGS and the bundle method have a comparable memory footprint. GD and AGD use the theoretically sound step size $0.9 / L$, where $L$ is the smoothness constant of $f$. While L-BFGS uses backtracking line search \cite[Section 3.5]{NW}.


Figure \ref{fig:softmax} plots the best seen objective gap $f - \min f$ against the number of oracle calls for the different values of $\mu$. L-BFGS is faster than all the other algorithms in all scenarios; this is somewhat expected since L-BFGS is widely recognized for its superior practical performance. {Nonetheless,
the parallel bundle method consistently outperforms GD and is often competitive with AGD, despite its very real practical drawback of utilizing $J=4$ oracle calls per iteration.}
It would be interesting to explore whether one can accelerate the bundle method to make it competitive with AGD when $\mu$ is small. This is left as an intriguing question for future research.

\section{Analysis}\label{sec:theory-proofs}

In this section, we develop the proofs of the convergence rates. We start by introducing the general strategy that we use to establish all of our results and then specialize it to each scenario.
\subsection{Analysis Overview and Proof Sketch}
Each iteration of the bundle method can be viewed as an attempt to mimic the proximal point method, using the model $f_k$ instead of the true objective function $f$. At each iteration $k$, we denote the objective gap of the proximal subproblem, called the {\it proximal gap}, by
$$ \Delta_k := f(x_k) - \left(f(\bar{x}_{k+1}) + \frac{\rho_k}{2}\|\bar{x}_{k+1} - x_k\|^2\right)$$
where $\bar{x}_{k+1} = \argmin_{x\in\RR^d} \left\{f(x) + \frac{\rho_k}{2}\|x-x_k\|^2\right\}$.

Regardless of which continuity, smoothness and growth assumptions are made, our analysis works by relating the proximal steps computed by the bundle method on the models $f_k$ to proximal steps on $f$. The following pair of observations show that the behavior on both descent steps and null steps is controlled by the proximal gap $\Delta_k$.
\begin{itemize}
\item[(i)] {\bf Descent steps attain decrease proportional to the proximal gap.}
  \begin{lemma}\label{lem:descent-prox-gap}
    A descent step, at iteration $k$, has
    \begin{equation} \nonumber
      f(x_{k+1}) \leq f(x_k) - \beta\Delta_k.
    \end{equation}
  \end{lemma}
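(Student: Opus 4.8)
The plan is to combine the descent-step acceptance test with the fact that every model $f_k$ underestimates $f$. Since iteration $k$ is a descent step, we have $x_{k+1} = z_{k+1}$ and the acceptance inequality $\beta(f(x_k) - f_k(z_{k+1})) \leq f(x_k) - f(z_{k+1})$ holds. Rearranging this immediately gives
\[ f(x_{k+1}) = f(z_{k+1}) \leq f(x_k) - \beta\bigl(f(x_k) - f_k(z_{k+1})\bigr). \]
Because $\beta > 0$, it therefore suffices to show that the model-predicted decrease $f(x_k) - f_k(z_{k+1})$ is at least the true proximal gap $\Delta_k$.

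To lower bound $f(x_k) - f_k(z_{k+1})$, I would exploit the minorant property $f_k \leq f$ guaranteed by Assumption~\ref{ass:main}. Adding the common regularizer $\tfrac{\rho_k}{2}\|\cdot - x_k\|^2$ preserves this inequality pointwise, so minimizing over $z$ yields
\[ f_k(z_{k+1}) + \tfrac{\rho_k}{2}\|z_{k+1}-x_k\|^2 = \min_z\Bigl\{f_k(z) + \tfrac{\rho_k}{2}\|z-x_k\|^2\Bigr\} \leq \min_z\Bigl\{f(z) + \tfrac{\rho_k}{2}\|z-x_k\|^2\Bigr\} = f(\bar{x}_{k+1}) + \tfrac{\rho_k}{2}\|\bar{x}_{k+1}-x_k\|^2, \]
where the first equality is the definition of $z_{k+1}$ and the last is the definition of $\bar{x}_{k+1}$. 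Dropping the nonnegative term $\tfrac{\rho_k}{2}\|z_{k+1}-x_k\|^2$ on the left then gives $f_k(z_{k+1}) \leq f(\bar{x}_{k+1}) + \tfrac{\rho_k}{2}\|\bar{x}_{k+1}-x_k\|^2$, which is precisely the statement $f(x_k) - f_k(z_{k+1}) \geq \Delta_k$ after subtracting both sides from $f(x_k)$. Substituting into the displayed descent inequality closes the argument.

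The only genuinely substantive step is the middle one, comparing the model's regularized minimum with the true regularized minimum; everything else is bookkeeping. The key conceptual point is that the acceptance test is phrased in terms of the model decrease $f(x_k) - f_k(z_{k+1})$, whereas the downstream analysis needs a guarantee in terms of the true proximal gap $\Delta_k$, and the minorant property is exactly what bridges the two. I expect no difficulty here, but it is worth emphasizing that no growth, smoothness, or Lipschitz hypothesis enters this lemma, which is consistent with it serving as the common backbone for every regime in Table~\ref{tab:bundle-rates}.
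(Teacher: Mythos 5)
Your proposal is correct and follows essentially the same route as the paper's proof: both use the descent-step acceptance test to reduce the claim to $f(x_k) - f_k(z_{k+1}) \geq \Delta_k$, and both establish that bound from the minorant property $f_k \leq f$ together with the fact that $z_{k+1}$ minimizes the regularized model subproblem. The only cosmetic difference is that you compare the two subproblem minima directly, whereas the paper evaluates the model subproblem at $\bar{x}_{k+1}$; these are the same argument.
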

\item[(ii)] {\bf The number of consecutive null steps is bounded by the proximal gap.}
  \begin{lemma}\label{lem:null-prox-gap}
    A descent step, at iteration $k$, followed by $T$ consecutive null steps has at most
    \begin{equation}\nonumber
      T\leq \frac{8G_{k+1}^2}{(1-\beta)^2\rho_{k+1}\Delta_{k+T}}
    \end{equation}
    where $G_{k+1}=\sup \{\|g_{t+1}\| \mid k\leq t \leq k+T\}$. This simplifies to
    $$ T \leq \begin{cases}
      \dfrac{2M^2}{(1-\beta)^2\rho_{k+1}\Delta_{k+T}} & \text{ if $f$ is $M$-Lipschitz, or}\\
      \dfrac{16(L+\rho_{k+1})^3}{(1-\beta)^2\rho^3_{k+1}} & \text{ if $f$ is $L$-smooth} \ .
    \end{cases} $$
  \end{lemma}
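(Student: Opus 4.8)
The plan is to track the optimal value of the \emph{model} proximal subproblem across the block of null steps and show it increases by a definite amount at each step while remaining below $f$ evaluated at the frozen anchor. Throughout the block the iterate is frozen at $x:=x_{k+1}$ and, by \eqref{eq:parameter-change}, the stepsizes are nondecreasing, $\rho_{k+1}\le\rho_{k+2}\le\cdots\le\rho_{k+T}$. For a null step $i\in\{k+1,\dots,k+T\}$ I write $\phi_i := \min_z\{f_i(z)+\tfrac{\rho_i}{2}\norm{z-x}^2\} = f_i(z_{i+1})+\tfrac{\rho_i}{2}\norm{z_{i+1}-x}^2$ and set $g_i := f(x)-\phi_i$. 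Since $f_i\le f$ by \eqref{eq:model-lowerBound}, minimizing the regularized $f_i$ gives a value no larger than minimizing the regularized $f$, so $\phi_i\le f(x)-\Delta_i$, i.e.\ $g_i\ge\Delta_i$; and because $\Delta_i$ is nonincreasing in $\rho_i$ while $\rho_i$ is nondecreasing, $g_i\ge\Delta_i\ge\Delta_{k+T}$ for every $i$ in the block.

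First I would extract progress from the null-step test. A null step at $i$ means the descent inequality fails, which rearranges to $\eta_i := f(z_{i+1})-f_i(z_{i+1}) \ge (1-\beta)(f(x)-f_i(z_{i+1})) \ge (1-\beta)g_i$. The heart of the proof is to convert this gap $\eta_i$ into an increase of $\phi$. I lower bound the next model-prox objective $\psi_{i+1}(z):=f_{i+1}(z)+\tfrac{\rho_{i+1}}{2}\norm{z-x}^2$ by two minorants. Using the aggregate bound \eqref{eq:model-subgradfk} together with $s_{i+1}=\rho_i(x-z_{i+1})$ and $\rho_{i+1}\ge\rho_i$, completing the square gives $\psi_{i+1}(z)\ge\phi_i+\tfrac{\rho_i}{2}\norm{z-z_{i+1}}^2$. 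Using the fresh cut \eqref{eq:model-subgradf} gives $\psi_{i+1}(z)\ge f(z_{i+1})+\langle g_{i+1},z-z_{i+1}\rangle+\tfrac{\rho_i}{2}\norm{z-x}^2$. Recentering both minorants at $z_{i+1}$ via $u=z-z_{i+1}$ and writing $h:=g_{i+1}-s_{i+1}$, their pointwise maximum equals $\phi_i+\tfrac{\rho_i}{2}\norm{u}^2+\max\{0,\eta_i+\langle h,u\rangle\}$. Minimizing this scalar-plus-quadratic model in closed form yields $\phi_{i+1}-\phi_i\ge\tfrac{\rho_i\eta_i^2}{2\norm{h}^2}$ whenever $\norm{h}^2\ge\rho_i\eta_i$, and $\phi_{i+1}-\phi_i\ge\tfrac{\eta_i}{2}$ otherwise; the latter regime only contracts $g$ by a constant factor and thus can only shorten the block, so the former drives the count.

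With the per-step decrease in hand I would telescope. Since $g_i-g_{i+1}=\phi_{i+1}-\phi_i\ge\tfrac{\rho_{k+1}(1-\beta)^2}{8G_{k+1}^2}g_i^2$ (using $\eta_i\ge(1-\beta)g_i$, $\rho_i\ge\rho_{k+1}$, and $\norm{h}\le 2G_{k+1}$), dividing by $g_ig_{i+1}$ and using $g_{i+1}\le g_i$ gives $\tfrac{1}{g_{i+1}}\ge\tfrac{1}{g_i}+\tfrac{\rho_{k+1}(1-\beta)^2}{8G_{k+1}^2}$. Summing over the block and discarding $1/g_{k+1}\ge0$ yields $\tfrac{1}{g_{k+T}}\ge T\cdot\tfrac{\rho_{k+1}(1-\beta)^2}{8G_{k+1}^2}$; combined with $g_{k+T}\ge\Delta_{k+T}$ this rearranges to the claimed $T\le\tfrac{8G_{k+1}^2}{(1-\beta)^2\rho_{k+1}\Delta_{k+T}}$. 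The two specializations then follow by substitution: in the $M$-Lipschitz case every $g_{i+1}$ and every aggregate $s_{i+1}$ is the slope of an affine minorant of $f$, hence has norm at most $M$, which controls $\norm{h}$; in the $L$-smooth case I would instead bound $G_{k+1}^2$ against $\rho_{k+1}\Delta_{k+T}$ through the standard smoothness inequality, so that $\Delta_{k+T}$ cancels and only $L$ and $\rho_{k+1}$ survive.

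The main obstacle is the closed-form minimization of the maximum of the two quadratic minorants and the isolation of the factor $\eta_i^2/\norm{h}^2$: one must treat the two regimes of the piecewise objective correctly and confirm that the geometric regime never inflates the bound. A secondary, more routine difficulty is the bookkeeping of constants and of $\norm{h}=\norm{g_{i+1}-s_{i+1}}$---in particular the observation that an affine minorant of an $M$-Lipschitz convex function has slope at most $M$, giving $\norm{s_{i+1}}\le M$---together with the smoothness substitution that removes the $\Delta_{k+T}$ dependence in the $L$-smooth case.
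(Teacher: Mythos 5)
Your overall route is the same as the paper's: you track the model proximal gap (your $g_i$ is exactly the paper's $\widetilde\Delta_i$), extract $\eta_i \geq (1-\beta)g_i$ from the failed descent test, lower bound the next prox objective by the two cuts \eqref{eq:model-subgradf} and \eqref{eq:model-subgradfk}, minimize the resulting max-of-affine-plus-quadratic in closed form (this is the content of Claim~\ref{claim:closedForm}), and then solve the quadratic-decrease recurrence (your telescoping of $1/g_i$ is a fine substitute for Lemma~\ref{lem:recurrence}). Your Lipschitz specialization is also sound: the aggregate cut is an affine minorant of $f$ by \eqref{eq:model-subgradfk} and \eqref{eq:model-lowerBound}, and an affine minorant of an $M$-Lipschitz convex function has slope at most $M$, so $\|s_{i+1}\|\leq M$. (Neither your argument nor the paper's actually attains the constant $2M^2$ in the statement; both give $8M^2$, which is what the paper uses downstream, so that discrepancy is not yours to fix.)

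The genuine gap is the parenthetical ``$\|h\|\le 2G_{k+1}$'', which is never proved and is not innocuous. Here $s_{i+1}=\rho_i(x-z_{i+1})$ is a subgradient of the \emph{model} $f_i$, whereas $G_{k+1}$ bounds only the norms of the \emph{true} subgradients $g_{t+1}\in\partial f(z_{t+1})$; Assumption~\ref{ass:main} places no direct bound on model subgradients. The paper devotes the chain \eqref{eq:grad_bound} to exactly this point: $\frac{\rho_i}{2}\|z_{i+1}-x\|^2\leq \widetilde\Delta_i$ (strong convexity of the prox subproblem together with $f_i\leq f$), $\widetilde\Delta_i\leq\widetilde\Delta_{k+1}$ (monotonicity of the decrease), and $\widetilde\Delta_{k+1}\leq\|g_{k+1}\|^2/2\rho_{k+1}$ (the fresh cut \eqref{eq:model-subgradf} applied at the descent step, where $x_{k+1}=z_{k+1}$), which combine to give $\|s_{i+1}\|^2\leq 2\rho_i\widetilde\Delta_i\leq(\rho_i/\rho_{k+1})G_{k+1}^2$. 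Note this allows $\|s_{i+1}\|$ to exceed $G_{k+1}$ by the factor $\sqrt{\rho_i/\rho_{k+1}}$, which is real when the stepsizes increase within the block (permitted by \eqref{eq:parameter-change}); so your inequality $\|h\|\leq 2G_{k+1}$ can actually fail, and the final constant survives only because the factor $\rho_i$ in the numerator of $\rho_i\eta_i^2/2\|h\|^2$ cancels the factor $\rho_i/\rho_{k+1}$ in the bound on $\|h\|^2$. The same missing initial bound $g_{k+1}\leq G_{k+1}^2/2\rho_{k+1}$ is also what your other shortcuts secretly rely on: it is what makes the ``otherwise'' regime obey the same quadratic recurrence (since $g_i\leq g_{k+1}\leq G_{k+1}^2/2\rho_{k+1}$ implies $(1-\beta)g_i/2\geq \alpha g_i^2$ for your $\alpha$), rather than merely ``shortening the block''; it repairs the off-by-one in your telescoping (discarding $1/g_{k+1}\geq 0$ only yields $(T-1)\alpha$, whereas $1/g_{k+1}\geq 2\rho_{k+1}/G_{k+1}^2\geq\alpha$ gives $T\alpha$); and it is the substance of \eqref{eq:Gk-bound-eq2} in the smooth case you only sketch. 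So the missing ingredient is concentrated in one place: you need the paper's \eqref{eq:grad_bound}, and without it the headline bound $T\leq 8G_{k+1}^2/(1-\beta)^2\rho_{k+1}\Delta_{k+T}$ is not established.
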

\end{itemize}

With these two observations in hand, convergence guarantees for the bundle method follow from specifying any choice of the parameter $\rho_k$.
For fixed $\rho_k$, bounding the proximal gap is a classic, well-understand problem. Standard analysis~\cite[Lemma 7.12]{Ruszczynski2006} of the proximal gap shows the following bound for any minimizer $x^*$.\footnote{\cite[Lemma 7.12]{Ruszczynski2006} is missing a ``$1/2$'' in its statement, however it does appear in the proof.}
\begin{lemma} \label{lem:prox-gap-bound}
  Fix a minimizer $x^{*}$ of $f$ and let $x_k\in\RR^n \setminus \{x^{*}\}$. Then, proximal gap is lower bounded by
  \begin{equation} \label{eq:prox-gap-bound}
    \Delta_k \geq \begin{cases} \dfrac{1}{2\rho_k}\left(\dfrac{f(x_k)-f(x^*)}{\|x_k-x^*\|}\right)^2 & \text{ if } f(x_k)-f(x^*) \leq \rho_k\|x_k-x^*\|^2\\
      f(x_k)-f(x^*)-\dfrac{\rho_k}{2}\|x_k-x^*\|^2 & \text{ otherwise.} \end{cases}
  \end{equation}
\end{lemma}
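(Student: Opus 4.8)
The plan is to exploit the defining optimality of the proximal point $\bar{x}_{k+1}$: since it minimizes the map $x \mapsto f(x) + \frac{\rho_k}{2}\|x-x_k\|^2$, for \emph{any} comparison point $y \in \RR^d$ we immediately get
$$ \Delta_k = f(x_k) - \left(f(\bar{x}_{k+1}) + \tfrac{\rho_k}{2}\|\bar{x}_{k+1}-x_k\|^2\right) \geq f(x_k) - f(y) - \tfrac{\rho_k}{2}\|y-x_k\|^2 . $$
The entire argument then reduces to choosing $y$ well and maximizing this lower bound.

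First I would restrict attention to the segment joining $x_k$ to the fixed minimizer $x^*$, parametrizing $y = x_k + t(x^*-x_k)$ for $t \in [0,1]$. Convexity of $f$ gives $f(y) \leq (1-t)f(x_k) + t f(x^*)$, hence $f(x_k) - f(y) \geq t\,(f(x_k)-f(x^*))$, while $\|y-x_k\|^2 = t^2\|x_k-x^*\|^2$. Substituting into the bound above yields
$$ \Delta_k \geq t\,(f(x_k)-f(x^*)) - \tfrac{\rho_k}{2}\,t^2\|x_k-x^*\|^2 =: h(t), $$
a concave quadratic in $t$ which I would maximize over the interval $[0,1]$.

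The optimization splits into exactly two cases according to whether the unconstrained maximizer $t^\ast = \frac{f(x_k)-f(x^*)}{\rho_k\|x_k-x^*\|^2}$ lands inside $[0,1]$. When $f(x_k)-f(x^*) \leq \rho_k\|x_k-x^*\|^2$ we have $t^\ast \in (0,1]$, and evaluating $h(t^\ast)$ produces $\frac{1}{2\rho_k}\bigl(\frac{f(x_k)-f(x^*)}{\|x_k-x^*\|}\bigr)^2$, the first branch. Otherwise $t^\ast > 1$, so $h$ is increasing on $[0,1]$ and is maximized at the endpoint $t=1$ (equivalently, one may just take $y = x^*$ at the outset), giving $f(x_k)-f(x^*) - \frac{\rho_k}{2}\|x_k-x^*\|^2$, the second branch.

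There is no genuine obstacle here beyond bookkeeping. The only things to verify are that the convexity inequality is applied in the correct direction and that the case threshold $f(x_k)-f(x^*)$ versus $\rho_k\|x_k-x^*\|^2$ coincides precisely with the condition $t^\ast \leq 1$, which it does by construction; the two evaluations of $h$ then reproduce the two branches verbatim. I would also note that $x_k \neq x^*$ ensures the quantities are well defined and the division in the first branch is legitimate.
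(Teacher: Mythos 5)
Your proof is correct and is essentially the standard argument: the paper itself does not reprove this lemma but cites \cite[Lemma 7.12]{Ruszczynski2006}, and the proof there proceeds exactly as you do, by lower-bounding the proximal gap with trial points $y = x_k + t(x^*-x_k)$, using convexity, and optimizing the resulting concave quadratic in $t$ over $[0,1]$, with the two branches corresponding to whether the unconstrained maximizer lies inside the interval. Nothing is missing; your remark that $x_k \neq x^*$ keeps the first branch well defined is the only hypothesis that needs checking, and you checked it.
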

\noindent Our ideal stepsize~\eqref{eq:ideal-stepsize} is chosen to balance the two cases of this classic bound, ensuring $\Delta_k\geq \frac{1}{2}(f(x_k)-f(x^*))$. This lemma then gives insight into the effect of approximating this stepsize by some $\rho_k = \lambda (f(x_k)-f(x^*))/\|x_k-x^*\|^2$. Using an over estimate of the target stepsize (i.e., $\lambda >1$), one has $\Delta_k \geq \frac{1}{2\lambda}(f(x_k)-f(x^*))$, weakening the descent bound (Lemma~\ref{lem:descent-prox-gap}) by a factor of $1/\lambda$. Similarly, underestimating the target stepsize (i.e., $\lambda <1$), one maintains $\Delta_k\geq \frac{1}{2}(f(x_k)-f(x^*))$, but the smaller $\rho_k$ leads the null step bound (Lemma~\ref{lem:null-prox-gap}) to grow by $1/\lambda$. Similar reasoning holds for approximating the stepsizes~\eqref{eq:OPTgeneral-stepsize} and \eqref{eq:OPTholder-stepsize}.

All of our analysis 
follows directly from applying these core lemmas. We bound the number of descent steps by combining Lemmas~\ref{lem:descent-prox-gap} and~\ref{lem:prox-gap-bound} to give a recurrence relation describing the decrease in the objective gap. Then Lemmas~\ref{lem:null-prox-gap} and~\ref{lem:prox-gap-bound} together allow us to bound the number of consecutive null steps between each of these descent steps, which can then be summed up to bound the total number of iterations required.

%
%
%


\subsection{Proof of the Descent Step Lemma~\ref{lem:descent-prox-gap}}
Let $\bar{x}_{k+1} = \argmin\{f(\cdot) + \frac{\rho_k}{2}\|\cdot-x_k\|^2\}$. From~\eqref{eq:model-lowerBound}, we have
\begin{align*}
  f_k(x_{k+1}) &\leq f_k(x_{k+1})+\frac{\rho_k}{2}\|x_{k+1}-x_{k}\|^2\\
               &\leq f_k(\bar{x}_{k+1})+\frac{\rho_k}{2}\|\bar{x}_{k+1}-x_{k}\|^2\\
               &\leq f(\bar{x}_{k+1})+\frac{\rho_k}{2}\|\bar{x}_{k+1}-x_{k}\|^2 \ .
\end{align*}
The second line follows by the definition of $x_{k+1}$ and the last line uses $f_{k} \leq f.$ Hence $f(x_k) - f_k(x_{k+1}) \geq \Delta_k$. Since we have assumed that iteration $k$ was a descent step, this implies $(f(x_k) - f(x_{k+1}))/\beta \geq \Delta_k$. Concluding the proof.

\subsection{Proof of the Null Step Lemma~\ref{lem:null-prox-gap}}
Consider some descent step, at iteration $k$, followed by $T$ consecutive null steps.
Denote the proximal subproblem gap at iteration $k< t\leq k+T$ on the model $f_t$ by
$$\widetilde\Delta_t := f(x_{k+1}) - \left(f_t(z_{t+1}) + \frac{\rho_{t}}{2}\|z_{t+1} - x_{k+1}\|^2\right).$$
Note every such null step $t$ has stepsize $\rho_t\geq \rho_{k+1}$ and the same proximal center $x_t=x_{k+1}$.
The core of this null step bound relies on the following recurrence showing $\widetilde\Delta_t$ decreases at each step
\begin{equation} \label{eq:null-recurrence}
  \widetilde\Delta_{t+1} \leq \widetilde\Delta_{t} -  \frac{(1-\beta)^2\rho_{k+1}\widetilde\Delta_t^2}{8G_{k+1}^2} \ .
\end{equation}
Before deriving this inequality, we show how it completes the proof of this lemma.
After $T$ consecutive null steps, the fact that $f_{k+T}\leq f$ ensures $\widetilde \Delta_{k+T} \geq \Delta_{k+T}$. Thus, to bound $T$ it suffices to bound the minimum iteration at which the reversed inequality hold. By solving the recurrence~\eqref{eq:null-recurrence}, see Lemma~\ref{lem:recurrence} in the appendix with $\epsilon = \Delta_{k+T}$, we conclude the number of consecutive null steps is at most
$$ T\leq \frac{8G_{k+1}^2}{(1-\beta)^2\rho_{k+1}\Delta_{k+T}} \ . $$

Now all that remains is to derive the recurrence~\eqref{eq:null-recurrence}. Consider some null step $k< t\leq k+T$ in the sequence of consecutive null steps. It suffices to assume $\rho_{t+1}=\rho_t$ as increasing the value of the proximal parameter can only further decrease the proximal gap.
Define the necessary lower bound on $f_{t+1}$ given by~\eqref{eq:model-subgradf} and~\eqref{eq:model-subgradfk} as
$$\tilde f_{t+1}(\cdot) := \max\left\{ f_{t}(z_{t+1}) + \langle s_{t+1}, \cdot-z_{t+1}\rangle,\ f(z_{t+1}) + \langle g_{t+1}, \cdot-z_{t+1}\rangle\right\} \leq f_{t+1}(\cdot)\ .$$
Denote the result of a proximal step on $\tilde f_{t+1}$ by $y_{t+2} = \argmin\left\{\tilde f_{t+1}(\cdot) + \frac{\rho_{t}}{2}\|\cdot-x_{k+1}\|^2\right\}\ .$
	\begin{claim}\label{claim:closedForm} The solution to the above optimization problem is:
		\begin{align}
		\theta_{t+1} &= \min\left\{1,\frac{\rho_{t}\left(f(z_{t+1})-f_t(z_{t+1})\right)}{\|g_{t+1}-s_{t+1}\|^2}\right\}\nonumber\\
		y_{t+2} &= x_{k+1} - \frac{1}{\rho_{t}}\left(\theta_{t+1}g_{t+1} +(1-\theta_{t+1})s_{t+1}\right).\label{eq:two-cut-solution}
		\end{align}
	\end{claim}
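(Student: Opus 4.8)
The plan is to solve the strongly convex subproblem $\min_z \tilde f_{t+1}(z) + \frac{\rho_t}{2}\|z-x_{k+1}\|^2$ in closed form by linearizing the maximum. Writing the two affine pieces as $\ell_1(z) = f_t(z_{t+1}) + \dotp{s_{t+1}, z-z_{t+1}}$ and $\ell_2(z) = f(z_{t+1}) + \dotp{g_{t+1}, z-z_{t+1}}$, so that $\tilde f_{t+1} = \max\{\ell_1,\ell_2\}$, I would use the elementary identity $\max\{\ell_1(z),\ell_2(z)\} = \max_{\theta\in[0,1]} \theta\ell_2(z) + (1-\theta)\ell_1(z)$. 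The resulting function $L(z,\theta) = \theta\ell_2(z) + (1-\theta)\ell_1(z) + \frac{\rho_t}{2}\|z-x_{k+1}\|^2$ is strongly convex in $z$ and affine (hence concave) in $\theta$ over the compact set $[0,1]$, so Sion's minimax theorem lets me exchange the min and max, reducing the problem to $\max_{\theta\in[0,1]}\min_z L(z,\theta)$.

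For fixed $\theta$, the inner minimization is an unconstrained quadratic: since $\theta\ell_2 + (1-\theta)\ell_1$ is affine with gradient $w(\theta) := \theta g_{t+1} + (1-\theta)s_{t+1}$, its minimizer is $z(\theta) = x_{k+1} - \frac{1}{\rho_t}w(\theta)$, which already has the claimed form~\eqref{eq:two-cut-solution}. Substituting this back gives a dual function $\phi(\theta) = [\theta\ell_2(x_{k+1}) + (1-\theta)\ell_1(x_{k+1})] - \frac{1}{2\rho_t}\|w(\theta)\|^2$, a concave quadratic in $\theta$. Setting $\phi'(\theta)=0$ and solving produces an explicit unconstrained optimizer $\theta^*$; projecting onto $[0,1]$ then yields $\theta_{t+1}$.

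The crux is simplifying $\theta^*$ to the stated form. The key input is the first-order optimality condition defining $z_{t+1}$, namely $s_{t+1} = \rho_t(x_{k+1}-z_{t+1})$ (here $x_t = x_{k+1}$ since all null steps in the block share the same proximal center), equivalently $x_{k+1}-z_{t+1} = \frac{1}{\rho_t}s_{t+1}$. Evaluating $\ell_1(x_{k+1})$ and $\ell_2(x_{k+1})$ with this identity and expanding $\|w(\theta)\|^2$, the cross-terms $\dotp{s_{t+1}, g_{t+1}-s_{t+1}}$ cancel and $\theta^*$ collapses to $\frac{\rho_t(f(z_{t+1})-f_t(z_{t+1}))}{\|g_{t+1}-s_{t+1}\|^2}$. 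Two clean-up points remain. First, the minorant property~\eqref{eq:model-lowerBound} gives $f_t(z_{t+1}) \leq f(z_{t+1})$, so $\theta^*\geq 0$ and only the upper clamp at $1$ can be active — which is exactly why the stated formula is $\min\{1,\cdot\}$ with no lower truncation. Second, the degenerate case $g_{t+1}=s_{t+1}$ (vanishing denominator) forces $w(\theta)\equiv g_{t+1}$ and hence the same $y_{t+2} = x_{k+1} - \frac{1}{\rho_t}g_{t+1}$ for every $\theta$, so the conclusion still holds.

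The step I expect to be the main obstacle is the algebraic reduction of $\theta^*$: it is purely mechanical but demands care in substituting $x_{k+1}-z_{t+1}=\frac{1}{\rho_t}s_{t+1}$ and cancelling the inner-product terms so the messy expression becomes the clean gap $f(z_{t+1})-f_t(z_{t+1})$. An alternative that sidesteps Sion's theorem is to verify directly the subgradient optimality condition $0 \in \partial\tilde f_{t+1}(y_{t+2}) + \rho_t(y_{t+2}-x_{k+1})$ at the claimed point, splitting into the interior case $\theta_{t+1}\in(0,1)$ (where one checks the kink condition $\ell_1(y_{t+2})=\ell_2(y_{t+2})$) and the boundary case $\theta_{t+1}=1$ (where one checks $\ell_2(y_{t+2})\geq\ell_1(y_{t+2})$); strong convexity then guarantees the verified point is the unique minimizer.
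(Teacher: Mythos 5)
Your proposal is correct, but it takes a genuinely different route from the paper. The paper's proof is a guess-and-verify argument: it posits the point $y_{t+2}$, writes the optimality condition $y_{t+2} \in x_{k+1} - \frac{1}{\rho_t}\partial \tilde f_{t+1}(y_{t+2})$, and checks by case analysis that $w=\theta_{t+1}g_{t+1}+(1-\theta_{t+1})s_{t+1}$ lies in the subdifferential — when $\theta_{t+1}=1$ it shows $h_1(y_{t+2})<h_2(y_{t+2})$ so the subdifferential is $\{g_{t+1}\}$, and when $\theta_{t+1}$ equals the ratio it shows the two cuts are equal at $y_{t+2}$ so any convex combination of $s_{t+1},g_{t+1}$ is a subgradient; this is precisely the ``alternative'' you sketch in your final paragraph. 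Your primary argument is instead constructive: rewrite $\max\{\ell_1,\ell_2\}$ as a maximum over convex combinations, swap min and max via Sion's theorem (strong convexity in $z$, linearity in $\theta$, compactness of $[0,1]$), solve the inner quadratic to get $z(\theta)=x_{k+1}-\frac{1}{\rho_t}w(\theta)$, and maximize the concave quadratic dual $\phi(\theta)$ over $[0,1]$. I checked your key cancellation: using $s_{t+1}=\rho_t(x_{k+1}-z_{t+1})$ one gets $\phi'(\theta)=f(z_{t+1})-f_t(z_{t+1})-\frac{\theta}{\rho_t}\|g_{t+1}-s_{t+1}\|^2$, so the unconstrained maximizer is exactly the stated ratio, and the minorant property $f_t\le f$ makes it nonnegative, so only the upper clamp $\min\{1,\cdot\}$ is needed — all as you say. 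What your route buys is an explanation of where $\theta_{t+1}$ comes from (it is the optimal dual aggregation weight) rather than a bare verification; what it costs is one standard step you leave implicit: after exchanging min and max you must still identify the primal optimum with $z(\theta_{t+1})$, which follows from strong duality together with uniqueness of the minimizer of the strongly convex $L(\cdot,\theta_{t+1})$ (if $z^*$ is primal optimal, equality of the optimal values forces $L(z^*,\theta_{t+1})=\min_z L(z,\theta_{t+1})$, hence $z^*=z(\theta_{t+1})$). Spelling that out would make the argument airtight; your treatment of the degenerate case $g_{t+1}=s_{t+1}$, which the paper passes over silently, is also correct.
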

  This claim is a well-known fact, we include a brief proof in Appendix~\ref{app:proofClaim} for completion.

	Considering $y_{t+2}$, the objective of the proximal subproblem at iteration $t+1$ is lower bounded by
	\begin{align*}
	&f_{t+1}(z_{t+2}) + \frac{\rho_{t}}{2}\|z_{t+2} - x_{k+1}\|^2\\ &\geq \tilde f_{t+1}(y_{t+2}) + \frac{\rho_{t}}{2}\|y_{t+2} - x_{k+1}\|^2\\
	&\geq \theta_{t+1}\left(f(z_{t+1}) + \langle g_{t+1}, y_{t+2}-z_{t+1}\rangle\right)  \\
	& \hspace{1cm}+ (1-\theta_{t+1})\left(f_{t}(z_{t+1}) + \langle s_{t+1}, y_{t+2}-z_{t+1}\rangle\right) + \frac{\rho_{t}}{2}\|y_{t+2} - x_{k+1}\|^2\\
	&= f_{t}(z_{t+1}) + \theta_{t+1}\left(f(z_{t+1}) - f_{t}(z_{t+1})\right) \\
	& \hspace{1cm}+ \langle \theta_{t+1}g_{t+1} + (1-\theta_{t+1})s_{t+1}, y_{t+2}-z_{t+1}\rangle + \frac{\rho_{t}}{2}\|y_{t+2} - x_{k+1}\|^2\\
	&= f_{t}(z_{t+1}) + \theta_{t+1}\left(f(z_{t+1}) - f_{t}(z_{t+1})\right) \\
	& \hspace{1cm}- \theta_{t+1}^2\|g_{t+1}-s_{t+1}\|^2/2\rho_{t} + \frac{\rho_{t}}{2}\|z_{t+1} - x_{k+1}\|^2 \ ,
	\end{align*}
	where the first inequality uses that $f_{t+1}\geq \tilde f_{t+1}$, the second inequality takes a convex combination of the two affine functions defining $\tilde f_{t+1}$, and the second equality uses the definition of $y_{t+2}$ to complete the square (noting $y_{k+2}-z_{k+1}=\theta_{t+1}/\rho_{t}(s_{t+1}-g_{t+1})$).
	Thus we have
	$$ \widetilde\Delta_{t+1} \leq \widetilde\Delta_{t} - \theta_{t+1}\left(f(z_{t+1}) - f_{t}(z_{t+1})\right) + \theta_{t+1}^2\|g_{t+1}-s_{t+1}\|^2/2\rho_{t} \ .$$
	The amount of decrease guaranteed above can be lower bounded as follows
	\begin{align*}
	&\theta_{t+1}\left(f(z_{t+1}) - f_{t}(z_{t+1})\right) - \theta_{t+1}^2\|g_{t+1}-s_{t+1}\|^2/2\rho_{t}\\
	&\geq \frac{1}{2}\min\left\{f(z_{t+1}) - f_{t}(z_{t+1}),\ \frac{\rho_{t}(f(z_{t+1})-f_t(z_{t+1}))^2}{\|g_{t+1}-s_{t+1}\|^2}\right\}\\
	&\geq \frac{1}{2}\min\left\{(1-\beta)\widetilde\Delta_t,\ \frac{\rho_{t}(1-\beta)^2\widetilde\Delta_t^2}{\|g_{t+1}-s_{t+1}\|^2}\right\}\\
	&\geq \frac{1}{2}\min\left\{(1-\beta)\widetilde\Delta_t,\ \frac{\rho_{t}(1-\beta)^2\widetilde\Delta_t^2}{2\|g_{t+1}\|^2+2\|s_{t+1}\|^2}\right\}
	\end{align*}
	where the first inequality uses the definition of $\theta_{t+1}$, the second inequality uses the definition of a null step.
	Noting that both components of this minimum above are nonnegative, we conclude the weaker results that $\widetilde\Delta_{t+1}$ is nonincreasing. Utilizing the chain of inequalities $\|s_{t+1}\|^2 \leq 2 \rho_{t} \tilde \Delta_t \leq (\rho_t/\rho_{k+1})G_{k+1}^2$ (which we will show in a second) and that $\rho_t\geq\rho_{k+1}$, the decrease bound above is at least
	\begin{align*}
	&\geq \frac{1}{2}\min\left\{(1-\beta)\widetilde\Delta_t,\ \frac{\rho_{t}(1-\beta)^2\widetilde\Delta_t^2}{2\|g_{t+1}\|^2+2\|s_{t+1}\|^2}\right\}\\
	&\geq \frac{1}{2}\min\left\{2\frac{\rho_{k+1}(1-\beta) \widetilde\Delta_t^2}{G_{k+1}^2},\ \frac{\rho_{t}(1-\beta)^2\widetilde\Delta_t^2}{4G_{k+1}^2}\right\}\\
	&\geq  \frac{\rho_{k+1}(1-\beta)^2\widetilde\Delta_t^2}{8G_{k+1}^2} \ .
	\end{align*}
	This needed chain of inequalities holds as
	\begin{align}\begin{split}\label{eq:grad_bound}
	\frac{\rho_{t}}{2}\|z_{t+1} - x_{k+1} \|^2 &\leq f_t(x_{k+1}) - \left(f_t(z_{t+1}) + \frac{\rho_{t}}{2}\|z_{t+1} - x_{k+1}\|^2\right)\\
	&\leq \widetilde \Delta_t\\
	&\leq \widetilde \Delta_{k+1} \\
	&\leq \frac{1}{2\rho_{k+1}}\|g_{k+1}\|^2
	\end{split}
	\end{align}
	where the first inequality uses the  $\rho_{t}$-strongly convexity of the proximal subproblem $f_{t}(z)+\frac{\rho_{t}}{2}\|z-x_{k+1}\|^2$, the second uses the fact that $\tilde \Delta_{t}$ is nonincreasing, and the last inequality follows by~\eqref{eq:model-subgradf} applied at the descent step (i.e., $x_{k+1}=z_{k+1}$).

For any $M$-Lipschitz objective, our specialized result follows from observing that $G_k\leq M$ as subgradients everywhere are uniformly bounded in norm by the Lipschitz constant.
For any $L$-smooth objective, the following three inequalities hold for any null step $t$ in the sequence of consecutive null steps following a descent step $k<t$:
\begin{align}
  \|g_{t+1}\| & \leq \|g_{k+1}\| + L\|z_{t+1}-x_{k+1}\| \label{eq:Gk-bound-eq1}\\
  \|z_{t+1}-x_{k+1}\| & \leq \|g_{k+1}\|/\rho_{k+1} \label{eq:Gk-bound-eq2}\\
  \|g_{k+1}\| & \leq \sqrt{2(L+\rho_{k+1})\Delta_{k+1}} \ . \label{eq:Gk-bound-eq3}
\end{align}
Combined these three inequalities give the claimed bound as
\begin{align*}
  G_{k+1} = \sup_t\{\|g_{t+1}\|\}&\leq \sup_t\{\|g_{k+1}\| + L\|z_{t+1}-x_{k+1}\|\} \\
                                 &\leq (1+L/\rho_{k+1})\|g_{k+1}\|\\
                                 &\leq (1+L/\rho_{k+1})\sqrt{2(L+\rho_{k+1})\Delta_{k+1}}
\end{align*}
and thus $G_{k+1}^2 \leq 2(L+\rho_{k+1})^3\Delta_{k+1}/\rho_{k+1}^2$.
First~\eqref{eq:Gk-bound-eq1} follows directly from the gradient being $L$-Lipschitz continuous. Second~\eqref{eq:Gk-bound-eq2} follows from \eqref{eq:grad_bound}.
Third~\eqref{eq:Gk-bound-eq3} follows from the $L$-smoothness of $f$ and considering the full proximal subproblem $f(z)+\frac{\rho_{k+1}}{2}\|z-x_{k+1}\|^2$ since
\begin{align*}
  \Delta_{k+1} &= f(x_{k+1}) - \min_z\left\{ f(z) + \frac{\rho_{k+1}}{2}\|z-x_{k+1}\|^2 \right\} \\
               &\geq f(x_{k+1}) - \min_z \left\{ f(x_{k+1})+\langle g_{k+1}, z - x_{k+1}\rangle + \frac{L+\rho_{k+1}}{2}\|z-x_{k+1}\|^2\right\} \\
               &= \frac{\|g_{k+1}\|^2}{2(L+\rho_{k+1})} \ .
\end{align*}

\subsection{Proof of Theorem~\ref{thm:LipschitzGeneral}}\label{proof:LipschitzGeneral}

For a constant stepsize $\rho_k=\rho$, we can simplify the lower bound~\eqref{eq:prox-gap-bound} to only depend on $x_k$ through a simple threshold on $f(x_k)-f^*$ as
\begin{equation} \label{eq:prox-gap-bound-general}
  \Delta_k \geq \begin{cases} \dfrac{1}{2\rho}\left(\dfrac{f(x_k)-f^*}{D}\right)^2 & \text{ if } f(x_k)-f^* \leq \rho D^2\\
    \dfrac{1}{2}\left(f(x_k)-f^*\right) & \text{ otherwise. } \end{cases}
\end{equation}
Combining this with Lemma~\ref{lem:descent-prox-gap} gives a recurrence relation describing the decrease in the objective gap $\delta_k=f(x_k) - f^*$ on any descent step $k$ of
$$ \delta_{k+1} \leq
\begin{cases}
  \delta_k - \dfrac{\beta\delta_k^2}{2\rho D^2} & \text{ if } \delta_k \leq \rho D^2 \\
  (1-\beta/2)\delta_k & \text{ if } \delta_k > \rho D^2\ .
\end{cases}$$
Our analysis of the bundle method then proceeds by considering these two cases separately. In each case, solving the given recurrence relation bounds the number of descent steps and applying Lemma~\ref{lem:null-prox-gap} bounds the number of null steps.

\subsubsection{Bounding steps with $\delta_k > \rho D^2$.}
First we show that the number of descent steps with $\delta_k > \rho D^2$ is bounded by
\begin{equation}\label{eq:LipschitzGeneral-descent-case1}
  \left\lceil \frac{\log\left(\frac{f(x_0)-f^*}{\rho D^2}\right)}{-\log(1-\beta/2)} \right\rceil_+
\end{equation}
and the number of null steps with $\delta_k > \rho D^2$ is at most
\begin{equation}\label{eq:LipschitzGeneral-null-case1}
  \frac{32M^2}{\beta(1-\beta)^2\rho^2D^2} \ .
\end{equation}

In this case, our recurrence relation simplifies to have geometric decrease at each descent step $\delta_{k+1}\leq (1-\beta/2)\delta_k$. This immediately bounds the number of descent steps by~\eqref{eq:LipschitzGeneral-descent-case1}. Index the descent steps before a $\rho D^2$-minimizer is found by $k_1<\dots< k_n$ such that $x_{k_n+1}$ is the first iterate with objective value less than $\rho D^2$. Define $k_0=-1$. Then for each $i=0\dots n-1$,
$ f(x_{k_i+1}) - f^* \geq (1-\beta/2)^{i-(n-1)}\rho D^2 \ .$
It follows from~\eqref{eq:prox-gap-bound} that
$\Delta_{k_i+1} \geq (f(x_{k_i+1})-f^*)/2 \geq \left(1-\beta/2\right)^{i-(n-1)}\rho D^2/2.$
Plugging this into Lemma~\ref{lem:null-prox-gap} (note $\Delta_{k+T}=\Delta_{k+1}$ since the stepsize is held constant) upper bounds the number of consecutive null steps after the descent step $k_i$ by
$$ k_{i+1}-k_{i}-1 \leq \left(1-\beta/2\right)^{(n-1)-i}\frac{16M^2}{(1-\beta)^2\rho^2D^2} \ .$$
Summing this over $i=0\dots n-1$ bounds the total number of null steps before a $\rho D^2$-minimizer is found by~\eqref{eq:LipschitzGeneral-null-case1} as
$$ \sum_{i=0}^{n-1} \left(1-\beta/2\right)^{(n-1)-i}\frac{16M^2}{(1-\beta)^2\rho^2D^2} \leq \frac{32M^2}{\beta(1-\beta)^2\rho^2D^2} \ .$$

\subsubsection{Bounding steps with $\rho D^2 \geq \delta_k > \epsilon$.}
Now we complete our proof of Theorem~\ref{thm:LipschitzGeneral} by bounding the number of descent steps with $\rho D^2 \geq \delta_k > \epsilon$ by
\begin{equation}\label{eq:LipschitzGeneral-descent-case2}
  \frac{2\rho D^2}{\beta\epsilon}
\end{equation}
and the number of null steps with $\rho D^2 \geq \delta_k > \epsilon$ by
\begin{equation}\label{eq:LipschitzGeneral-null-case2}
  \frac{48\rho D^4M^2}{(1-\beta)^2\epsilon^3} \ .
\end{equation}

After the bundle method has passed objective value $\rho D^2$, the recurrence relation becomes
$$ \delta_{k+1} \leq \delta_k - \dfrac{\beta\delta_k^2}{2\rho D^2}. $$
Solving this recurrence with Lemma~\ref{lem:recurrence} implies $\delta_k > \epsilon$ holds for at most~\eqref{eq:LipschitzGeneral-descent-case2} descent steps.  Then we can bound the number of null steps between these descent steps by noting~\eqref{eq:prox-gap-bound-general} implies
$\Delta_k \geq (f(x_k)-f^*)^2/2\rho D^2 \geq \epsilon^2/2\rho D^2.$
Then Lemma~\ref{lem:null-prox-gap} (note $\Delta_{k+T}=\Delta_{k+1}$ since the stepsize is held constant) upper bounds the number of consecutive null steps by
$ 16D^2M^2/(1-\beta)^2\epsilon^2. $
Then multiplying this by our bound on the number of descent steps gives~\eqref{eq:LipschitzGeneral-null-case2} as
$$ \left(\frac{2\rho D^2}{\beta\epsilon} +1 \right) \frac{16D^2M^2}{(1-\beta)^2\epsilon^2}\leq \frac{48\rho D^4M^2}{\beta(1-\beta)^2\epsilon^3} \ . $$

\subsection{Proof of Theorem~\ref{thm:SmoothGeneral}}\label{proof:SmoothGeneral}
Our bound on the number of descent steps comes directly from Theorem~\ref{thm:LipschitzGeneral}.
Our claimed bound on the total number of null steps follows by multiplying this by the constant bound on the number of consecutive null steps from Lemma~\ref{lem:null-prox-gap}.

\subsection{Proof of Theorem~\ref{thm:LipschitzGrowth}} \label{proof:LipschitzGrowth}
Assuming H\"older growth~\eqref{eq:holder-growth} holds and fixing $\rho_k=\rho$, the lower bound~\eqref{eq:prox-gap-bound} simplifies to only depend on a simple threshold with $f(x_k)-f^*$ as
\begin{equation} \label{eq:prox-gap-bound-growth}
  \Delta_k \geq \begin{cases} \dfrac{\mu^{2/p}(f(x_k)-f^*)^{2-2/p}}{2\rho} & \text{ if } (f(x_k)-f^*)^{1-2/p} \leq \rho/\mu^{2/p}  \\
    \dfrac{1}{2}\left(f(x_k)-f^*\right) & \text{ otherwise .} \end{cases}
\end{equation}
From this, we arrive at a recurrence relation on the objective gap $\delta_k=f(x_k) - f^*$ decrease at each descent step $k$ by plugging this lower bound into Lemma~\ref{lem:descent-prox-gap} of
$$ \delta_{k+1} \leq
\begin{cases}
  \delta_k - \dfrac{\beta\mu^{2/p}\delta_k^{2-2/p}}{2\rho} & \text{ if } \delta_k^{1-2/p} \leq \rho/\mu^{2/p}\\
  (1-\beta/2)\delta_k & \text{ if } \delta_k^{1-2/p} > \rho/\mu^{2/p}\ .
\end{cases}$$
Our analysis proceeds by considering the two cases of this recurrence and the three cases of $p>2$, $p=2$, and $1\leq p <2$ separately. In each case, solving the given recurrence relation bounds the number of descent steps and applying Lemma~\ref{lem:null-prox-gap} bounds the number of null steps.

\subsubsection{Given $p>2$, bounding steps with  $\delta_k > (\rho/\mu^{2/p})^{1/(1-2/p)}$.}
First we show that the number of descent steps with $\delta_k > (\rho/\mu^{2/p})^{1/(1-2/p)}$ is bounded by
\begin{equation}\label{eq:LipschitzGrowth-descent-case1}
  \left\lceil \frac{\log\left(\frac{f(x_0)-f^*}{(\rho/\mu^{2/p})^{1/(1-2/p)}}\right)}{-\log(1-\beta/2)} \right\rceil_+
\end{equation}
and the number of null steps with $\delta_k > (\rho/\mu^{2/p})^{1/(1-2/p)}$ is at most
\begin{equation}\label{eq:LipschitzGrowth-null-case1}
  \frac{8M^2}{\beta(1-\beta)^2\rho(\rho/\mu^{2/p})^{1/(1-2/p)}} \ .
\end{equation}

In this case, our recurrence relation simplifies to have geometric decrease at each descent step $\delta_{k+1}\leq (1-\beta/2)\delta_k$. This immediately bounds the number of descent steps by~\eqref{eq:LipschitzGrowth-descent-case1}. Index the descent steps before a $(\rho/\mu^{2/p})^{1/(1-2/p)}$-minimizer is found by $k_1<\dots< k_n$ such that $x_{k_n+1}$ is the first iterate with objective value less than $(\rho/\mu^{2/p})^{1/(1-2/p)}$. Define $k_0=-1$. Then for each $i=0\dots n-1$,
$ f(x_{k_i+1}) - f^* \geq (1-\beta/2)^{i-(n-1)}(\rho/\mu^{2/p})^{1/(1-2/p)}.$
It follows from~\eqref{eq:prox-gap-bound} that
$$\Delta_{k_i+1} \geq (f(x_{k_i+1})-f^*)/2 \geq \left(1-\beta/2\right)^{i-(n-1)}(\rho/\mu^{2/p})^{1/(1-2/p)}/2.$$
Plugging this into Lemma~\ref{lem:null-prox-gap} (note $\Delta_{k+T}=\Delta_{k+1}$ since the stepsize is held constant) upper bounds the number of consecutive null steps after the descent step $k_i$ by
$$ k_{i+1}-k_{i}-1 \leq \left(1-\beta/2\right)^{(n-1)-i}\frac{16M^2}{(1-\beta)^2\rho(\rho/\mu^{2/p})^{1/(1-2/p)}} \ .$$
Summing this over $i=0\dots n-1$ bounds the total number of null steps before a $(\rho/\mu^{2/p})^{1/(1-2/p)}$-minimizer is found by~\eqref{eq:LipschitzGrowth-null-case1} as
$$ \sum_{i=0}^{n-1} \left(1-\beta/2\right)^{(n-1)-i}\frac{16M^2}{(1-\beta)^2\rho(\rho/\mu^{2/p})^{1/(1-2/p)}} \leq \frac{32M^2}{\beta(1-\beta)^2\rho(\rho/\mu^{2/p})^{1/(1-2/p)}} \ .$$

\subsubsection{Given $p>2$, bounding steps with  $(\rho/\mu^{2/p})^{1/(1-2/p)}\geq \delta_k >\epsilon$.}
Next we show that the total number of descent steps with$(\rho/\mu^{2/p})^{1/(1-2/p)}\geq \delta_k >\epsilon$ is bounded by
\begin{equation}\label{eq:LipschitzGrowth-descent-case2}
  \frac{2\rho}{(1-2/p)\beta\mu^{2/p}\epsilon^{1-2/p}}
\end{equation}
and the number of null steps with $(\rho/\mu^{2/p})^{1/(1-2/p)}\geq \delta_k >\epsilon$ is at most
\begin{equation}\label{eq:LipschitzGrowth-null-case2}
  \frac{48\rho M^2}{(1-2/p)\beta(1-\beta)^2\mu^{4/p}\epsilon^{3-4/p}} \ .
\end{equation}

In this case, the recurrence relation on objective value decrease becomes
$$ \delta_{k+1} \leq \delta_k - \dfrac{\beta\mu^{2/p}\delta_k^{2-2/p}}{2\rho}. $$
Applying Lemma~\ref{lem:recurrence} gives our bound on the number of descent steps with $\delta_k>\epsilon$ in~\eqref{eq:LipschitzGrowth-descent-case2}.
Plugging the lower bound $\Delta_k \geq \mu^{2/p}(f(x_k)-f^*)^{2-2/p}/2\rho \geq \mu^{2/p}\epsilon^{2-2/p}/2\rho$ into Lemma~\ref{lem:null-prox-gap}, the number of consecutive null steps after a descent step is at most
$$ \frac{16M^2}{(1-\beta)^2\mu^{2/p}\epsilon^{2-2/p}} \ . $$
Then multiplying our limit on consecutive null steps by the number of descent steps between finding a $(\rho/\mu^{2/p})^{1/(1-2/p)}$-minimizer and finding an $\epsilon$-minimizer gives the bound~\eqref{eq:LipschitzGrowth-null-case2} as
$$ \left(\frac{2\rho}{(1-2/p)\beta\mu^{2/p}\epsilon^{1-2/p}} + 1\right)\frac{16M^2}{(1-\beta)^2\mu^{2/p}\epsilon^{2-2/p}} \leq \frac{48\rho M^2}{(1-2/p)\beta(1-\beta)^2\mu^{4/p}\epsilon^{3-4/p}} \ .$$

\subsubsection{Given $p=2$, bounding steps with $\delta_k >\epsilon$.}
Here both cases of our recurrence relation have a similar form, and so we directly bound the total number of descent steps with $\delta_k > \epsilon$ by
\begin{equation}\label{eq:LipschitzGrowth-descent-case3}
  \left\lceil \frac{\log\left(\frac{f(x_0)-f^*}{\epsilon}\right)}{-\log(1-\beta\min\{\mu/2\rho, 1/2\})} \right\rceil
\end{equation}
and the number of null steps with $\delta_k > \epsilon$ by
\begin{equation}\label{eq:LipschitzGrowth-null-case3}
  \frac{8M^2}{\beta(1-\beta)^2\min\{\mu/2\rho, 1/2\}\rho\epsilon} \ .
\end{equation}

In this case, our recurrence relation simplifies to have geometric decrease at each descent step $\delta_{k+1}\leq (1-\beta\min\{\mu/2\rho, 1/2\})\delta_k$. This immediately bounds the number of descent steps by~\eqref{eq:LipschitzGrowth-descent-case3}.  Index the descent steps before an $\epsilon$-minimizer is found by $k_1<\dots< k_n$ such that $x_{k_n+1}$ is the first iterate with objective value less than $\epsilon$. Define $k_0=-1$. Then for each $i=0\dots n-1$,
$ f(x_{k_i+1}) - f^* \geq (1-\beta\min\{\mu/2\rho, 1/2\})^{i-(n-1)}\epsilon.$
It follows from~\eqref{eq:prox-gap-bound} that
$\Delta_{k_i+1} \geq \left(1-\beta\min\{\mu/2\rho, 1/2\}\right)^{i-(n-1)}\epsilon/2.$
Plugging this into Lemma~\ref{lem:null-prox-gap} (note $\Delta_{k+T}=\Delta_{k+1}$ since the stepsize is held constant) upper bounds the number of consecutive null steps after the descent step $k_i$ by
$$ k_{i+1}-k_{i}-1 \leq \left(1-\beta\min\{\mu/2\rho, 1/2\}\right)^{(n-1)-i}\frac{8M^2}{(1-\beta)^2\rho\epsilon} \ .$$
Summing this over $i=0\dots n-1$ bounds the total number of null steps before an $\epsilon$-minimizer is found by
$$\sum_{i=0}^{n-1} \left(1-\beta\min\{\mu/2\rho, 1/2\}\right)^{(n-1)-i}\frac{8M^2}{(1-\beta)^2\rho\epsilon}\leq \frac{8M^2}{\min\{\mu/2\rho, 1/2\}\beta(1-\beta)^2\rho\epsilon} \ .$$

\subsubsection{Given $1\leq p<2$, bounding steps with  $\delta_k > (\rho/\mu^{2/p})^{1/(1-2/p)}$.}
Now we show that the number of descent steps with $\delta_k > (\rho/\mu^{2/p})^{1/(1-2/p)}$ is bounded by
\begin{equation}\label{eq:LipschitzGrowth-descent-case4}
  \frac{2\rho(f(x_0)-f^*)^{2/p-1}}{(1-2^{1-2/p})\beta\mu^{2/p}}
\end{equation}
and the number of null steps with $\delta_k > (\rho/\mu^{2/p})^{1/(1-2/p)}$ is at most
\begin{equation} \label{eq:LipschitzGrowth-null-case4}
  \frac{32 M^2}{\beta(1-\beta)^2\rho(\rho/\mu^{2/p})^{1/(1-2/p)}}C
\end{equation}
with $C=\max\left\{\frac{(f(x_0)-f^*)^{4/p-3}}{(\rho/\mu^{2/p})^{(4/p-3)/(1-2/p)}}, 1\right\}\min\left\{\frac{1}{1-2^{-|4/p-3|}}, \left\lceil\log_2\left(\frac{f(x_0)-f^*}{(\rho/\mu^{2/p})^{1/(1-2/p)}}\right)\right\rceil\right\}$.
Notice that since $p<2$, the power $1-2/p$ of $\delta_k$ in the threshold condition of our recurrence is negative. Then, the recurrence relation on objective value decrease becomes
$ \delta_{k+1} \leq \delta_k - \dfrac{\beta\mu^{2/p}\delta_k^{2-2/p}}{2\rho}\ . $
For any $i\geq 0$, we first bound the number of descent and null steps with
$$ 2^{i+1}(\rho/\mu^{2/p})^{1/(1-2/p)}\geq \delta_k > 2^i(\rho/\mu^{2/p})^{1/(1-2/p)} \ .$$
Since descent steps decreases the gap by at least $\beta\mu^{2/p}\delta_k^{2-2/p}/2\rho$, there are at most
$$ \frac{2\rho(2^{i}(\rho/\mu^{2/p})^{1/(1-2/p)})^{2/p-1}}{\beta\mu^{2/p}} = \frac{2^{(2/p-1)i+1}}{\beta}$$
descent steps in this interval. Further, noting that in this interval
$$\Delta_{k} \geq \frac{\mu^{2/p}(2^i(\rho/\mu^{2/p})^{1/(1-2/p)})^{2-2/p}}{2\rho} = 2^{(2-2/p)i-1}(\rho/\mu^{2/p})^{1/(1-2/p)} \ ,$$
we can bound the number of consecutive null steps following any of these descent steps via Lemma~\ref{lem:null-prox-gap}. Hence there are at most
$$ \frac{2^{(4/p-3)i+5}M^2}{\beta(1-\beta)^2\rho(\rho/\mu^{2/p})^{1/(1-2/p)}} $$
null steps in this interval.

The bundle method halves its objective value at most $N=\lceil\log_2((f(x_0)-f^*)/(\rho/\mu^{2/p})^{1/(1-2/p)})\rceil$ times before an $(\rho/\mu^{2/p})^{1/(1-2/p)}$-minimizer is found. Then summing up these bounds on the descent and null steps in each interval limits the number of descent steps needed to find a $(\rho/\mu^{2/p})^{1/(1-2/p)}$-minimizer by~\eqref{eq:LipschitzGrowth-descent-case4} as
\begin{align*}
  \sum_{i=0}^{N-1} \frac{2^{(2/p-1)i+1}}{\beta} \leq \frac{2}{\beta}\sum_{i=0}^{N-1}2^{(2/p-1)i}
  \leq \frac{2^{(2/p-1)(N-1)+1}}{(1-2^{1-2/p})\beta}
  \leq \frac{2\rho(f(x_0)-f^*)^{2/p-1}}{(1-2^{1-2/p})\beta\mu^{2/p}}
\end{align*}
and similarly, the number of null steps needed by~\eqref{eq:LipschitzGrowth-null-case4} as
\begin{align*}
  &\sum_{i=0}^{N-1} \frac{2^{(4/p-3)i+5}M^2}{\beta(1-\beta)^2\rho(\rho/\mu^{2/p})^{1/(1-2/p)}}\\
  &\leq \frac{32 M^2}{\beta(1-\beta)^2\rho(\rho/\mu^{2/p})^{1/(1-2/p)}}\sum_{i=0}^{N-1}2^{(4/p-3)i}\\
  &\leq \frac{32 M^2}{\beta(1-\beta)^2\rho(\rho/\mu^{2/p})^{1/(1-2/p)}}\\
  & \hspace{2cm}\max\left\{\frac{(f(x_0)-f^*)^{4/p-3}}{(\rho/\mu^{2/p})^{(4/p-3)/(1-2/p)}}, 1\right\} \min\left\{\frac{1}{1-2^{-|4/p-3|}}, N\right\}
\end{align*}
where the last inequality bounds the sum regardless of the sign of the exponent $4/p-3$.

\subsubsection{Given $1\leq p<2$, bounding steps with  $(\rho/\mu^{2/p})^{1/(1-2/p)}\geq \delta_k >\epsilon$.}
Finally, we show that the number of descent steps with $(\rho/\mu^{2/p})^{1/(1-2/p)}\geq \delta_k >\epsilon$ is bounded by
\begin{equation}\label{eq:LipschitzGrowth-descent-case5}
  \left\lceil \frac{\log\left(\frac{(\rho/\mu^{2/p})^{1/(1-2/p)}}{\epsilon}\right)}{-\log(1-\beta/2)} \right\rceil
\end{equation}
and the number of null steps with $(\rho/\mu^{2/p})^{1/(1-2/p)}\geq \delta_k >\epsilon$ is at most
\begin{equation}\label{eq:LipschitzGrowth-null-case5}
  \frac{16M^2}{\beta(1-\beta)^2\rho\epsilon} \ .
\end{equation}

In this case, our recurrence relation simplifies to have geometric decrease at each descent step $\delta_{k+1}\leq (1-\beta/2)\delta_k$. This immediately bounds the number of descent steps by~\eqref{eq:LipschitzGrowth-descent-case5}. Index the descent steps after a $(\rho/\mu^{2/p})^{1/(1-2/p)}$-minimizer but before an $\epsilon$-minimizer is found by $k_1<\dots< k_n$ such that $x_{k_n+1}$ is the first iterate with objective value less than $\epsilon$. Then for each $i=0\dots n-1$,
$ f(x_{k_i+1}) - f^* \geq (1-\beta/2)^{i-(n-1)}\epsilon.$
It follows from~\eqref{eq:prox-gap-bound} that
$\Delta_{k_i+1} \geq (f(x_{k_i+1})-f^*)/2 \geq \left(1-\beta/2\right)^{i-(n-1)}\epsilon/2.$
Plugging this into Lemma~\ref{lem:null-prox-gap} (note $\Delta_{k+T}=\Delta_{k+1}$ since the stepsize is held constant) upper bounds the number of consecutive null steps after the descent step $k_i$ by
$$ k_{i+1}-k_{i}-1 \leq \left(1-\beta/2\right)^{(n-1)-i}\frac{8M^2}{(1-\beta)^2\rho\epsilon} \ .$$
Summing this over $i=0\dots n-1$ bounds the additional number of null steps before an $\epsilon$-minimizer is found by~\eqref{eq:LipschitzGrowth-null-case5} as
$$ \sum_{i=0}^{n-1} \left(1-\beta/2\right)^{(n-1)-i}\frac{8M^2}{(1-\beta)^2\rho\epsilon} \leq \frac{16M^2}{\beta(1-\beta)^2\rho\epsilon} \ .$$

\subsection{Proof of Theorem~\ref{thm:SmoothGrowth}}\label{proof:SmoothGrowth}
Our bound on the number of descent steps comes directly from Theorem~\ref{thm:LipschitzGrowth}.
Our claimed bound on the total number of null steps follows by multiplying this by the constant bound on the number of consecutive null steps from Lemma~\ref{lem:null-prox-gap}.

\if\siam1
\else
\subsection{Proof of Theorem~\ref{thm:OPT-LipschitzGeneral}} \label{proof:OPT-LipschitzGeneral}
Combining the lower bound $\Delta_k \geq \frac{1}{2}(f(x_k)-f^*)$ with Lemma~\ref{lem:descent-prox-gap} shows linear decrease in the objective every descent step
$$ f(x_{k+1}) - f^* \leq \left(1 - \frac{\beta}{2}\right)(f(x_{k})-f^*).$$
Our bound on the number of descent steps follows immediately from this.
Combining the lower bound $\Delta_k \geq \frac{1}{2}(f(x_k)-f^*)$ with Lemma~\ref{lem:null-prox-gap} shows that at most
$$\frac{8M^2D^2}{(1-\beta)^2(f(x_{k+1})-f^*)^2}$$
null steps occur between each descent step.
Denote the sequence of descent steps taken by the bundle method by $k_1,k_2,k_3\dots$ and as a base case define $k_{0}=-1$. Let $k_n$ be the first descent step finding an $\epsilon$-minimizer, which must have $n\leq \lceil\log_{(1-\beta/2)}(\frac{\epsilon}{f(x_0)-f^*})\rceil_+$.
From our linear decrease condition, we know for any $i=0,1,2,3,\dots n-1$
$$ f(x_{k_i+1}) - f^* \geq \left(1 - \beta/2\right)^{i-(n-1)}\epsilon$$
and from our null step bound, we know for any $i=0,1,2,\dots n-1 $
$$ k_{i+1}-k_{i}-1 \leq \frac{8M^2D^2}{(1-\beta)^2(f(x_{k_i+1})-f^*)^2} \leq (1-\beta/2)^{2(i-(n-1))}\frac{8M^2D^2}{(1-\beta)^2\epsilon^2} \ .$$
Then summing up our null step bounds ensures
$$ k_n - n \leq \sum^n_{i=1} (1-\beta/2)^{2(i-1-(n-1))}\frac{8M^2D^2}{(1-\beta)^2\epsilon^2}.$$
Bounding this geometric series shows us that the bundle method finds an $\epsilon$-minimizer with the number of null steps bounded by
$$ \left(\frac{1}{1-(1-\beta/2)^2}\right)\frac{8M^2D^2}{(1-\beta)^2\epsilon^2} \ . $$

\subsection{Proof of Theorem~\ref{thm:OPT-LipschitzGrowth}}\label{proof:OPT-LipschitzGrowth}
Our bound on the number of descent steps follows from Theorem~\ref{thm:OPT-LipschitzGeneral}. Our proof of the null step bound follows the same approach as Theorem~\ref{thm:OPT-LipschitzGeneral} with only minor differences.
Applying Lemma~\ref{lem:null-prox-gap} with our stepsize choice~\eqref{eq:OPTholder-stepsize} bounds the number of consecutive null steps after some descent step $k$ by
$$\frac{8M^2}{(1-\beta)^2\mu^{2/p}(f(x_{k+1})-f^*)^{2-2/p}} \ .$$
Denote the descent steps $-1=k_0 < k_1 < k_2 < \dots$ and suppose the $x_{k_n+1}$ is the first $\epsilon$-minimizer. Then
$$ k_{i+1}-k_{i}-1 \leq (1-\beta/2)^{(2-2/p)(i-(n-1))}\frac{8M^2}{(1-\beta)^2\mu^{2/p}\epsilon^{2-2/p}}$$
since $f(x_{k_i+1}) - f^* \geq \left(1 - \frac{\beta}{2}\right)^{i-(n-1)}\epsilon$.
Summing this up gives
$$ k_n -n \leq \sum^n_{i=1} (1-\beta/2)^{(2-2/p)(i-1-(n-1))}\frac{8M^2}{(1-\beta)^2\mu^{2/p}\epsilon^{2-2/p}} \ .$$

When $p>1$, this geometric series shows us that the bundle method finds an $\epsilon$-minimizer with the number of null steps bounded by
$$ \left(\frac{1}{1-(1-\beta/2)^{2-2/p}}\right)\frac{8M^2}{(1-\beta)^2\mu^{2/p}\epsilon^{2-2/p}}.$$

When $p=1$, we have a constant upper bound on the number of null steps following a descent step. Hence the number of null steps is bounded by
$$\frac{8M^2}{(1-\beta)^2\mu^2}\left\lceil\frac{\log\left(\frac{f(x_0)-f^*}{\epsilon}\right)}{-\log(1-\beta/2)}\right\rceil \ . $$

\fi

\subsection{Proof of Theorem~\ref{thm:OPT-ParallelRate}} \label{proof:parallel}

Let $\delta_k = \min_{j\in\{0,\dots, J-1\}}\{f(x^{(j)}_{k})-f^*\}$ denote the lowest objective gap among all of our $J$ instances of the bundle method after they have taken $k$ synchronous steps. Then the core of our convergence proof is bounding the number of iterations where this lowest objective gap is in the interval
$$ (1-\beta/2)^{-n}\epsilon \leq \delta_k \leq (1-\beta/2)^{-(n+1)}\epsilon \ . $$
for any integer $0\leq n < N:=\left\lceil \frac{\log((f(x_0)-f^*)/\epsilon)}{-\log(1-\beta/2)}\right\rceil$.
Within this interval, we focus on the instance
$$ j = \left\lceil \log_2\left(\frac{\mu^{2/p}((1-\beta/2)^{-n}\epsilon)^{1-2/p}}{4\bar{\rho}}\right)\right\rceil.$$
This instance of the bundle method's constant stepsize $\rho^{(j)}=2^j\bar{\rho}$ approximates the stepsize~\eqref{eq:OPTholder-stepsize} as
$$ \frac{1}{4}\mu^{2/p}((1-\beta/2)^{-n}\epsilon)^{1-2/p} \leq \rho^{(j)} \leq \frac{1}{2}\mu^{2/p}((1-\beta/2)^{-n}\epsilon)^{1-2/p} \ . $$
Then~\eqref{eq:prox-gap-bound-growth} bounds this method's proximal gap before an $(1-\beta/2)^{-n}\epsilon$-minimizer is found by
$$ \Delta^{(j)}_k \geq \frac{1}{2}(f(x^{(j)}_k)-f^*) \geq (1-\beta/2)^{-n}\epsilon/2 \ .$$

Letting $\delta^{(j)}_k = f(x^{(j)}_k)-f^*$, each descent step $k$ improves method $j$'s objective gap according to the recurrence
$ \delta^{(j)}_{k+1} \leq \min\{(1-\beta/2)\delta^{(j)}_k, \delta_k \} $
where the first term in the minimum comes from Lemma~\ref{lem:descent-prox-gap} and the second term comes from method $j$ taking any further improvement from the other bundle methods. By assumption, we have $\delta_k\leq (1-\beta/2)^{-(n+1)}\epsilon$, and so after one descent step $k'>k$ we must have
$ \delta^{(j)}_{k'+1} \leq (1-\beta/2)^{-(n+1)}\epsilon $. Thus after a second descent step $k''>k'$, our intermediate target accuracy is met as $\delta_{k''+1} \leq \delta^{(j)}_{k''+1} \leq (1-\beta/2)^{-n}\epsilon$.

Applying Lemma~\ref{lem:null-prox-gap} bounds the number of null steps between descent steps by
$$ \frac{8M^2}{(1-\beta)^2\rho^{(j)}\Delta^{(j)}_{k+1}} \leq \frac{64M^2}{(1-\beta)^2\mu^{2/p}((1-\beta/2)^{-n}\epsilon)^{2-2/p}} \ .$$
Hence the total number of steps before $\delta^{(j)}_k < 2^n\epsilon$ (and consequently $\delta_k < 2^n\epsilon$) is at most
$$ 2\left(\frac{64M^2}{(1-\beta)^2\mu^{2/p}((1-\beta/2)^{-n}\epsilon)^{2-2/p}} +1\right)\ . $$
Summing over this bound completes our proof. When $p>1$, this gives
\begin{align*}
  &\sum_{n=0}^{N-1} 2\left(\frac{64M^2}{(1-\beta)^2\mu^{2/p}((1-\beta/2)^{-n}\epsilon)^{2-2/p}} +1\right)\\
  &= 2\sum_{n=0}^{N-1}\frac{64M^2}{(1-\beta)^2\mu^{2/p}((1-\beta/2)^{-n}\epsilon)^{2-2/p}} + 2\left\lceil \frac{\log((f(x_0)-f^*)/\epsilon)}{-\log(1-\beta/2)}\right\rceil\\
  &\leq \left(\frac{2}{1-(1-\beta/2)^{2-2/p}}\right)\frac{64M^2}{(1-\beta)^2\mu^{2/p}\epsilon^{2-2/p}} + 2\left\lceil \frac{\log((f(x_0)-f^*)/\epsilon)}{-\log(1-\beta/2)}\right\rceil \ .
\end{align*}

When $p=1$, the number of steps in each of our intervals is constant. Consequently, the total number of iterations before an $\epsilon$ minimizer is found is at most
$$ \sum_{n=0}^{N-1} 2\left(\frac{64M^2}{(1-\beta)^2\mu^{2}} +1\right) = 2\left(\frac{64M^2}{(1-\beta)^2\mu^{2}} +1\right)\left\lceil \frac{\log((f(x_0)-f^*)/\epsilon)}{-\log(1-\beta/2)}\right\rceil\ . $$


\section*{Acknowledgements} We would like to thank Haihao Lu for pointing out a number of typos in an early version of this manuscript. We would also like to thank the anonymous reviewers for their insightful comments and feedback. Finally, we thank Adrian Lewis for introducing us to bundle methods.

{\small
\bibliographystyle{siamplain}
\bibliography{bibliography}
}
\appendix
\section{Solutions to Recurrence Relations}

Throughout our analysis, we frequently encounter recurrence relations of the form
$ \delta_{k+1} \leq \delta_{k} - \alpha \delta_k^q $ for some $\alpha>0$ and $q>1$.
The following lemma bounds the number of steps of such a recurrence to reach a desired level of accuracy $\delta_k\leq \epsilon$.
\begin{lemma}\label{lem:recurrence}
	Assume that a sequence $\{\delta_{k}\}_{k=1}^{\infty},$ satisfies the recurrence $\delta_{k+1} \leq \delta_{k} - \alpha \delta_k^q$. Then for any $\epsilon > 0$, the inequality $\delta_k\leq \epsilon$ holds for some
	\begin{equation*}
		k\leq \left\lceil\dfrac{1}{(q-1)\alpha\epsilon^{q-1}}\right\rceil \ .
	\end{equation*}
\end{lemma}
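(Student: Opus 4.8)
The plan is to linearize the recurrence through the substitution $h(\delta) = \delta^{1-q}$, which converts the nonlinear decrease $\delta_{k+1} \le \delta_k - \alpha\delta_k^q$ into a \emph{uniform additive increase} of $h$. This is the discrete analogue of integrating the ODE $\dot\delta = -\alpha\delta^q$, whose trajectory starting from any positive value crosses level $\epsilon$ by time $\tfrac{1}{(q-1)\alpha\epsilon^{q-1}}$; the discrete argument simply replaces the integral by a telescoping sum.

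First I would record two elementary facts. Since $q>1$ and $\alpha>0$, whenever $\delta_k > 0$ the recurrence forces $\delta_{k+1} < \delta_k$, so the sequence is strictly decreasing while it stays positive and the set $\{k : \delta_k > \epsilon\}$ is an initial segment of $\NN$; bounding its cardinality therefore bounds the first index at which $\delta_k \le \epsilon$. Moreover, for every step with $\delta_k > \epsilon$ we have $\delta_k,\delta_{k+1} > 0$, so $h$ is well defined on $(0,\infty)$, where it is decreasing and convex (indeed $h''(\delta) = q(q-1)\delta^{-q-1} > 0$).

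The crucial step is the per-step increment bound $h(\delta_{k+1}) - h(\delta_k) \ge (q-1)\alpha$. I would obtain it from convexity: the supporting-line inequality gives $h(\delta_{k+1}) - h(\delta_k) \ge h'(\delta_k)(\delta_{k+1}-\delta_k) = (1-q)\delta_k^{-q}(\delta_{k+1}-\delta_k)$, and substituting $\delta_{k+1}-\delta_k \le -\alpha\delta_k^q$ (so that the product of the two negative factors grows) collapses the right-hand side to exactly $(q-1)\alpha$. A direct Bernoulli-type estimate on $(1-\alpha\delta_k^{q-1})^{1-q}$ yields the same bound. Telescoping over the first $k-1$ steps then gives $h(\delta_k) \ge h(\delta_1) + (k-1)(q-1)\alpha \ge (k-1)(q-1)\alpha$. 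If $\delta_k$ were still larger than $\epsilon$, then since $h$ is decreasing we would have $h(\delta_k) < h(\epsilon) = \epsilon^{1-q}$; combining the two inequalities forces $(k-1)(q-1)\alpha < \epsilon^{1-q}$, i.e. $k-1 < \tfrac{1}{(q-1)\alpha\epsilon^{q-1}}$. Hence $\delta_k \le \epsilon$ must hold once $k$ exceeds this threshold, which after rounding up is the claimed count.

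The one real subtlety — and the step I would be most careful about — is the uniform increment bound, because $q-1$ may be either smaller or larger than $1$, so one cannot rely on the usual $(1-x)^r \le 1 - rx$ estimate in a single regime; the convexity argument sidesteps this entirely by being agnostic to the size of $q-1$. I would also watch the boundary bookkeeping: if some $\delta_{k+1}$ overshoots to $0$ or below, then the target $\delta_k \le \epsilon$ has already been reached at the previous comparison, so the convexity estimate is only ever invoked on steps with both endpoints strictly positive, and no case is lost.
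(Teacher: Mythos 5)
Your route --- linearizing via $h(\delta)=\delta^{1-q}$ and telescoping a uniform additive increment --- is genuinely different from the paper's proof, which instead establishes the pointwise envelope $\delta_k \le \bigl(\tfrac{1}{(q-1)\alpha k}\bigr)^{1/(q-1)}$ by induction, using $\delta_1 \le \max_\delta\{\delta-\alpha\delta^q\} \le (1/(q\alpha))^{1/(q-1)}$ as the base case and a weighted AM--GM inequality for the inductive step. Your per-step bound $h(\delta_{k+1})-h(\delta_k)\ge (q-1)\alpha$ via the supporting-line inequality for the convex function $h$ is correct, and it is indeed uniform in $q>1$; as a technique this is the cleaner and more standard argument.

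However, as executed your proof falls one step short of the stated constant, and this is a genuine (if small) gap. After discarding $h(\delta_1)\ge 0$, the telescope only gives $h(\delta_k) \ge (k-1)(q-1)\alpha$, so ``$\delta_k>\epsilon$'' yields $k-1 < B := \tfrac{1}{(q-1)\alpha\epsilon^{q-1}}$, and the first index at which this guarantees $\delta_k\le\epsilon$ is $k=\lceil B\rceil+1$, not $\lceil B\rceil$: assuming $\delta_k>\epsilon$ for all $k\le\lceil B\rceil$ only produces $\lceil B\rceil - 1 < B$, which is true and hence no contradiction. The missing ingredient is a nontrivial bound on the starting value, which is exactly what the paper's base case supplies: applying the recurrence once at the start (the paper writes $\delta_1\le\delta_0-\alpha\delta_0^q\le\max_\delta\{\delta-\alpha\delta^q\}\le(1/(q\alpha))^{1/(q-1)}$, so its proof presumes the recurrence is available one step before the first counted index) says, in your language, $h(\delta_1)\ge q\alpha > (q-1)\alpha$. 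With that, your telescope gives $h(\delta_k)\ge q\alpha+(k-1)(q-1)\alpha > k(q-1)\alpha$, and taking $k=\lceil B\rceil\ge B$ yields $h(\delta_k)>\epsilon^{1-q}$, contradicting $\delta_k>\epsilon$; equivalently, read the recurrence as holding from the initial point and telescope from $\delta_0$, getting $h(\delta_k)\ge h(\delta_0)+k(q-1)\alpha>k(q-1)\alpha$. Either one-line patch recovers the claimed bound; without it, you have proved the lemma with $\lceil B\rceil$ replaced by $\lceil B\rceil+1$, which is harmless for the paper's applications but is not the stated inequality.
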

\begin{proof}
It suffices to show the following upper bound on $\delta_k$ as a function of $k$
$$ \delta_k \leq \left(\frac{1}{(q-1)\alpha k}\right)^{1/(q-1)} \ .$$
First we show this bound holds with $k=1$. This follows as
\begin{align*}
	\delta_1 \leq \delta_0-\alpha\delta_0^q \leq \max_{\delta\in\RR}\{\delta-\alpha\delta^q\} \leq \left(\frac{1}{q\alpha}\right)^{1/(q-1)} \ .
\end{align*}
Then we complete our proof by induction using the following {\it weighted arithmetic-geometric mean (AM-GM) inequality}, which ensures for any $a,\alpha,b,\beta>0$ we have
$ a^\alpha b^\beta\leq \left(\frac{\alpha a + \beta b}{\alpha+\beta}\right)^{\alpha+\beta} \ .$
This implies that for any $k\geq 1$,  $(k-(q-1)^{-1})(k+1)^{1/(q-1)} \leq k^{q/(q-1)}$ by taking $a=k-(q-1)^{-1}$, $\alpha=1$, $b=k+1$, $\beta=1/(q-1)$. By expanding the recurrence at $k+1$ and applying this inequality we get
\begin{align*}
	\delta_{k+1} \leq \delta_k - \alpha\delta_k^q &\leq \left(\frac{1}{(q-1)\alpha k}\right)^{1/(q-1)} -\alpha\left(\frac{1}{(q-1)\alpha k}\right)^{q/(q-1)} \\
	&= \left(\frac{1}{(q-1)\alpha}\right)^{1/(q-1)} \left(\frac{k}{k^{q/(q-1)}} - \frac{1}{(q-1)k^{q/(q-1)}}\right)\\
	&= \left(\frac{1}{(q-1)\alpha}\right)^{1/(q-1)}\frac{k-(q-1)^{-1}}{k^{q/(q-1)}}\\
                                                      &\leq \left(\frac{1}{(q-1)\alpha(k+1)}\right)^{1/(q-1)} \ .
\end{align*}
Proving the result.
\end{proof}

\section{Proof of Claim~\ref{claim:closedForm}} \label{app:proofClaim} For notational convenience define $h_{1}(\cdot) := f_{t}(z_{t+1}) + \langle s_{t+1}, \cdot-z_{t+1}\rangle$ and $h_{2}(\cdot) :=  f(z_{t+1}) + \langle g_{t+1}, \cdot-z_{t+1}\rangle$ and so $\tilde f_{t+1} = \max\{h_{1}, h_{2}\}.$ Since the problem is strongly convex, there is unique solution. The solution is characterized by
		\begin{equation} \label{eq:opt}  y_{t+2} \in x_{k+1} - \frac{1}{\rho_{t}}\partial \tilde f_{t+1}(y_{t+2})
		\end{equation}
		where subdifferential is
		$$\partial \tilde f_{t+1}(y_{t+2}) =
		\begin{cases} \{s_{t+1}\} & \text{if } h_{1}(y_{t+2}) >h_{2}(y_{t+2}),\\
		\{g_{t+1}\} & \text{if }  h_{2}(y_{t+2}) > h_{1}(y_{t+2}),\\
		\conv(\{s_{t+1}, g_{t+1}\})& \text{otherwise,}
		\end{cases}$$
		and $\conv$ denotes the convex hull.

		Define $w = \theta_{t+1}g_{t+1} +(1-\theta_{t+1})s_{t+1}$.  In light of \eqref{eq:opt}, the claim would follow if we had $w \in \partial \tilde f_{t+1}(y_{t+2})$. Let's prove that this is the case. Observe that the coefficient $\theta_{t+1} \in [0, 1]$ because $f \geq f_{t}.$ We consider two cases.

		Case 1. Assume $\theta_{t+1}  = 1 < \frac{\rho_{t}\left(f(z_{t+1})-f_t(z_{t+1})\right)}{\|g_{t+1}-s_{t+1}\|^2},$ then $w = g_{t+1}$. Moreover,
		\begin{align*}  h_{1}(y_{k+2})-h_{2}(y_{k+2}) &= f_{t}(z_{t+1}) - f(z_{t+1}) + \langle s_{t+1} - g_{t+1}, y_{t+2}-z_{t+1}\rangle\\
		& < -\frac{1}{\rho_{t}}\|g_{t+1}-  s_{t+1}\|^{2} + \langle s_{t+1} - g_{t+1}, x_{k+1} -z_{t+1}  - \frac{1}{\rho_{t}} g_{t+1}\rangle \\
		& = -\frac{1}{\rho_{t}}\|g_{t+1}-  s_{t+1}\|^{2} + \frac{1}{\rho_{t}}\|g_{t+1}-  s_{t+1}\|^{2} = 0 ,
		\end{align*}
		where we used  $s_{t+1} = \rho_{t}(x_{k+1} - z_{t+1}).$ Thus $w \in \partial \tilde f_{t+1}(y_{t+2}).$

		Case 2. Assume $\theta_{t+1}  = \frac{\rho_{t}\left(f(z_{t+1})-f_t(z_{t+1})\right)}{\|g_{t+1}-s_{t+1}\|^2} \leq 1$. Then,
		\begin{align*}
		h_{1}(y_{k+2})-h_{2}(y_{k+2}) &= f_{t}(z_{t+1}) - f(z_{t+1}) + \left\langle s_{t+1} - g_{t+1}, x_{k+1} -z_{t+1}  - \frac{1}{\rho_{t}} w\right\rangle \\
		& =  f_{t}(z_{t+1}) - f(z_{t+1}) + \theta_{t+1} \| s_{t+1} - g_{t+1}\|^{2} = 0
		\end{align*}
		where the middle equality follows from the definition of $s_{t+1}$ and the last equality uses the definition of $\theta_{t+1}.$ We conclude that $w \in \partial \tilde f_{t+1}(y_{t+2}).$

\end{document}